\newcommand{\Bor}{\mathscr{B}}
\newcommand{\vBUC}{\big(\D{BUC}(\R^N)\big)^m}
\newcommand{\tvBUC}{\big(\D{BUC}(\clTcyl)\big)^m}
\newcommand{\vLip}{\big(\D{Lip}(\R^N)\big)^m}
\newcommand{\tvLip}{\big(\D{Lip}(\clTcyl)\big)^m}
\newcommand{\curves}{\D{C}\big([0,T];\R^N\big)}
\newcommand{\cyl}{(0,+\infty)\times\R^N}
\newcommand{\ccyl}{[0,+\infty)\times\R^N}
\newcommand{\clcyl}{\R_+\times\R^N}
\newcommand{\Tcyl}{(0,T)\times\R^N}
\newcommand{\clTcyl}{[0,T]\times\R^N}
\newcommand{\CC}{\mathcal C}
\newcommand{\I}{\mathcal I}
\newcommand{\ind}{\{1,\dots,m\}}
\newcommand{\EE}{\mathbb E}
\newcommand{\F}{\mathcal F}
\newcommand{\FF}{\mathbb F}
\newcommand{\boldg}{\mathbf{g}}
\newcommand{\R}{\mathbb R}
\newcommand{\N}{\mathbb N}
\newcommand{\PP}{\mathbb{P}}
\newcommand{\parts}{\mathscr{P}}
\newcommand{\Q}{\mathbb Q}
\newcommand{\restr}{\D{\Large $\llcorner$}}
\newcommand{\eps}{\varepsilon}
\newcommand{\uu}{\mathbf u}
\newcommand{\vv}{\mathbf v}
\newcommand{\ww}{\mathbf w}
\newcommand{\aaa}{\mathbf a}
\newcommand{\bbb}{\mathbf b}
\newcommand{\1}{\mathbbm1}
\newcommand{\e}{\D{e}}
\renewcommand{\S}{\mathcal{S}}
\newcommand{\leb}{\mathcal{L}}
\newcommand{\tagliato}{$\kern-5 mm -$}
\newcommand{\tagliat}{$\kern-4 mm -$}
\newcommand{\cchi}{\mbox{\large $\chi$}}
\newcommand{\D}[1]{\mbox{\rm #1}}
\newcommand{\dd}{\D{d}}
\newcommand{\om}{\omega}
\newcommand{\cv}[1]
{\, \displaystyle{\mathop{\longrightarrow}\limits_{#1}}\, }
\newtheorem{teorema}{Theorem}[section]
\newtheorem{prop}[teorema]{Proposition}
\newtheorem{lemma}[teorema]{Lemma}
\newtheorem{definition}[teorema]{Definition}
\newtheorem{cor}[teorema]{Corollary}
\newtheorem{guess}[teorema]{Remark}
\newtheorem{example}[teorema]{Example}
\newenvironment{oss}{\begin{guess} \begin{rm}}{\end{rm} \end{guess}}
\begin{document}
\date{\today}

\title{ Random Lax--Oleinik semigroups \\for Hamilton--Jacobi systems}
\author{Andrea Davini, Antonio Siconolfi \and Maxime Zavidovique}
\address{Dip. di Matematica, {Sapienza} Universit\`a di Roma,
P.le Aldo Moro 2, 00185 Roma, Italy}
\email{davini@mat.uniroma1.it}
\address{Dip. di Matematica, {Sapienza} Universit\`a di Roma,
P.le Aldo Moro 2, 00185 Roma, Italy}
\email{siconolf@mat.uniroma1.it}
\address{
IMJ (projet Analyse Alg\' ebrique), UPMC, 4, place Jussieu, Case
247, 75252 Paris C\' edex 5, France}
\email{zavidovique@math.jussieu.fr} \keywords{weakly coupled systems
of Hamilton--Jacobi equations, viscosity solutions, weak KAM Theory}
\subjclass[2010]{35F21, 49L25, 37J50.}

\begin{abstract}
Following the random approach of \cite{SMT14},  we define a
Lax--Oleinik formula adapted  to evolutive weakly coupled systems of
Hamilton--Jacobi equations. It is reminiscent of the corresponding
 scalar  formula, with the relevant difference that it has a stochastic character since it  involves, loosely speaking, random
switchings between the various associated Lagrangians.
 We prove that the related value functions are viscosity solutions to the system, and establish existence of minimal random
 curves  under fairly general hypotheses. Adding  Tonelli like assumptions on the
 Hamiltonians, we  show differentiability  properties of such minimizers, and  existence of adjoint random
 curves. Minimizers and adjoint curves are  trajectories of a twisted generalized Hamiltonian dynamics.
\end{abstract}
\maketitle
\section*{Introduction}

The aim of the paper is  to define a Lax--Oleinik formula adapted to
evolutive weakly coupled Hamilton--Jacobi systems and study its main
properties. The system can be  written as
\begin{equation}\label{intro evolutive system}
\partial_t u_i+H_i(x,D_xu_i)+\big(B\uu(t,x)\big)_i=0\qquad\hbox{in
$(0,+ \infty)\times\R^N$}
\end{equation}
for $i\in\ind$, where $\uu =(u_1,
\cdots, u_m)$ is the unknown function, and the $H_i$ are unrelated
Hamiltonians satisfying rather standard conditions, see Section
\ref{sys}. The hypotheses taken on the coupling matrix $B=(b_{ij})$
correspond to suitable monotonicity properties of the equations  with respect to the
entries $u_j$, see Remark \ref{oss monotone system}. They are complemented by  a  degeneracy
condition requiring all the rows of $B$ to sum to 0, yielding that $-B$ is
generator of a semigroup of stochastic matrices.

It is worth pointing out the relevance of such a formula in the case
of a single equation. Besides providing a variational way to
represent viscosity solutions of related evolutionary or stationary
equations, it enters crucially into play in a variety of theoretical
constructions and problems. Just to give some examples of its range
of application, we mention that the Weak KAM Theory, as developed by
Fathi \cite{Fa}, is founded on the Lax--Oleinik formula. Bernard's
construction of regular subsolutions relies on a perturbation of a
suitable initial datum via alternate application of the two
conjugate Lax--Oleinik semigroups \cite{BernardC11}. The
 variational representation formulae for solutions of
Hamilton--Jacobi equations was exploited as a key tool to establish
several asymptotic results, such as homogenization in random media
\cite{Sou99, RezTa00}, large--time behavior of solutions \cite{Fa1,
DS, Ishii08}, selection principles in the ergodic approximation of
the Hamilton--Jacobi equation \cite{DFIZ}.

A dynamical interpretation of the system  setting is illuminating
and provides some insight on our method. At least when the $H_i$
satisfy Tonelli like regularity assumptions, the $m$ Hamiltonian
dynamics related to the $H_i$ can be viewed as  possible evolutions
of a system, with  coupling term providing  random switching
criteria. Randomness being governed by  the continuous--time Markov
chain with $-B$ as transition matrix. In this context, the  adapted
Lax--Oleinik formula is devised to  define the associated
expectation semigroup.

The pattern can be thought as a nonlinear version  of the so--called
random evolution, a topic initiated by Reuben Hersh at the end of
the sixties and pursued by several authors as Griego and Pinsky, see
\cite{GH, Pi}. The theory provides a mathematical frame to models
where evolving systems modify the mode of motion because of random
changes in the environment.

Mostly using a pure PDE  approach,  weakly coupled systems have been
recently widely investigated, in the stationary as well as in the
time--dependent version.  The main purpose being  to find parallels,
under the aforementioned degeneracy assumption on the coupling
matrix, with Weak KAM theory for scalar Eikonal equations.

 This
stream of research was initiated in \cite{CamLey}, where the authors
studied homogenization \` a la Lions--Papanicolaou--Varadhan
\cite{LPV}, and pursued   in \cite{leyetal} with the proof of time
convergence results for solutions of evolutive problems, under
hypotheses close to  \cite{NR}. Other outputs in this vein can be
found in a series of works including \cite{Ng,MitTran2}. The links
with weak KAM theory were further made precise by two of the authors
of the present paper (AD and MZ) in \cite{DavZav14} where, among
other things, an appropriate  notion of Aubry set for systems was
given and some relevant properties of it were generalized from the
scalar case. This study partially relies on the properties of the
semigroup associated to the evolutive system \eqref{intro evolutive
system}, but, due to the inability to provide a variational
representation for it, such properties are established purely by
means of PDE tools and viscosity solution techniques. Weak KAM
Theory relies instead on the intertwining of PDE techniques,
variational arguments and ideas borrowed from dynamical systems via
 Lax--Oleinik formula.

A dynamical and variational point of view of the matter, integrating
the PDE methods, was brought in by the third author (AS) with
collaborators in \cite{SMT14,ISZ}.  This angle allowed detecting the
stochastic character of the problem, displayed by the random
switching nature of the dynamics related to systems.  This approach
led to the definition of an adapted random action functional, which
constitutes a key tool of our analysis as well. Representation
formulae for viscosity (sub)solutions to stationary systems and a
cycle characterization for the Aubry set were derived.\smallskip

A key output of the present paper is to provide another piece of the
dictionary between Weak KAM Theory and weakly coupled systems of
Hamilton--Jacobi equations. The study herein carried on is to be
regarded as a step in the direction of a deeper understanding of the
phenomena taking place at the critical level of the stationary
version of \eqref{intro evolutive system}.  In this line of
research, further issues to be addressed certainly include an
investigation on the dynamical and geometric properties of the Aubry
set and on the differentiability properties of critical subsolutions
on this set, as well as an extension to systems of the theory of
minimizing Mather measures. The variational formulae provided in
this paper seem to be the right tools for this kind of analysis.
\medskip

\subsection*{Presentation of the main results}
All the results can be localized by classical arguments of finite
speed of propagation. Therefore, we have preferred to state them in
the Euclidean space $\R^N$, keeping in mind that they remain valid
on any manifold.

The Lax--Oleinik formula for systems is obtained in the form of
infima of expectations, where the infimum is over a  suitable class
of random curves. The  admissible random curves for this procedure
are defined in Section \ref{sez admissible curves}. The nonlinear
character of the setting makes the structure of the formula more
involved than in the original linear random evolution models. In
this case, in fact, the expectation semigroup is simply obtained via
concatenation on any sample path of the deterministic semigroups
related to the switching operators plus averaging. The nonlinearity
brings in a sense a commutation problem between infimum and
expectation.  In this framework, we perform a key step in the
analysis, notably in view of studying viscosity solutions of the
evolutive system,  by establishing a differentiation  formula for
Lipschitz--continuous functions on admissible curves, see Theorem
\ref{teo key}.

Due to its random character, the formula is painful to  handle
directly. It is not easy to  show for instance  that  it defines a
semigroup of operators on suitable functional spaces or that the
associated value functions are continuous or even semicontinuous in
$(t,x)$. For this reason, we resort to  a rather indirect approach
putting it in relation to the system  via a sub--optimality
principle and  showing first  that the value function, for a
suitable initial datum,  is a viscosity subsolution to the system in
the discontinuous sense, see Section \ref{LO}. The procedure is not
new, but the vectorial character of the problem and the random
setting add a number of additional difficulties. The implementation
therefore requires some new tools and ideas.

Under mild regularity conditions on the initial datum, we moreover
prove in Section \ref{minimo} the existence of minimizing random
curves, namely  curves realizing the infimum in the Lax--Oleinik
formula. This is somehow surprising since in general the presence of
expectation operators makes such an output quite difficult to
obtain. Our strategy is   composed of two steps. We first untangle
the randomness and tackle the optimization problem on any sample
path,
  obtaining in this way,  in general, multiple deterministic minimizers, and  then
  build the sought random minimizer by performing  a measurable
selection.

We get  in Section \ref{regolare} more information on the regularity
of minimizers assuming Tonelli like conditions on the $H_i$.  Given
any  such  minimal random curve,  we prove,  for almost all fixed
sample path, differentiability in any bounded interval up to a
finite number of points. We derive  differentiability of the
solution of the system on such curves plus existence of an adjoint
random curve. Minimizers and adjoint curves  are governed by a
twisted generalized Hamiltonian dynamics.  As extra consequence, we
recover from the scalar case  regularizing properties of the action
 of the associated  semigroup on bounded Lipschitz--continuous initial data.

To complete the outline of the paper, we further point out that
Section \ref{prelimina} contains preliminary material plus notations
and terminology, Section \ref{sys} collects  some basic facts and
definitions on systems, and  Appendix \ref{appendix PDE} is devoted
to the proofs of some needed results for both systems and time--dependent
equations.

 We would finally  like to stress that in the random part, see Section
 \ref{caso}, we avoid as much as possible technicalities and advanced
 probabilistic notions working on spaces of c\`{a}dl\`{a}g  and continuous paths. Hopefully, it makes the presentation
 palatable for PDE oriented readers.

\bigskip

\begin{section}{Preliminaries}\label{prelimina}


\numberwithin{equation}{section}

With the symbols $\N$ and $\R_+$ we will refer to the sets of
positive integer numbers and nonnegative  real numbers,
respectively.  Given $k \in \N$, we denote by $\leb^k$ the Lebesgue
measure in $\R^k$. Given $E \subset \R^k$, we say that a property
holds {\em almost everywhere} ($a.e.$ for short) in  $E$ if it holds
up to a subset of $E$ with vanishing $\leb^k$ measure. We say that
$E$ has {\em full measure} if $\leb^k(\R^k \setminus E)=0$.  We
write $\langle \cdot, \cdot \rangle$ for the scalar product in
$\R^k$. We will denote by $\overline E$ the closure of $E$. We will
denote by $B_r(x)$ and $B_r$ the open Euclidean ball of radius $r>0$
centered at $x\in\R^k$ and $0$, respectively. By the term {\em
curve}, we mean throughout the paper a locally absolutely continuous
curve.

\indent
Let $E$ be a Borel subset $E$ of $\R^k$. Given a measurable function
$g:E\to\R$, we will denote by $\|g\|_{L^\infty(E)}$ the usual $L^\infty$--norm
of $g$. When $\mathbf{g}$ is vector--valued, i.e. $\boldg:E\to\R^d$, we will write
\[
\|\boldg\|_{L^\infty(E)}:=\max_{1\leqslant i\leqslant d}\|g_i\|_{L^\infty(E)}.
\]
The above notation will be mostly used in the case when either
$E=\clTcyl$ or $E=\R^N$. In the latter case, we will often write $\|\boldg\|_\infty$
in place of $\|\boldg\|_{L^\infty(\R^N)}$.

We will denote by $\left(\D{BUC}(\R^N)\right)^m$ the
space of bounded uniformly continuous functions $\uu=(u_1,\dots,u_m)^T$ from $\R^N$ to
$\R^m$ (where the upper--script symbol $T$ stands for the
transpose).
A function $\uu:\R^N\to\R^m$ will be termed Lipschitz continuous if
each of its components is $\kappa$--Lipschitz continuous, for some
$\kappa>0$. Such a constant $\kappa$ will be called a {\em Lipschitz
constant} for $\uu$. The space of all such functions will be denoted
by $\left(\D{Lip}(\R^N)\right)^m$. Analogously, for every fixed $T>0$,
we will denote by $\tvBUC$ and $\tvLip$ the space of bounded uniformly
continuous functions and Lipschitz continuous functions
from $\Tcyl$ to $\R^m$, respectively.
\smallskip\par

We will denote by $\1=(1,\cdots,1)^T$ the vector of $\R^m$ with all
components equal to 1. We consider the following partial relations
between elements  $\mathbf{a},\mathbf{b}\in\R^m$: $\aaa \leqslant
\mathbf{b}$ (respectively, $\aaa <\mathbf{b}$)   if $a_i\leqslant
b_i$ (resp., $<$) for every $i\in\ind$. Given two functions
$\uu,\vv:\R^N\to\R^m$, we will write $\uu \leqslant \vv$ in $\R^N$
(respectively, $<$) to mean that $\uu(x)\leqslant \vv(x)$
\big(resp., $\uu(x)<\vv(x)$\big)  {for every $x\in\R^N$}.

\smallskip

Given $n$ subsets $A_i$ of $\R^k$ and $n$ scalars $\lambda _i$,
$i=1, \cdots,n$, we define
\begin{equation}\label{minko}
    \sum_{i=1}^n \lambda_i \, A_i = \left\{x = \sum_i \lambda_i , y_i \mid
    y_i \in A_i \;\;\hbox{for any $i$} \right\}.
\end{equation}
\medskip

We give some definitions and results of set--valued analysis we will
need in what follows, the material is taken from \cite{CV}.  Let $X$,
$Y$ be Polish spaces, namely complete, separable metric spaces,
endowed with the Borel $\sigma$--algebras $\F_X$, $\F_Y$. We denote
by $Z$ a map from $X$ to the compact (nonempty) subsets of $Y$.
Given $E \subset Y$, we set
\[Z^{-1}(E)= \{x \in X \mid Z(x) \cap E \neq \emptyset\}.\]

\smallskip

\begin{definition}\label{uscmea}
    The set--valued map  $Z$ is said {\em upper semicontinuous} if $Z^{-1}(E)$ is closed
    for any closed subset $E$  of $Y$;  $Z$ is said {\em measurable} if $Z^{-1}(E) \in \F_X$ for any
    closed (or alternatively open) subset $E$ of $Y$.
\end{definition}

The next selection result is a simplified version, adapted to our
needs,  of Theorem III.8 in \cite{CV}.

\begin{teorema}\label{CastaingValadier} If the compact--valued map $Z$ is measurable then
it admits a measurable selection, namely there exists a measurable function $f: X
\to Y$ with $f(x) \in Z(x)$ for any $x$ and $f^{-1}(\F_Y) \subset
\F_X$.
\end{teorema}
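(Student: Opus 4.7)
The plan is to construct $f$ as the pointwise limit of a Cauchy sequence of countably-valued measurable functions $(f_n)_{n\geq 0}$ whose values approach $Z(x)$ geometrically. Fix a complete metric $d$ on $Y$ inducing its Polish topology and a countable dense sequence $\{y_k\}_{k\in\N}\subset Y$. The key measurability fact used repeatedly below is that, for every $y\in Y$ and every $r>0$, the set
\[
\{x\in X:\dist(y,Z(x))<r\}=\{x\in X:Z(x)\cap B^Y_r(y)\ne\emptyset\}
\]
lies in $\F_X$, where $B^Y_r(y)$ denotes the open $d$-ball of radius $r$ around $y$; this is precisely the measurability of $Z$ in its open-set formulation.

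The $f_n$ are built by induction so that, for every $x\in X$: (a) $\dist(f_n(x),Z(x))<2^{-n}$ and, for $n\ge 1$, (b) $d(f_n(x),f_{n-1}(x))<2^{-n+2}$. For $n=0$, the measurable sets $A_k:=\{x:Z(x)\cap B^Y_1(y_k)\ne\emptyset\}$ cover $X$ by density of $\{y_k\}$ and nonemptiness of $Z(x)$; set $f_0:=y_k$ on the disjointification $A_k\setminus\bigcup_{j<k}A_j$. Assuming $f_n$ satisfies (a), consider
\[
E^{(n+1)}_k:=\bigl\{x:\dist(y_k,Z(x))<2^{-(n+1)}\bigr\}\cap\bigl\{x:d(y_k,f_n(x))<2^{-n+1}\bigr\},
\]
which belongs to $\F_X$ (first factor by measurability of $Z$, second by measurability of $f_n$ and continuity of $d(y_k,\cdot)$). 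These sets cover $X$: given $x$, property (a) at level $n$ provides $z\in Z(x)$ with $d(f_n(x),z)<2^{-n}$, and density yields $y_k$ with $d(y_k,z)<2^{-(n+1)}$, so that both defining inequalities of $E^{(n+1)}_k$ are satisfied. Setting $f_{n+1}:=y_k$ on $E^{(n+1)}_k\setminus\bigcup_{j<k}E^{(n+1)}_j$ produces a measurable function fulfilling (a) and (b) at level $n+1$.

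Condition (b) makes $(f_n(x))_n$ a Cauchy sequence in the complete space $Y$ for every $x$, so $f(x):=\lim_n f_n(x)$ is well-defined and, being a pointwise limit of countably-valued measurable maps into the metric space $Y$, is $(\F_X,\F_Y)$-measurable. Passing to the limit in (a) yields $\dist(f(x),Z(x))=0$, and compactness, hence closedness, of $Z(x)$ forces $f(x)\in Z(x)$. The main obstacle is the inductive step: the two radii in $E^{(n+1)}_k$ must be chosen so that these sets simultaneously cover $X$ (via density and (a)), are measurable, and generate a sequence satisfying the Cauchy bound (b); once those choices are tuned, the compactness of the values $Z(x)$ enters only at the very end to convert vanishing distance into actual membership.
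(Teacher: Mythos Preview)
Your proof is correct; it is the classical Kuratowski--Ryll-Nardzewski construction, and every step checks out (the covering argument for the sets $E^{(n+1)}_k$, the Cauchy estimate from condition (b), and the final passage to the limit using closedness of $Z(x)$). Note, however, that the paper does not actually prove this theorem: it is stated without proof as a simplified version of Theorem~III.8 in Castaing--Valadier \cite{CV}, so there is no argument in the paper to compare yours against. Your write-up thus supplies a self-contained proof where the paper defers to the literature; it is worth remarking that your argument uses compactness of $Z(x)$ only through closedness, so it in fact establishes the (slightly stronger) closed-valued version of the selection theorem.
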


\medskip

Given a locally Lipschitz continuous  function $u: \R^k \to \R$ and
$x\in \R^k$ we define  the {\em Clarke generalized gradient}  at $x$
as
\[\partial^C u(x) = \mathrm {co} \{ p \in \R^k \mid p = \lim_n Du(x_n), \, x_n \to x\}\]
where co stands for the convex envelope and the approximating
sequences $x_n$ are made up by differentiability points of $u$.
Recall that the function $u$ is differentiable in a set of  full
$\leb^k$ measure thanks to Rademacher Theorem. We record for later
use

\smallskip

\begin{prop}\label{convexconvex} Given a locally Lipschitz continuous function $u: \R^k \to
\R$, the map\ $x \mapsto \partial^C u(x)$\
is convex compact valued and upper semicontinuous.
\end{prop}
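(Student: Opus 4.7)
The plan is to check the three assertions in turn: convexity is free from the definition, compactness follows from the local Lipschitz bound, and upper semicontinuity reduces to a closed-graph argument.

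Convexity of $\partial^C u(x)$ is built into its definition as a convex hull. For compactness, I would first introduce the auxiliary set
\[
G(x) := \big\{p\in\R^k \mid p=\lim_n Du(x_n),\ x_n\to x,\ u\text{ differentiable at }x_n\big\},
\]
which is nonempty by Rademacher's theorem, so that $\partial^C u(x)=\mathrm{co}\, G(x)$. If $L$ is a Lipschitz constant for $u$ on some ball $B_r(x)$, then $|Du(y)|\leqslant L$ at every differentiability point $y\in B_r(x)$, hence $|p|\leqslant L$ for every $p\in G(x)$. A standard diagonal extraction shows that $G(x)$ is closed, so $G(x)$ is compact, and since we are in finite dimension the convex hull $\partial^C u(x)$ is compact as well.

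For upper semicontinuity, by Definition \ref{uscmea} I must show that $(\partial^C u)^{-1}(E)$ is closed whenever $E\subset\R^k$ is closed. The key step is the closed-graph property: if $x_n\to x$, $p_n\in\partial^C u(x_n)$ and $p_n\to p$, then $p\in\partial^C u(x)$. By Carathéodory's theorem I can write $p_n=\sum_{i=1}^{k+1}\lambda_n^i q_n^i$ with $\lambda_n^i\geqslant 0$, $\sum_i\lambda_n^i=1$, and $q_n^i\in G(x_n)$; all coefficients and all $q_n^i$ are bounded (for $n$ large, $x_n$ lies in a fixed ball where the local Lipschitz bound applies), so up to subsequence $\lambda_n^i\to\lambda^i$ and $q_n^i\to q^i$. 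A diagonal argument over the differentiability points defining each $q_n^i$ places $q^i\in G(x)$, hence $p=\sum_i\lambda^i q^i\in\mathrm{co}\,G(x)=\partial^C u(x)$. The passage from closed graph to the closedness of $(\partial^C u)^{-1}(E)$ is then routine: given $x_n\in(\partial^C u)^{-1}(E)$ with $x_n\to x$, pick $p_n\in\partial^C u(x_n)\cap E$; by local boundedness extract $p_n\to p$, which lies in $E$ by closedness and in $\partial^C u(x)$ by the closed graph property, so $x\in(\partial^C u)^{-1}(E)$.

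The only mildly delicate step is the diagonal extraction showing that limits of elements of $G(x_n)$ remain in $G(x)$; however, because differentiability points of $u$ form a set of full Lebesgue measure near $x$, one can always pick approximating differentiability points with arbitrarily small error both in position and in gradient, making the argument elementary.
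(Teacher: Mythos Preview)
Your argument is correct and follows the standard route in nonsmooth analysis: convexity is by definition, compactness comes from the local Lipschitz bound plus closedness of the limiting set $G(x)$, and upper semicontinuity is obtained from the closed-graph property together with local boundedness, the passage through Carath\'eodory's theorem being the natural way to handle the convex hull.

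There is nothing to compare against, however: the paper does not prove this proposition. It is merely ``recorded for later use'' as a known fact about the Clarke generalized gradient (see, e.g., Clarke's monograph cited as \cite{Cl13} in the paper). Your write-up is a clean self-contained verification of that fact.
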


\smallskip

 Even if the
following statement is well known, we provide a proof for reader's
convenience.

\smallskip

\begin{lemma}\label{clarke} Let $u: \R^k \to \R$, $\eta: \R_+  \to \R^k$ be a locally Lipschitz
continuous function and a locally  absolutely continuous curve,
respectively. Let $s \geqslant 0$ be such that $t \mapsto u\big(\eta(t)\big)$ and
$t \mapsto \eta (t)$ are both differentiable at $s$. Then
\[ \frac \dd{\dd t} u\big(\eta(t)\big)_{\mbox{\Large $|$}_{t=s}} = \langle p,
\dot \eta(s) \rangle \quad\hbox{for some
 $p \in \partial^C u\big(\eta(s)\big)$.}\]
\end{lemma}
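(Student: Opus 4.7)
The plan is to use Lebourg's mean value theorem for locally Lipschitz functions together with the upper semicontinuity of the Clarke gradient recalled in Proposition \ref{convexconvex}.

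Set $x_0 := \eta(s)$, $v := \dot\eta(s)$ and $\alpha := \frac{\D d}{\D dt} u\big(\eta(t)\big)\ats$. Fix any sequence $h_n \to 0$ (taken positive if $s=0$) and apply Lebourg's mean value theorem to the pair $\eta(s)$, $\eta(s+h_n)$: this yields a point $z_n$ lying on the open segment between $\eta(s)$ and $\eta(s+h_n)$, together with a vector $p_n \in \partial^C u(z_n)$, such that
\[
u\big(\eta(s+h_n)\big)-u\big(\eta(s)\big) \;=\; \langle p_n, \eta(s+h_n)-\eta(s)\rangle.
\]
Dividing by $h_n$ gives the identity
\[
\frac{u\big(\eta(s+h_n)\big)-u\big(\eta(s)\big)}{h_n} \;=\; \Big\langle p_n, \frac{\eta(s+h_n)-\eta(s)}{h_n}\Big\rangle.
\]

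Since $\eta$ is continuous, $z_n \to x_0$. The vectors $p_n$ stay in a bounded set: $u$ is Lipschitz on a neighborhood of $x_0$ with some constant $L$, so $|p_n|\leqslant L$ for $n$ large. Extract a subsequence with $p_{n_k}\to p$ for some $p\in\R^k$. By the upper semicontinuity and convex compact-valuedness of the Clarke gradient (Proposition \ref{convexconvex}), passing to the limit $z_{n_k}\to x_0$ forces $p \in \partial^C u(x_0)$.

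It remains only to pass to the limit in the displayed equality along $\{h_{n_k}\}$. The left-hand side converges to $\alpha$ by the assumed differentiability of $t\mapsto u(\eta(t))$ at $s$, while on the right-hand side $\frac{\eta(s+h_{n_k})-\eta(s)}{h_{n_k}} \to v$ by the assumed differentiability of $\eta$ at $s$ and $p_{n_k}\to p$ by construction; hence the right-hand side converges to $\langle p, v\rangle$. This gives $\alpha = \langle p, \dot\eta(s)\rangle$ with $p\in\partial^C u(\eta(s))$, as required.

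The only delicate point is that the intermediate points $z_n$ are generally \emph{not} on the curve $\eta$, so the compactness/upper semicontinuity argument on $\partial^C u$ must be applied on a neighborhood of $x_0$ in $\R^k$ rather than along $\eta$; this is precisely what Proposition \ref{convexconvex} furnishes. Everything else is a routine limit passage, and no use is made of hypotheses beyond the local Lipschitz regularity of $u$, the local absolute continuity of $\eta$, and the pointwise differentiability at $s$.
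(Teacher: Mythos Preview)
Your proof is correct and takes a genuinely different route from the paper's argument. The paper proceeds via the Clarke directional derivative formula
\[
\limsup_{y\to x,\ h\to 0^+}\frac{u(y+hq)-u(y)}{h}=\max\{\langle p,q\rangle : p\in\partial^C u(x)\},
\]
applies it with $q=\dot\eta(s)$ and $q=-\dot\eta(s)$ to sandwich the derivative of $u\circ\eta$ between $\min$ and $\max$ of $\langle p,\dot\eta(s)\rangle$ over $\partial^C u(\eta(s))$, and then invokes convexity of the Clarke gradient to produce the desired $p$. Your argument instead uses Lebourg's mean value theorem on the chord $[\eta(s),\eta(s+h_n)]$, extracts a convergent subsequence of the resulting $p_n$, and lets the closed--graph property of $\partial^C u$ (which follows from Proposition~\ref{convexconvex}) do the work. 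Your approach is arguably more constructive, since the vector $p$ appears explicitly as a subsequential limit, and it bypasses the directional--derivative formula entirely; on the other hand, the paper's argument is self--contained modulo the single identity it quotes, whereas yours imports Lebourg's theorem as a black box. Both are standard and of comparable length.
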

\begin{proof} The function $u \circ \eta$ is clearly locally absolutely continuous. We start  from the relation
\begin{equation}\label{liplip2}
    \limsup_{y \to x \atop h \to 0^+} \frac {u(y+h \, q)- u(y)}h =
\max \{\langle p, q \rangle \mid  p \in \partial^C u(x)\}
\end{equation}
which holds true for any $x$, $q$ in $\R^k$, see \cite[ pp. 195--196 and 208]{Cl13}. If $s$
satisfies the assumptions then
\[ \frac \dd{\dd t} u\big(\eta(t)\big)_{\mbox{\Large $|$}_{t=s}} = \lim_{h \to 0} \frac
{u\big(\eta(s+h)\big)- u\big(\eta(s)\big)}h\] and
\[ \lim_{h \to 0} \frac {u\big(\eta(s+h)\big)- u\big(\eta(s)+ h \, \dot\eta(s)\big)}h =0\]
which implies
\[ \frac \dd{\dd t} u\big(\eta(t)\big)_{\mbox{\Large $|$}_{t=s}} =
\lim_{h \to 0^+} \frac {u\big(\eta(s)+h \, \dot\eta(s)\big)- u\big(\eta(s)\big)}h\]
and taking into account \eqref{liplip2} we get
\begin{equation}\label{liplip3}
    \frac \dd{\dd t} u\big(\eta(t)\big)_{\mbox{\Large $|$}_{t=s}}
     \leqslant \max \{\langle p, \dot\eta(s) \rangle \mid p \in \partial^C
u(x)\}.
\end{equation}
 We further have
 \[ -\frac \dd{\dd t} u\big(\eta(t)\big)_{\mbox{\Large $|$}_{t=s}} =
 \lim_{h \to 0^+} \frac {u\big(\eta(s)+h \,\big(- \dot\eta(s)\big)\big)- u\big(\eta(s)\big)}h\]
and consequently
\begin{eqnarray}
  \frac \dd{\dd t} u\big(\eta(t)\big)_{\mbox{\Large $|$}_{t=s}}  &\geqslant&
  - \max \big\{\big\langle p, \big(- \dot\eta(s)\big) \big\rangle \mid p \in
    \partial^C u(x)\big\} \label{liplip4} \\
   &=& \min \big\{\big\langle p, \dot\eta(s) \big\rangle \mid p \in \partial^C
u(x)\big\} \nonumber
\end{eqnarray}
Bearing in mind that $\partial^C u\big(\eta(s)\big)$ is convex, we directly
deduce the assertion  from \eqref{liplip3} and  \eqref{liplip4}.
\end{proof}

\medskip

We write down, in view of future use, a version of
Denjoy--Young--Saks Theorem, see \cite[pp. 17--19]{RN}.  A
definition is preliminarily needed:  for a real valued function $f$,
the upper right and lower right Dini derivative at a point $s$ are
given, respectively, by
\[
\limsup_{h \to 0^+} \frac{f(s+h)-f(s)}h,
\qquad\qquad
\liminf_{h \to 0^+} \frac{f(s+h)-f(s)}h.
\]

\smallskip

\begin{teorema} Let $f$ be a real valued function defined on an
interval. Then outside a set of  vanishing  $\leb^1$ measure the
following condition holds true: if $f$ is not differentiable  at $s$
then one of the two right Dini derivatives must be infinite.
\end{teorema}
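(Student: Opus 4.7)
The plan is to deduce the statement directly from the full four-case classification of the classical Denjoy--Young--Saks theorem, exactly as proved in \cite[pp.~17--19]{RN}. That theorem asserts that, outside a set of vanishing $\leb^1$ measure, every real function $f$ on an interval falls into exactly one of the following four regimes at the point $s$: (i) the four Dini derivatives of $f$ coincide and are finite, so that $f$ is differentiable at $s$; (ii) the upper right Dini derivative satisfies $D^+f(s)=+\infty$ and the lower left satisfies $D_-f(s)=-\infty$, while the remaining two Dini derivatives are finite and equal; (iii) the symmetric situation $D^-f(s)=+\infty$, $D_+f(s)=-\infty$ with the other two Dini derivatives finite and equal; (iv) all four extreme Dini derivatives are infinite, with $D^+f(s)=D^-f(s)=+\infty$ and $D_+f(s)=D_-f(s)=-\infty$.

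The first step is then to fix the null set $N$ outside of which the above dichotomy holds, and argue pointwise on its complement. If $s \notin N$ and $f$ is not differentiable at $s$, case (i) is ruled out, so one of (ii), (iii), (iv) must occur. In case (ii) one has $D^+f(s)=+\infty$, so the upper right Dini derivative is infinite. In case (iii) one has $D_+f(s)=-\infty$, so the lower right Dini derivative is infinite. In case (iv), both right Dini derivatives are infinite. In all three non-differentiable regimes at least one of the two right Dini derivatives is infinite, which is exactly the conclusion we want.

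There is no real obstacle here beyond making sure the classification is applied correctly; the whole point is that the classical DYS theorem controls the left-hand and right-hand behavior jointly, and the present statement merely records the right-hand shadow of this classification. Consequently the proof amounts to a single invocation of \cite[Theorem on pp.~17--19]{RN} together with an inspection of the four cases, with no additional measure-theoretic input.
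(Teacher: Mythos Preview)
Your proposal is correct and matches the paper's approach: the paper does not give an independent proof of this statement but simply records it as a version of the classical Denjoy--Young--Saks theorem, citing \cite[pp.~17--19]{RN}. Your argument just makes explicit how the stated right-hand version follows from the full four-case classification, which is exactly the intended reading.
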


\smallskip

As at a point $s$ where $f$ admits a right derivative, both right Dini derivatives are finite (and equal),
an immediate corollary is:

\smallskip

\begin{cor}\label{DYS} Let $f$ be a real valued function defined on an
interval. If $f$ is right differentiable a.e. then it is
differentiable a.e.
\end{cor}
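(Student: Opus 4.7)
The plan is to combine the Denjoy--Young--Saks theorem with the hypothesis in a direct, essentially one--line manner. Let $E\subset\R$ be the full--measure set on which $f$ admits a right derivative, and let $N$ be the negligible set outside of which the Denjoy--Young--Saks dichotomy holds. I want to show that $f$ is differentiable at every point of $E\setminus N$, which has full measure.

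First I would spell out the preliminary observation hinted at just before the statement: if $f$ is right differentiable at $s$, then both the upper and lower right Dini derivatives at $s$ coincide with the right derivative, so in particular they are both finite. This is immediate from the definitions of
\[
\limsup_{h\to 0^+}\frac{f(s+h)-f(s)}{h},\qquad \liminf_{h\to 0^+}\frac{f(s+h)-f(s)}{h},
\]
since both limsup and liminf reduce to the same finite limit.

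Next I would apply the Denjoy--Young--Saks Theorem stated above. For every $s\in E\setminus N$, neither right Dini derivative of $f$ is infinite, by the preliminary observation. The contrapositive of the Denjoy--Young--Saks dichotomy then forces $f$ to be differentiable at $s$. Since $E\setminus N$ has full $\leb^1$ measure, we conclude that $f$ is differentiable a.e., as required.

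I do not anticipate a substantive obstacle: the corollary is just a repackaging of Denjoy--Young--Saks once one notices that one--sided differentiability rules out the only escape route (infinite Dini derivatives) allowed by the theorem. The only minor point to be careful about is to record explicitly that right differentiability implies finiteness of both right Dini derivatives, so that the dichotomy in Denjoy--Young--Saks collapses to the ``differentiable'' alternative.
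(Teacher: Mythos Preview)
Your proposal is correct and follows exactly the same approach as the paper, which simply observes that right differentiability forces both right Dini derivatives to be finite and then invokes the contrapositive of the Denjoy--Young--Saks dichotomy on a full--measure set.
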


%
%
%
%
%
%
%
%
%

\medskip

\end{section}

\bigskip

\begin{section}{Weakly coupled systems}\label{sys}

We consider  the evolutionary weakly coupled system
\begin{equation}\label{HJS}\tag{HJS}
\partial_t u_i+H_i(x,D_xu_i)+\big(B\uu(t,x)\big)_i=0\quad\hbox{in
$(0,+ \infty)\times\R^N$}\quad\hbox{$ i\in\ind$,}
\end{equation}
where  we have denoted by
$\uu(t,x)=\big(u_1(t,x),\dots,u_m(t,x)\big)^T$ the vector--valued
unknown function. We assume the Hamiltonians $H_i$ to satisfy, for $
i \in \ind$
\smallskip

\begin{itemize}
    \item[(H1)] \quad $H_i \in\D{UC}(\R^N\times B_R)$ \quad for every $R>0$;\smallskip\\
    \item[(H2)] \quad $p\mapsto H_i(x,p)\qquad\hbox{is  convex on $\R^N$ for any
    $x\in \R^N$;}$ \smallskip\\
     \item[(H3)] \quad there exist two superlinear functions $\alpha,\beta:\R_+\to\R$ such that
     \[
     \qquad \alpha(|p|)\leqslant H_i(x,p)\leqslant \beta(|p|)\quad \hbox{for every $(x,p)\in\R^N\times\R^N$}.
     \]
    \end{itemize}
By {\em superlinear} we mean that
$$\lim\limits_{h\to +\infty}\frac{\alpha(h)}{h}=\lim\limits_{h\to +\infty}\frac{\beta(h)}{h}=+\infty.$$
It is easily seen that the continuity modulus of $H_i$ in $\R^N\times B_R$ and the functions $\alpha,\,\beta$ can be chosen independently of $i\in\ind$.

We  define the {\em Fenchel transform} $L_i:\R^N\times\R^N\to \R$ of
$H_i$ via
\begin{equation}\label{def L}
L_i(x,q):=\sup_{p\in\R^N}\left\{\langle p, q\rangle - H_i(x,p)\right\}.
\end{equation}
The function $L_i$ is called the Lagrangian associated with the
Hamiltonian $H_i$ and satisfies properties analogous to (H1)--(H3).

The {\em coupling matrix} $B=(b_{ij})$ has dimensions  $m\times m$
and satisfies
\begin{itemize}
 \item[(B)] \quad $b_{ij}\leqslant 0\quad \hbox{for $j\not=i$,}\qquad \sum\limits_{j=1}^m b_{ij}=0
\quad \qquad\hbox{for every $i\in\{1,\dots,m\}$.}$
\end{itemize}
We will denote by $\big(B\uu(t,x)\big)_i$  the $i$--th component of
the vector $B\uu(t,x)$, i.e.
\[
\big(B\uu(t,x)\big)_i=\sum\limits_{j=1}^m b_{ij}u_j(t,x).
\]

\medskip

\begin{oss}\label{oss monotone system}
The weakly coupled system \eqref{HJS} is a particular type of {\em
monotone system}, i.e. a system of the form $
 G_i\big(t,x,u_1(x),\dots,u_m(x),D_xu_i\big)=0$ in $\R^N$ for every $i\in\ind$,
where suitable monotonicity conditions  with respect to the
$u_j$--variables are assumed on the functions $G_i$, see
\cite{CamLey,Engler,Is92,IsKo,Len88}. In the  case  under
investigation, the conditions assumed on the
 coupling matrix imply, in particular, that each function $G_i$ is non--decreasing in $u_i$, and non--increasing in $u_j$ for
  $j\not=i$, for every $i\in\ind$.
\end{oss}

Given a  function $u$ on $\cyl$, we will call  {\em subtangent}
(respectively, {\em supertangent}) of $u$ at $(t_0,x_0)\in\cyl$ a
function $\phi$ of class $ \D{C}^1$ in a neighborhood  of $(t_0,x_{0})$
such that $u-\phi$ has a local minimum (resp., maximum) at $x_{0}$.
The  differentials of subtangents (resp. supertangents)
$\big(\partial_t \phi(t_0,x_0),D_x\phi(t_0,x_0)\big)$ make up  the
{\em subdifferential}  (resp. {\em superdifferential}) of $u$ at
$(t_0,x_0)$,   denoted $D^-u(t_0,x_0)$ \big(resp. $D^+u(t_0,x_0)$\big).  The
function $\phi$ will be furthermore termed {\em strict subtangent}
(resp., {\em strict supertangent}) if $u-\phi$ has a {\em strict}
local minimum (resp., maximum) at $(t_0,x_{0})$.

Given a function $\uu: \R_+ \times \R^N \to \R^m$ locally bounded
from above (resp. from below), we define its {\em upper
semicontinuous envelope} $\uu^*$ (resp. {\em lower semicontinuous
envelope} $\uu_*$)  as follows:
\begin{eqnarray*}
   u_i^*(t,x):=\limsup_{(s,y)\to (t,x)} u_i(s,y)\qquad\hbox{for every $(t,x)\in\clcyl$ and $i\in\ind$,}
\end{eqnarray*}
(resp. ${u_i}_*(t,x):=\liminf\limits_{(s,y)\to (t,x)} u_i(s,y)$ for
every $(t,x)\in\clcyl$ and $i\in\ind$).

\begin{definition}\label{defvisco}
 We will say that $\uu:\cyl \to\R^m$ locally bounded from above
 is a {\em viscosity subsolution} of \eqref{HJS} if
\begin{equation*}
p_t+H_i(x,p_x)+\big(B\uu^*(t,x)\big)_i\leqslant 0
\end{equation*}
for every $(t,x,i)\in\cyl\times\ind$, $(p_t,p_x)\in D^+ u_i^*(t,x)$.

We will say that $\uu:\cyl \to\R^m$ locally bounded from below
 is a {\em viscosity supersolution} of \eqref{HJS} if
\begin{equation*}
p_t+H_i(x,p_x)+\big(B\uu_*(t,x)\big)_i\geqslant 0
\end{equation*}
for every $(t,x,i)\in\cyl\times\ind$, $(p_t,p_x)\in D^-{
u_i}_*(t,x)$.

We will say that a locally bounded function  $\uu$ is a {\em
viscosity solution} if it is both a sub and a supersolution.
\end{definition}
In the sequel, solutions, subsolutions and supersolutions will be
always meant in the viscosity sense, hence the adjective {\em
viscosity} will be  omitted.
\medskip

Due to the continuity and convexity  properties of the Hamiltonians
$H_i$, we have:

\begin{prop}\label{prop subsol equivalent def}
Let $\uu$ be locally Lipschitz in $(0, + \infty) \times \R^N$. The
following properties are equivalent
\begin{itemize}
    \item[ (i)]$\uu$ is a (viscosity) subsolution of \eqref{HJS};
    \item[(ii)] $\uu$ is an  almost everywhere subsolution, i.e. for any $i\in\ind$
    \begin{eqnarray*}
 \partial_t u_i(t,x)+H_i\big(x,D_xu_i(t,x)\big)+\big(B\uu(t,x)\big)_i\leqslant 0\quad\hbox{for a.e. $(t,x)\in\cyl$;}
 \end{eqnarray*}
\item[(iii)] $\uu$ is a  Clarke subsolution, i.e.
\begin{eqnarray*}
 r +H_i\big(x,p\big)+\big(B\uu(t,x)\big)_i\leqslant 0\qquad\hbox{for every  $(r,p) \in \partial^C u_i(t,x)$,}
 \end{eqnarray*}
for every $(t,x,i)\in\cyl\times\ind$.
\end{itemize}
\end{prop}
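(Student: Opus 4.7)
The proof is a straightforward adaptation of the classical scalar equivalence, made possible by two observations: first, $\uu$ being locally Lipschitz implies $\uu^*=\uu_*=\uu$, so $(t,x)\mapsto (B\uu(t,x))_i$ is continuous and the system essentially splits into $m$ scalar Hamilton--Jacobi inequalities in which $(B\uu)_i$ plays the role of a continuous source term; second, the map $(r,p)\mapsto r+H_i(x,p)+(B\uu(t,x))_i$ is jointly convex in $(r,p)$, since $r$ enters linearly and $H_i(x,\cdot)$ is convex by (H2). I will establish the implications (i)$\Rightarrow$(ii)$\Rightarrow$(iii)$\Rightarrow$(i).

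The implication (i)$\Rightarrow$(ii) is immediate: by Rademacher's theorem each $u_i$ is differentiable a.e., and at any differentiability point $(t,x)$ one has $\big(\partial_t u_i(t,x),D_x u_i(t,x)\big)\in D^+u_i(t,x)$, so the viscosity subsolution inequality forces (ii).

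For (ii)$\Rightarrow$(iii) fix $(t,x,i)$ and $(r,p)\in\partial^C u_i(t,x)$. By definition we can write $(r,p)=\sum_{k=1}^K \lambda_k (r_k,p_k)$ as a convex combination of vectors of the form $(r_k,p_k)=\lim_n \big(\partial_t u_i(t_n^k,x_n^k),D_x u_i(t_n^k,x_n^k)\big)$ with $(t_n^k,x_n^k)\to (t,x)$ among differentiability points of $u_i$. The set of differentiability points where the inequality in (ii) fails has zero $\leb^{N+1}$--measure, and the set of differentiability points of $u_i$ has full measure; since every neighborhood of $(t,x)$ has positive measure, by a standard perturbation we may assume each $(t_n^k,x_n^k)$ lies in the full--measure set where both properties hold simultaneously. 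At each such point (ii) yields
\[
\partial_t u_i(t_n^k,x_n^k)+H_i\big(x_n^k,D_x u_i(t_n^k,x_n^k)\big)+\big(B\uu(t_n^k,x_n^k)\big)_i\leqslant 0.
\]
Letting $n\to\infty$ and using the continuity of $H_i$ (by (H1)) and of $B\uu$, we obtain $r_k+H_i(x,p_k)+(B\uu(t,x))_i\leqslant 0$ for every $k$. Multiplying by $\lambda_k$, summing, and exploiting the joint convexity recalled above yields (iii).

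The implication (iii)$\Rightarrow$(i) rests on the inclusion $D^+u_i(t,x)\subset \partial^C u_i(t,x)$, valid for any locally Lipschitz function (see \cite[Ch.~1]{Cl13}): indeed, if $\phi\in \D{C}^1$ is a supertangent to $u_i$ at $(t,x)$, then $(\partial_t\phi,D_x\phi)(t,x)$ is a reachable gradient limit of $u_i$ at $(t,x)$, hence belongs to $\partial^C u_i(t,x)$. Thus any $(p_t,p_x)\in D^+u_i^*(t,x)=D^+u_i(t,x)$ satisfies the Clarke inequality (iii), which is exactly the viscosity subsolution condition.

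The one step that requires a bit of care is (ii)$\Rightarrow$(iii): one must justify that the approximating sequences appearing in the definition of $\partial^C u_i$ can be taken in the full--measure set where the a.e. inequality holds, and then pass to the limit and through the convex combination. Once the joint convexity of the subsolution inequality in $(r,p)$ is noticed, this is routine; elsewhere the argument is essentially an application of Rademacher's theorem and the inclusion $D^+\subset\partial^C$.
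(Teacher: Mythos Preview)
The paper does not actually give a proof of this proposition: it is stated as a direct consequence of the continuity and convexity properties of the $H_i$ and then used without further justification. Your argument is therefore not being compared to a written proof but to an implicit ``standard'' one, and the scheme (i)$\Rightarrow$(ii)$\Rightarrow$(iii)$\Rightarrow$(i) you follow is exactly the expected route.

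Two points deserve correction. In (ii)$\Rightarrow$(iii), the sentence ``by a standard perturbation we may assume each $(t_n^k,x_n^k)$ lies in the full--measure set'' is not a perturbation argument at all: what you are really invoking is the null--set insensitivity of the Clarke gradient, namely that for any Lebesgue--null set $S$ one has
\[
\partial^C u_i(t,x)=\mathrm{co}\Big\{\lim_n Du_i(t_n,x_n)\ :\ (t_n,x_n)\to(t,x),\ (t_n,x_n)\notin S,\ u_i\ \hbox{differentiable at}\ (t_n,x_n)\Big\},
\]
see \cite{Cl13}. State this explicitly rather than calling it a perturbation. More importantly, in (iii)$\Rightarrow$(i) your justification of the inclusion $D^+u_i(t,x)\subset\partial^C u_i(t,x)$ is wrong: elements of the superdifferential are \emph{not} in general reachable gradient limits (take $u(x)=-|x|$ at $0$: the superdifferential is $[-1,1]$, the reachable gradients only $\{-1,1\}$). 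The inclusion is nevertheless true for locally Lipschitz functions, but it comes from the characterization of $\partial^C$ via the generalized directional derivative (cf.\ \eqref{liplip2} in the paper) together with the fact that any element of $D^+u_i(t,x)$ is dominated by that upper support function; equivalently, one uses that $\partial^C$ contains both the Fr\'echet sub-- and superdifferential. Replace your one--line justification accordingly and the proof is complete.
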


\vspace{2ex}

The following holds:

\begin{teorema}\label{teo existence evo wcs}
Let $\uu^0\in \big(\D{BUC}(\R^N)\big)^m$. There exists a unique
solution $\uu(t,x)$ of  \eqref{HJS} in  $(0,+ \infty) \times \R^N$
agreeing with $\uu^0$ at $t =0$,  which belongs to
$\big(\D{BUC}([0,T]\times\R^N)\big)^m$ for any  $T > 0$. If $\uu^0$
is furthermore assumed Lipschitz continuous,
 then, for every $T>0$, $\uu\in\tvLip$,  with  Lipschitz
constant  solely  depending on  $H_1,\dots,H_m$ and $B$, on
$\|\uu^0\|_{L^\infty(\R^N)}$,
$\|D\uu^0\|_{L^\infty(\R^N)}$, and on $T$.\medskip
\end{teorema}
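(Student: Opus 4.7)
The proof rests on three standard pillars adapted to the vectorial setting: a comparison principle for bounded sub- and supersolutions of \eqref{HJS}, Perron's method with explicit affine-in-$t$ barriers, and a doubling-of-variables scheme for the quantitative Lipschitz bound. I would first invoke the comparison principle on $\clTcyl$: if $\uu$ is u.s.c., $\vv$ is l.s.c., both bounded, $\uu$ a subsolution and $\vv$ a supersolution with $\uu(0,\cdot)\leqslant\vv(0,\cdot)$, then $\uu\leqslant\vv$ throughout. This is classical for weakly coupled systems satisfying (B); the vectorial extension of the scalar doubling argument goes through thanks to $B\1=0$, which balances the coupling across the two copies. Uniqueness is then immediate, and since $\uu+c\1$ also solves \eqref{HJS} whenever $\uu$ does (because $B\1=0$), comparison further yields the $L^\infty$-contraction
\[
\|\uu(t,\cdot)-\vv(t,\cdot)\|_\infty\leqslant\|\uu(0,\cdot)-\vv(0,\cdot)\|_\infty
\]
between any two bounded solutions, which I would use crucially in the BUC case.

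For existence with $\uu^0\in\vLip$ of Lipschitz constant $L$, set $K:=\sup\{|H_i(x,p)|:\,i\in\ind,\,x\in\R^N,\,|p|\leqslant L\}$ and $C:=K+(\max_{i,j}|b_{ij}|)\,m\,\|\uu^0\|_\infty$. Because $B\1=0$ and every $p_x\in D^-u^0_i(x)$ has norm at most $L$, the functions $\uu^0(x)\pm Ct\1$ are respectively a super- and a subsolution of \eqref{HJS}. Perron's method adapted to weakly coupled systems (cf.\ \cite{CamLey,IsKo}) then produces a solution $\uu$ sandwiched between them; the sandwich forces $\|\uu(t,\cdot)-\uu^0\|_\infty\leqslant Ct$ and hence continuity at $t=0$. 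For $\uu^0\in\vBUC$, I would approximate uniformly by $\uu^0_n\in\vLip$ (e.g.\ by mollification), solve \eqref{HJS} with each $\uu^0_n$, and use the $L^\infty$-contraction to extract a Cauchy sequence in $\tvBUC$ whose uniform limit is a solution with datum $\uu^0$, by stability of the viscosity notions.

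The remaining quantitative Lipschitz estimate when $\uu^0\in\vLip$ is the technical heart. I would perform a doubling of variables on
\[
\Phi(t,x,y):=\max_{i\in\ind}\bigl(u_i(t,x)-u_i(t,y)\bigr)-\Lambda|x-y|-\tfrac{|x-y|^2}{2\eta}-\sigma t,
\]
pick $i_*\in\ind$ realising the maximum at a putative interior maximum $(\hat t,\hat x,\hat y)$ with $\hat t>0$ and $\hat x\neq\hat y$, and insert the sub/supersolution inequalities for $u_{i_*}$ at $(\hat t,\hat x)$ and $(\hat t,\hat y)$; since $u_{i_*}$ has superdifferential at $(\hat t,\hat x)$ and subdifferential at $(\hat t,\hat y)$ sharing the same spatial slope $p_x$ coming from the penalization and a time-slope difference equal to $\sigma$, subtracting leaves
\[
\sigma+\bigl[H_{i_*}(\hat x,p_x)-H_{i_*}(\hat y,p_x)\bigr]+\sum_{j=1}^m b_{i_*j}\bigl(u_j(\hat t,\hat x)-u_j(\hat t,\hat y)\bigr)\leqslant 0.
\]
The coupling sum is nonnegative by the choice of $i_*$: since $b_{i_*i_*}\geqslant 0\geqslant b_{i_*j}$ for $j\neq i_*$, $\sum_j b_{i_*j}=0$, and $u_j(\hat t,\hat x)-u_j(\hat t,\hat y)\leqslant u_{i_*}(\hat t,\hat x)-u_{i_*}(\hat t,\hat y)$. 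Uniform continuity of the $H_i$ on $\R^N\times B_\Lambda$ from (H1) bounds the $H_{i_*}$-increment by a modulus of $|\hat x-\hat y|$, which vanishes as $\eta\to 0$; this forces $\sigma\leqslant 0$, contradicting the choice $\sigma>0$. Hence $\Phi\leqslant 0$, and letting $\sigma\to 0$ yields $|u_i(t,x)-u_i(t,y)|\leqslant\Lambda|x-y|$ with $\Lambda$ depending only on $L,\|\uu^0\|_\infty,H_1,\dots,H_m,B$ and $T$. The time-Lipschitz estimate then follows from Proposition \ref{prop subsol equivalent def}: the a.e.\ identity $\partial_tu_i=-H_i(x,D_xu_i)-(B\uu)_i$ together with the just obtained space-gradient and sup bounds provides an $L^\infty$ bound on $\partial_tu_i$. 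The main obstacle is precisely this doubling step: the coupling must be kept on the correct side of the inequality, and it is exactly the pair (B) and $B\1=0$ that allows the scalar argument to survive in the vectorial setting.
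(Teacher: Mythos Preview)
Your overall architecture is sound and close in spirit to the paper's, but the route to the Lipschitz estimate and the handling of comparison differ in a way worth flagging, and there is one genuine soft spot.

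\textbf{Comparison.} You invoke a comparison principle for bounded semicontinuous sub/supersolutions as ``classical''. In fact the standard result for weakly coupled systems (e.g.\ \cite{CamLey}) requires the structural condition $|H_i(x,p)-H_i(y,p)|\leqslant\widetilde\nu\big((1+|p|)|x-y|\big)$, which is \emph{not} implied by (H1)--(H3). The paper deals with this carefully: it first truncates the Hamiltonians to $\widetilde H_i(x,p)=\min\{H_i(x,p),|p|+n\}$ so that the structural condition holds, obtains existence of a solution for the truncated system via \cite{CamLey}, and proves a comparison principle only under the extra hypothesis that one of the two functions is Lipschitz (so that the momenta stay in a fixed ball and (H1) suffices). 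The general comparison of Proposition~\ref{prop comparison} is then deduced a posteriori by sandwiching with a Lipschitz solution. Your scheme can be repaired the same way, but as written the first step is a gap.

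\textbf{Lipschitz estimate.} Here the two arguments diverge. You run a doubling-of-variables on $\Phi(t,x,y)$ to get the space Lipschitz bound directly, and read off the time bound from the equation afterwards. The paper does the opposite, and more cheaply: it first gets the \emph{time} Lipschitz bound by (i) the barriers $\uu^0\pm Ct\1$ together with (ii) comparing $\uu(\cdot+h,\cdot)$ with $\uu\pm\|\uu(h,\cdot)-\uu^0\|_\infty\1$, which gives $\|\uu(t+h,\cdot)-\uu(t,\cdot)\|_\infty\leqslant Ch$. Then the \emph{space} bound comes for free from coercivity: since $\uu$ is a subsolution, $\widetilde H_i(x,D_xu_i)\leqslant -\partial_t u_i-(B\uu)_i\leqslant M$, so $|D_xu_i|$ is bounded, and for $n$ large the truncation is inactive. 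This avoids doubling entirely.

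Your doubling argument, by contrast, has a loose end: the spatial test slope at the maximum is $p_x=\Lambda\,\dfrac{\hat x-\hat y}{|\hat x-\hat y|}+\dfrac{\hat x-\hat y}{\eta}$, and the second term is not a priori bounded as $\eta\to 0$ (only $|\hat x-\hat y|^2/\eta$ is controlled). Hence (H1), which gives uniform continuity only on $\R^N\times B_R$, does not directly bound $H_{i_*}(\hat x,p_x)-H_{i_*}(\hat y,p_x)$. The fix is to first use the \emph{subsolution} inequality at $(\hat t,\hat x)$ together with coercivity (H3) to get $|p_x|\leqslant R_0$ independent of $\eta$; only then can you apply (H1). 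You should also either double the time variable or invoke the parabolic Crandall--Ishii lemma, since with a single $t$ the map $(t,x)\mapsto u_{i_*}(t,x)-u_{i_*}(t,\hat y)$ is not a legitimate test configuration.

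In summary: your plan works once the comparison principle is established under the weaker hypothesis ``one side Lipschitz'' (or via Hamiltonian truncation), and once the gradient bound in the doubling step is justified through coercivity. The paper's route---truncate, borrow existence from \cite{CamLey}, get time-Lipschitz by barriers and time-shift, then space-Lipschitz by coercivity---sidesteps both issues.
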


The uniqueness of the solutions provided by the previous theorem is in fact a consequence of the following comparison principle:

\begin{prop}\label{prop comparison}
Let  $\vv,\,\ww:[0,+ \infty )\times\R^N\to\R^m$ be an upper
semicontinuous subsolution and a lower semicontinuous
supersolution of \eqref{HJS}, respectively. Let us assume that $\vv$ and $\ww$ are bounded in $\clTcyl$, for every $T>0$, and that either
$\vv(0,\cdot)$ or $\ww(0,\cdot)$ are in $\D{BUC}(\R^N)$.
Then
\[
 v_i(t,x)-w_i(t,x)\leqslant\max_{1\leqslant i \leqslant m}\,\sup_{\R^N} \big(v_i(0,\cdot)-w_i(0,\cdot)\big)
\]
for all $(t,x)\in\ccyl$ and $i\in\ind$.
\end{prop}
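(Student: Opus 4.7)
The strategy is the classical viscosity doubling-of-variables argument, tailored to the vectorial setting so as to exploit the monotonicity encoded in hypothesis (B). A preliminary observation is that $B\1=0$ (the rows of $B$ sum to zero), hence adding any multiple of $\1$ to $\ww$ preserves the supersolution property. Setting $M:=\max_{i\in\ind}\sup_{\R^N}(v_i(0,\cdot)-w_i(0,\cdot))$ and replacing $\ww$ by $\ww+M\1$, it is enough to prove that if $\vv(0,\cdot)\leq\ww(0,\cdot)$ componentwise, then $\vv\leq\ww$ on $\ccyl$. Fix $T>0$ and suppose by contradiction that
\[
\sigma:=\sup\bigl\{v_i(t,x)-w_i(t,x):i\in\ind,\,(t,x)\in[0,T]\times\R^N\bigr\}>0.
\]

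For parameters $\eps,\delta,\mu>0$, introduce
\[
\Phi(i,t,s,x,y):=v_i(t,x)-w_i(s,y)-\frac{|x-y|^2+(t-s)^2}{\eps^2}-\delta\bigl(\sqrt{1+|x|^2}+\sqrt{1+|y|^2}\bigr)-\frac{\mu}{T-t}.
\]
Upper semicontinuity of $\vv$, lower semicontinuity of $\ww$, strip-boundedness of both, and the coercive penalties ensure that $\Phi$ attains a maximum at some $(i_\eps,t_\eps,s_\eps,x_\eps,y_\eps)\in\ind\times[0,T)^2\times\R^N\times\R^N$. Choosing $\delta,\mu$ small enough that this supremum still exceeds $\sigma/2$, the standard estimates yield $|x_\eps-y_\eps|,|t_\eps-s_\eps|\to 0$ as $\eps\to 0$; moreover the BUC hypothesis on $\vv(0,\cdot)$ or $\ww(0,\cdot)$ combined with $\vv(0,\cdot)\leq\ww(0,\cdot)$ prevents accumulation at the initial time, so $t_\eps,s_\eps>0$ for $\eps$ small. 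Writing the sub- and super-solution inequalities for component $i_\eps$ at $(t_\eps,x_\eps)$ and $(s_\eps,y_\eps)$, and subtracting, produces
\[
\frac{\mu}{(T-t_\eps)^2}+\bigl[H_{i_\eps}(x_\eps,p_\eps)-H_{i_\eps}(y_\eps,q_\eps)\bigr]+\bigl[(B\vv(t_\eps,x_\eps))_{i_\eps}-(B\ww(s_\eps,y_\eps))_{i_\eps}\bigr]\leq 0,
\]
with $p_\eps,q_\eps$ the gradients read off from the penalization.

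The crucial algebraic step is the cancellation of the coupling contribution. Since none of the remaining summands in $\Phi$ depends on $i$, the index $i_\eps$ maximizes $j\mapsto d_j:=v_j(t_\eps,x_\eps)-w_j(s_\eps,y_\eps)$ over $\ind$. Combined with $b_{i_\eps i_\eps}=-\sum_{j\neq i_\eps}b_{i_\eps j}$ and $b_{i_\eps j}\leq 0$ for $j\neq i_\eps$, this rewrites
\[
(B\vv(t_\eps,x_\eps))_{i_\eps}-(B\ww(s_\eps,y_\eps))_{i_\eps}=\sum_{j\neq i_\eps}b_{i_\eps j}(d_j-d_{i_\eps})\geq 0,
\]
each summand being the product of two nonpositive factors. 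Hence $\mu/(T-t_\eps)^2\leq H_{i_\eps}(y_\eps,q_\eps)-H_{i_\eps}(x_\eps,p_\eps)$. One then uses the coercivity (H3) together with strip-boundedness of $\vv,\ww$ to confine $p_\eps,q_\eps$ to a compact set (for $\delta$ fixed), so that $|x_\eps-y_\eps|\to 0$ combined with the uniform continuity (H1) of $H_{i_\eps}$ forces the right-hand side to vanish as $\eps\to 0$, contradicting $\mu/T^2>0$. Since $T$ was arbitrary, the conclusion follows. The main obstacle is this last gradient estimate: the doubling momenta $(x_\eps-y_\eps)/\eps^2$ are not a priori controlled, and one must combine the viscosity inequalities with the superlinearity of $\alpha$ in (H3) to trap them, crucially using that the coupling term is bounded thanks to the strip-boundedness of \emph{both} the sub- and supersolution.
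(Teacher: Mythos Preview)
Your doubling-of-variables outline is the natural approach and the coupling-cancellation step via assumption (B) is carried out correctly. The gap lies precisely where you flag the ``main obstacle'': the bound on the doubling momenta $p_\eps,q_\eps$ does \emph{not} follow from (H3) and strip-boundedness. From the subsolution inequality alone you only get
\[
\alpha(|p_\eps|)\leqslant H_{i_\eps}(x_\eps,p_\eps)\leqslant -\tfrac{2(t_\eps-s_\eps)}{\eps^2}-\tfrac{\mu}{(T-t_\eps)^2}-(B\vv)_{i_\eps},
\]
and the time-penalty term $2(t_\eps-s_\eps)/\eps^2$ is not a priori controlled. If instead you subtract the two viscosity inequalities first, that term cancels, but then all you can extract is $\alpha(|p_\eps|)\leqslant H_{i_\eps}(y_\eps,q_\eps)+C\leqslant \beta(|q_\eps|)+C$ together with $|p_\eps-q_\eps|\leqslant 2\delta$. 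Since $\alpha$ and $\beta$ are \emph{two different} superlinear functions in (H3), the inequality $\alpha(r)\leqslant\beta(r+2\delta)+C$ places no restriction on $r$ (take e.g.\ $\alpha(r)=r^2-K$, $\beta(r)=r^2$). So (H1) cannot be invoked and the final contradiction is not reached. This is exactly why comparison results for coercive Hamiltonians typically require a structure condition of the type $|H(x,p)-H(y,p)|\leqslant\nu\bigl((1+|p|)|x-y|\bigr)$, which is not assumed here.

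The paper avoids the difficulty by an indirect route. It first proves comparison (Proposition~A.1) under the additional hypothesis that one of $\vv,\ww$ is Lipschitz on $\clTcyl$: then one of $p_\eps,q_\eps$ is automatically bounded (it lies in the sub/super-differential of a Lipschitz function), and $|p_\eps-q_\eps|\leqslant 2\delta$ bounds the other. For the general statement, after reducing to $\vv(0,\cdot)\leqslant\ww(0,\cdot)$, it inserts an $\eps$-gap, squeezes a Lipschitz initial datum $\uu^0$ between $\vv(0,\cdot)$ and $\ww(0,\cdot)+\eps\1$ (here the BUC hypothesis is used), solves \eqref{HJS} with that datum to obtain a Lipschitz solution $\uu$ via Theorem~\ref{teo existence evo wcs}, and then applies the Lipschitz comparison twice to get $\vv\leqslant\uu\leqslant\ww+\eps\1$. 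Your direct argument would work if you added a Lipschitz hypothesis on one side; without it, the gradient estimate is the missing idea.
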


The above stated results are, essentially, a consequence of what is proved in \cite{CamLey}, see Appendix \ref{appendix PDE} for more details.

\end{section}

\begin{section}{Random frame}\label{caso}

\begin{subsection}{Definitions and terminology}\label{sez prob}
In this subsection we make precise the random frame in which our
analysis takes place.  This is basically a way of introducing the
Markov chain generated by $- B$.

Following the constructive approach of  \cite{SMT14},  we take as
sample space the space of paths
\[ \omega:\R_+\to\ind \]
 that are right--continuous and possess
left--hand limits, denoted by $\Omega$.  These are known in
literature as {\em c\`{a}dl\`{a}g paths}, a French acronym for {\em
continu \`a droite, limite \`a gauche}.

We refer the reader to the magnificent book of Billingsley
\cite{Bill99} for a detailed treatment of the topics. By
c\`{a}dl\`{a}g property  and the fact that  the range of
$\omega\in\Omega$ is finite, the points of discontinuity of  any
such path are isolated and consequently finite in compact intervals
of $\R_+$ and countable (possibly finite) in the whole of $\R_+$. We
call them {\em jump times} of $\omega$.

The space  $\Omega$ is endowed  with a distance, named after  {\em
Skorohod}, see \cite{Bill99},  which turns it into a Polish space.
We denote by $\F$ the corresponding Borel  $\sigma$--algebra and,
for every $t\geqslant 0$, by $\pi_t:\Omega\to\ind$ the map that
evaluates each $\omega$ at $t$, i.e. $\pi_t(\omega)=\omega(t)$ for
every $\omega\in\Omega$.
It is known that $\F$ is the minimal $\sigma$--algebra that makes
all the functions $\pi_t$ measurable, i.e.
 $\pi_t^{-1}(i)\in\F$ for every $i\in\ind$ and $t\geqslant 0$. In
 other terms, the family $\CC$ of {\em cylinders}
\[\mathcal C(t_1, \cdots, t_k;i_1, \cdots, i_k)=
\left\{\omega\in\Omega\
\mid\omega(t_1)=i_1,\dots,\omega(t_k)=i_k\right\},\] with
$0\leqslant t_1<t_2<\dots<t_k,\ i_1,\dots,i_k\in\ind$ and $k\in\N$,
generates  $\F$.  Given  $t \geqslant 0$, the $\sigma$--algebra
generated by the cylinders of $\CC$  enjoying  the additional
property that $t_k \leqslant t$, is denoted by $\F_t$. Then
$\left\{\F_t\right\}_{t\geqslant 0}$ is a {\em filtration of $\F$},
 i.e. $\F_s\subseteq \F_t$ for every $0\leqslant s <t$ and $\cup_{t\geqslant 0}\,\F_t=\F$.
 It is in addition {\em right--continuous} in the sense that  $\F_t= \cap_{s >t} \F_s$ for any $t$.
 Note that $\F_0$ comprises a finite number of sets, namely $\Omega$,
$\varnothing$, $\Omega_i:=\{\omega\in\Omega\,:\,\omega(0)=i\,\}$
for every $i \in \ind$, and unions of such sets.
The cylinders constitute  a  {\em separating class}, in the sense
that any probability measure on $\F$  is identified by the values
taken on $\CC$,  see Theorem 16.6 in \cite{Bill99}.

Let $\mu$ be a probability measure on $(\Omega,\F)$. Given $E \in \F$, we define the {\em restriction } of $\mu$ to $E$
as\ \ $\mu \restr E (F) = \mu (E \cap F) \quad\hbox{for any $F \in
\F$}.$
The probability $\mu$ conditioned to the event $E\in\F$ is defined as
\begin{eqnarray*}
\mu(F\,|\,E):=\frac{\mu(F\cap E)}{\mu(E)}\qquad\hbox{for every $F\in\F$},
\end{eqnarray*}
where we agree that $\mu(F\,|\,E)=0$ whenever $\mu(E)=0$.

Let us now fix an $m\times m$ matrix $B$ satisfying assumption
(B). We record that $\e^{-tB}$ is a stochastic matrix for every $t\geqslant 0$, namely
a matrix with nonnegative entries and with each row summing to 1, see for instance
Appendix A in \cite{SMT14}. We endow $\Omega$ of a probability measure $\PP$ defined on
the $\sigma$--algebra $\F$ in such a way that the right--continuous
process $\left(\pi_t\right)_{t\geqslant 0}$ is a {\em Markov chain
with generator matrix $-B$}, i.e. it satisfies the Markov property
\begin{eqnarray}\label{Markov property}
\PP\big(\omega(t_k)=i_k\,|\,\omega(t_1)=i_1,\dots,\omega(t_{k-1})=i_{k-1}\,\big)
= \left(\e^{-B(t_k-t_{k-1})}\right)_{i_{k-1}i_k}
\end{eqnarray}
for all times $0\leqslant t_1<t_2<\dots<t_k$, states
$i_1,\dots,i_k\in\ind$ and $k\in\N$. For the existence and an explicit construction of such
a probability measure, we refer the reader to \cite{SMT14}. We will denote by $\PP_i$ the
probability measure $\PP$ conditioned to the event $\Omega_i$ and write $\EE_i$ for the
corresponding expectation operators. These entities will constitute
the basic building blocks of our analysis. It is easily seen that the Markov property
\eqref{Markov property} holds with $\PP_i$ in place of $\PP$, for every $i\in\ind$.

%

We proceed by introducing some more notations and terminology. We  call
{\em random variable} a  map $X:(\Omega,
\F)\to\big(\FF,\Bor(\FF)\big)$, where $\FF$ is a Polish space and
$\Bor(\FF)$  its Borel $\sigma$--algebra, satisfying
$X^{-1}(A)\in\F$ for every $A\in\Bor(\FF)$.

Given a probability measure $\mu$  on $(\Omega,\F)$,  we denote by
$X_\#\mu$ the {\em push--forward} of $\mu$ through the map $X$, i.e.
the probability measure on $\Bor(\FF)$ defined as
\[\left({X}_\#\mu\right)(A):=\mu\left(\{\omega\in\Omega\,:\,X(\omega)\in A\,\}\right)\quad\hbox{for
every $A \in \Bor(\FF)$.}\]

A probability measure $\nu$ on $\I:=\ind$ will be identified with a {\em probability vector} $\aaa\in\R^m$, i.e. a vector
with nonnegative components summing to 1, via the formula \
$\aaa\cdot\bbb:=\int_{\I} \bbb\,\dd \nu(i)$\ \ for every $\bbb\in\R^m$.

%
%
%

\smallskip

\subsection{Basic facts on $\PP_i$ and $\EE_i$}

 In this subsection we gather for later use some properties of
 probability measures $\PP_i$ and the corresponding expectation
 operators.
 \smallskip

\begin{lemma}\label{differ}  There exists a constant $C$ such
 that
 \[\PP_i \big (\{\omega \mid \omega(t_1) \neq \omega(t_2)\} \big ) \leqslant C \,
 |t_1-t_2| \qquad\hbox{for any $i \in \ind$, $t_1$, $t_2$ in $\R_+$}.\]
\end{lemma}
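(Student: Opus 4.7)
The plan is to reduce the claim to an elementary property of continuous--time Markov chains with bounded generator, namely that $1-(e^{-sB})_{jj}=O(s)$ uniformly in $j$. Without loss of generality I would assume $t_1\leqslant t_2$ and use the Markov property \eqref{Markov property}, together with the fact that each row of the stochastic matrix $e^{-B(t_2-t_1)}$ sums to $1$, to rewrite
\[
\PP_i\bigl(\{\omega\mid\omega(t_1)\neq\omega(t_2)\}\bigr)
=\sum_{j\neq k}(e^{-Bt_1})_{ij}(e^{-B(t_2-t_1)})_{jk}
=\sum_{j}(e^{-Bt_1})_{ij}\bigl(1-(e^{-B(t_2-t_1)})_{jj}\bigr).
\]

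Next, I would estimate the factor $1-(e^{-sB})_{jj}$ for $s\geqslant 0$. Differentiating $e^{-uB}$ in $u$ and integrating from $0$ to $s$ gives
\[
1-(e^{-sB})_{jj}=\int_0^s (Be^{-uB})_{jj}\,du=\int_0^s\sum_{\ell}b_{j\ell}(e^{-uB})_{\ell j}\,du.
\]
Here I would invoke assumption (B): the off--diagonal entries $b_{j\ell}$ are non--positive while $(e^{-uB})_{\ell j}\geqslant 0$ since $e^{-uB}$ is stochastic, so only the diagonal term contributes positively. Combined with $(e^{-uB})_{jj}\leqslant 1$, this yields
\[
0\leqslant 1-(e^{-sB})_{jj}\leqslant b_{jj}\,s\leqslant C\,s,\qquad C:=\max_{1\leqslant k\leqslant m}b_{kk}.
\]

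Substituting back into the first display and using that $\sum_j(e^{-Bt_1})_{ij}=1$, I would obtain $\PP_i(\omega(t_1)\neq\omega(t_2))\leqslant C(t_2-t_1)$; swapping the roles of $t_1$ and $t_2$ justifies the absolute value. The only delicate point is the careful use of the sign structure imposed by (B) in order to discard the off--diagonal contributions; once that is in place the rest is a mechanical computation that does not even require explicit smallness of $|t_2-t_1|$.
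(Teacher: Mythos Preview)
Your proof is correct and follows essentially the same route as the paper: decompose the event via the Markov property into $\sum_{j\neq k}(e^{-Bt_1})_{ij}(e^{-B(t_2-t_1)})_{jk}$, then bound the off--diagonal mass of $e^{-sB}$ linearly in $s$. The only difference is that the paper simply invokes the Lipschitz continuity of the entries of $t\mapsto e^{-tB}$ together with the fact that the off--diagonal sum vanishes at $s=0$, whereas you compute the derivative explicitly and exploit the sign structure in (B), which has the mild advantage of producing the explicit constant $C=\max_k b_{kk}$.
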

\begin{proof}  We fix  an index $i$ and  assume $t_2 >t_1$. We denote by $F$ the  set in
the statement. Then \ $F = \bigcup_{j=1}^m \bigcup_{k \neq j} \mathcal C(t_1,t_2; k,j)$\
and
\[
\PP_i \big ( \mathcal C(t_1,t_2; k,j) \big )
=
\PP_i \big(\omega(t_2)=j \mid \omega(t_1)=k \big)\, \PP_i \big( \omega(t_1)=k \big).
\]
By making use of the Markov property \eqref{Markov property} for $\PP_i$ and of the fact that $\e^{-tB}$ is a stochastic matrix we infer
\[
\PP_i(F)
\leqslant
\sum_{\substack{j, k=1 \\ j\not=k}}^m (\e^{- t_1 B})_{ik} \, (\e^{- (t_2-t_1) B})_{k j}
\leqslant
\sum_{\substack{j, k=1 \\ j\not=k}}^m (\e^{- (t_2-t_1) B})_{k j}.
\]
The assertion is obtained by taking into account that the rightmost
term in the above inequality is 0 when $t_2=t_1$ and all the entries
of the matrix $\e^{-tB}$ are Lipschitz continuous functions in
$\R_+$.
\end{proof}

\medskip

Given  $t \geqslant 0$ and an index $i$, the components of
${\pi_t}_{\#}\PP_i$  are equal to  $\PP_i \big(\mathcal C(t;j)\big)$, for
any $i$, and we  deduce from the definition of $\PP_i$
\[{\pi_t}_{\#}\PP_i  = \mathbf e_i \, \e^{-Bt } .\]
 This implies
for $0<s<t$
\begin{equation}\label{marziotta}
 \EE_i v_{\omega(t)}=  \sum_j v_j \, {\pi_t}_{\#}\PP_i (j) = \left(\e^{-Bt}\vv\right)_{i}=
 \sum_j (\e^{-B(t-s)}\vv)_j \, {\pi_s}_{\#}\PP_i (j)
\end{equation}
for any $\vv \in \R^m$.  We aim to extend the above formula with
a  random variable taking values in $\R^m$ in place of a constant
vector.  The task will be performed via approximation by simple
random variables, a difficulty is that while a constant vector is
trivially  $\F_0$--measurable, a general $\R^m$--valued random
variable is related in a more involved way to the filtration $\F_t$.
As a preliminary step, we recall from \cite[Lemma 3.4]{SMT14}.

\smallskip

\begin{lemma}\label{lemma index derivative}
Let $s\geqslant 0$  and $E\in\F_s$. Then for every $i\in\ind$ and $t
\geqslant s$
\[{\pi_t}_{\#}\left(\PP_i\restr E\right)=\left({\pi_s}_{\#}
 \left(\PP_i\restr E\right)\right)\e^{-B(t-s)}.\]
\end{lemma}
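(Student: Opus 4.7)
\medskip

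\textbf{Proof plan.} Taking $j$-th components for each $j\in\ind$, the asserted identity of probability vectors is equivalent to
\[
\PP_i\big(E\cap\{\omega(t)=j\}\big)
= \sum_{k=1}^m \big(\e^{-B(t-s)}\big)_{kj}\,\PP_i\big(E\cap\{\omega(s)=k\}\big),
\qquad j\in\ind.
\]
The plan is to view both sides as finite measures in the variable $E\in\F_s$: set
\[
\mu(E) := \PP_i\big(E\cap\{\omega(t)=j\}\big),
\qquad
\nu(E) := \sum_{k=1}^m \big(\e^{-B(t-s)}\big)_{kj}\,\PP_i\big(E\cap\{\omega(s)=k\}\big),
\]
and show $\mu=\nu$ on $\F_s$. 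Since cylinders $\mathcal C(t_1,\dots,t_r;i_1,\dots,i_r)$ with $0\leqslant t_1<\cdots<t_r\leqslant s$ generate $\F_s$ and are closed under finite intersections, the uniqueness part of Carath\'eodory's theorem (equivalently, Dynkin's $\pi$-$\lambda$ theorem) reduces the task to checking the equality $\mu(E)=\nu(E)$ on this generating $\pi$-system.

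On such a cylinder $E = \mathcal C(t_1,\dots,t_r; i_1,\dots,i_r)$ I would partition over the value of $\omega(s)$, and then over $\omega(t)$, to write
\[
E\cap\{\omega(s)=k\}= \mathcal C(t_1,\dots,t_r,s;i_1,\dots,i_r,k),
\]
\[
E\cap\{\omega(s)=k\}\cap\{\omega(t)=j\}= \mathcal C(t_1,\dots,t_r,s,t;i_1,\dots,i_r,k,j).
\]
The Markov property \eqref{Markov property} for $\PP_i$, applied to the last cylinder, factors its probability as $(\e^{-B(t-s)})_{kj}$ times the probability of $\mathcal C(t_1,\dots,t_r,s;i_1,\dots,i_r,k)$. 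Summing over $k$ and using countable additivity of $\PP_i$ gives precisely $\mu(E)=\nu(E)$.

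Degenerate situations need only a quick check: when $t=s$, $\e^{-B(t-s)}$ is the identity and both sides trivially coincide; when $t_r=s$, the cylinder $E\cap\{\omega(s)=k\}$ is empty for $k\neq i_r$ and the whole sum reduces to a single term, which is again what the Markov property delivers. Thus no real obstacle is foreseen: the only non-routine insight is that the Markov property \eqref{Markov property}, which is a priori a statement about individual cylinders at fixed finite time vectors, is strong enough---through the $\pi$-$\lambda$ extension---to govern the full past $\sigma$-algebra $\F_s$. No finer probabilistic machinery is required.
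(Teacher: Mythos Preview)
Your argument is correct: the componentwise reformulation, the reduction via Dynkin's $\pi$--$\lambda$ theorem to the generating $\pi$--system of cylinders with times in $[0,s]$, and the verification on cylinders through the Markov property \eqref{Markov property} all go through cleanly, including the degenerate cases you flagged.

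There is nothing to compare against here: the paper does not prove Lemma~\ref{lemma index derivative} but simply recalls it from \cite[Lemma~3.4]{SMT14}. Your proof is the standard one and is exactly what one would expect that reference to contain.
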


\smallskip

\begin{prop}\label{preprop index derivative}
Let $s\geqslant 0$ and $i\in\ind$. Let $\mathbf g=(g_1, \cdots,
g_m)$ be an $\F_s$--measurable random variable taking values in
$\R^m$, which is, in addition, bounded in $\Omega_i$. Then
\begin{eqnarray}\label{claim1 prop index derivative}
 \EE_i \left [g_{\omega(t)}(\omega)\right ] = \EE_i\Big [  \big(\e^{-B(t-s)}\mathbf g(\omega)\big)_{\omega(s)}\Big]
 \qquad\hbox{for every  $t\geqslant s$.}
\end{eqnarray}
\end{prop}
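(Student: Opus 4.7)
The plan is to reduce the claim to the case of a simple $\F_s$-measurable random variable by an approximation argument, where the simple case follows directly from Lemma \ref{lemma index derivative} applied to the level sets of $\mathbf g$.

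First I would treat simple random variables. Suppose $\mathbf g = \sum_{k=1}^{n} \aaa_k\, \1_{E_k}$ with $\aaa_k\in\R^m$ and $(E_k)_k$ a finite $\F_s$-measurable partition of $\Omega$. Then on $E_k$ one has $g_{\omega(t)}(\omega)=(\aaa_k)_{\omega(t)}$, so
\begin{equation*}
\EE_i\left[g_{\omega(t)}(\omega)\right]
= \sum_{k=1}^n \int_{E_k}(\aaa_k)_{\omega(t)}\,\dd\PP_i
= \sum_{k=1}^n \sum_{j=1}^m (\aaa_k)_j\,{\pi_t}_\#(\PP_i\restr E_k)(j).
\end{equation*}
Now invoke Lemma \ref{lemma index derivative}: since $E_k\in\F_s$, we have ${\pi_t}_\#(\PP_i\restr E_k)={\pi_s}_\#(\PP_i\restr E_k)\,\e^{-B(t-s)}$. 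Expanding the matrix product and swapping the order of summation, the inner expression becomes $\sum_\ell (\e^{-B(t-s)}\aaa_k)_\ell\,{\pi_s}_\#(\PP_i\restr E_k)(\ell)$, which is exactly $\int_{E_k}(\e^{-B(t-s)}\aaa_k)_{\omega(s)}\,\dd\PP_i$. Summing over $k$ and using $\mathbf g(\omega)=\aaa_k$ on $E_k$ yields \eqref{claim1 prop index derivative} for simple $\mathbf g$.

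Next I would handle the general case by approximation. Since $\mathbf g$ is $\F_s$-measurable with values in $\R^m$ and bounded on $\Omega_i$, I can write $\mathbf g=\mathbf g\,\1_{\Omega_i}+\mathbf g\,\1_{\Omega\setminus\Omega_i}$, where the first term is bounded and the second is $\PP_i$-a.s. zero (since $\PP_i(\Omega\setminus\Omega_i)=0$); both summands are $\F_s$-measurable because $\Omega_i\in\F_0\subseteq\F_s$. Hence it suffices to treat bounded $\mathbf g$. A standard truncation/approximation then produces a uniformly bounded sequence $(\mathbf g_n)_n$ of simple $\F_s$-measurable random variables with $\mathbf g_n\to\mathbf g$ pointwise on $\Omega_i$. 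Apply the already-established identity to each $\mathbf g_n$:
\begin{equation*}
\EE_i\left[(g_n)_{\omega(t)}(\omega)\right] = \EE_i\Big[\big(\e^{-B(t-s)}\mathbf g_n(\omega)\big)_{\omega(s)}\Big],
\end{equation*}
and pass to the limit on both sides by dominated convergence (the integrands being uniformly bounded by $\sup_n\|\mathbf g_n\|_\infty$ times a constant depending only on $\e^{-B(t-s)}$).

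The subtle point, which is the only one not purely formal, is the interaction between $\omega(t)$ (a quantity measurable with respect to $\F_t\supsetneq\F_s$) and $\mathbf g(\omega)$ (which is $\F_s$-measurable): without an appropriate tool, the product $g_{\omega(t)}(\omega)$ could not be disentangled from $\omega(t)$. Lemma \ref{lemma index derivative} is precisely what makes the disentanglement possible, because restricting $\PP_i$ to an $\F_s$-measurable set and then pushing forward under $\pi_t$ still factors through $\pi_s$ via the semigroup $\e^{-B(t-s)}$. Once this is recognized, the remainder is routine measure-theoretic approximation.
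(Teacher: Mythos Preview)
Your proposal is correct and follows essentially the same approach as the paper: first establish \eqref{claim1 prop index derivative} for simple $\F_s$-measurable random variables via Lemma \ref{lemma index derivative}, then pass to the general case by pointwise approximation and dominated convergence. The only cosmetic difference is that you explicitly reduce to a globally bounded $\mathbf g$ by splitting off $\Omega\setminus\Omega_i$ (a $\PP_i$-null set), whereas the paper simply invokes a reference guaranteeing simple approximants that are bounded on $\Omega_i$.
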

\vspace{1ex}

\begin{proof} We first assume $\mathbf g$ to be simple, namely\ $\mathbf g =\sum_{k=1}^{l} \xi^k\,\cchi_{E_k}$\
for some $l \in \N$,
  vectors $\xi^k\in\R^m$ and $\F_s$--measurable sets
  $E_k\subset\Omega$.
By exploiting Lemma \ref{lemma index derivative}, we get
\begin{eqnarray*}
  \EE_i\left[ g_{\omega(t)}(\omega) \right]&=& \sum_k  \EE_i  [\xi^k_{\omega(t)} \,
  \cchi_{E_k}] = \sum_k \, \sum_j \xi^k_j \, (\PP_i \restr E_k)\big( \mathcal C(t;j)\big) \\
   &=& \sum_k  \left ({\pi_t}_{\#}\left(\PP_i\restr E_k \right) \right  ) \cdot \xi^k =
   \sum_k  \left ( {\pi_s}_{\#}\left(\PP_i \restr E_k\right) \, \e^{-B
   (t-s)} \right ) \cdot
   \xi^k\\ &=& \sum_k  \, \sum_j \left (\e^{-B
   (t-s)} \, \xi^k \right )_j   \, (\PP_i \restr E_k)\big( \mathcal C(s;j)\big) =  \sum_k
   \EE_i  \left [\left (\e^{-B
   (t-s)} \,
   \xi^k \right )_{\omega(s)}\, \cchi_{E_k} \right ] \\
   &= &\EE_i \left(\e^{-B(t-s)} \mathbf
   g(\omega)\right)_{\omega(s)}.
\end{eqnarray*}
This shows   the assertion  for simple random variables. For a
general $\mathbf g$, there exists, see \cite[Theorem 1.4.4, Chapter
1]{Kut13}, a sequence of $\R^m$--valued $\F_s$--measurable random
variables $\mathbf g_n$ with
\ \ $\mathbf g_n(\omega) \to \mathbf g(\omega) \ \ \hbox{for all $\omega \in
\Omega$}$\  and $\mathbf g_n$ bounded in $\Omega_i$.
 Since \eqref{claim1 prop index derivative} holds true for
$\mathbf g_n$ thanks to the first part of the proof,  we pass to the
limit on both side of the formula \eqref{claim1 prop index
derivative} exploiting the boundedness of $\mathbf g$ on $\Omega_i$
and using the Dominated  Convergence Theorem. This ends the proof.
\end{proof}

\medskip

Differentiating under the integral sign, we derive from
\eqref{claim1 prop index derivative}:

\smallskip

\begin{prop}\label{prop index derivative}
Let $s\geqslant 0$ and $i\in\ind$. Let $\mathbf g$  be an
$\F_s$--measurable random variable taking values in $\R^m$,
bounded in $\Omega_i$. Then the function \  $t \mapsto \EE_i\left[
g_{\omega(t)}(\omega)\right]$\ is differentiable in $(s,+\infty)$
and right--differentiable at $s$. Moreover
\begin{eqnarray*}
 \frac{\dd}{\dd t}\EE_i\left[ g_{\omega(t)}(\omega)\right]
 =
-\EE_i\left[\left( \e^{-B (t-s)} \, B \mathbf g
(\omega)\right)_{\omega(s)}\right]\quad\hbox{for every $t\geqslant s$},
\end{eqnarray*}
where the above formula must be understood in the sense of right
differentiability at $t=s$.
\end{prop}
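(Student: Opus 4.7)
The plan is to apply formula \eqref{claim1 prop index derivative} of Proposition \ref{preprop index derivative}, which reads
\[
\EE_i\left[g_{\omega(t)}(\omega)\right] = \EE_i\left[\big(\e^{-B(t-s)}\mathbf g(\omega)\big)_{\omega(s)}\right] \qquad(t\geqslant s),
\]
and then to differentiate the right--hand side by pushing $\dd/\dd t$ inside the expectation.

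For fixed $\omega\in\Omega_i$ set $h(t,\omega):=\big(\e^{-B(t-s)}\mathbf g(\omega)\big)_{\omega(s)}$. Since $B$ commutes with $\e^{-B\tau}$ for every $\tau\geqslant 0$, the scalar map $t\mapsto h(t,\omega)$ is smooth on $(s,+\infty)$ and right--differentiable at $s$, with
\[
\partial_t h(t,\omega)=-\big(B\,\e^{-B(t-s)}\mathbf g(\omega)\big)_{\omega(s)}=-\big(\e^{-B(t-s)}\,B\,\mathbf g(\omega)\big)_{\omega(s)}.
\]
Measurability of this $\omega$--expression is immediate, $\mathbf g$ being $\F_s$--measurable, $\e^{-B(t-s)}B$ constant, and $\omega\mapsto\pi_s(\omega)=\omega(s)$ measurable.

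The interchange of derivative and expectation is justified by the Dominated Convergence Theorem. Since $\PP_i$ is concentrated on $\Omega_i$ and $\mathbf g$ is bounded there by some $M>0$, while the entries of $\e^{-B\tau}$ and of $-B\,\e^{-B\tau}$ are continuous in $\tau$, hence uniformly bounded by a constant $K=K(B,T)$ on each $[0,T]$, the mean value theorem yields
\[
\left|\frac{h(t+\delta,\omega)-h(t,\omega)}{\delta}\right|\leqslant K\,M
\]
uniformly in $\omega\in\Omega_i$, for $t$ and $t+\delta$ both in $[s,s+T]$. Letting $\delta\to 0$ (from the right at $t=s$, from either side for $t>s$) inside the integral gives the claimed formula on $[s,s+T]$, and arbitrariness of $T$ finishes the argument. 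There is no substantial obstacle here: this is a standard differentiation under the integral sign, the only algebraic point to note being the commutation $B\,\e^{-B\tau}=\e^{-B\tau}\,B$ that produces the specific form of the conclusion.
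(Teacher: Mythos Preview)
Your proof is correct and follows exactly the approach indicated in the paper: the result is obtained from Proposition \ref{preprop index derivative} by differentiating under the integral sign, using dominated convergence and the commutation $B\,\e^{-B\tau}=\e^{-B\tau}B$. You have simply spelled out the details of what the paper states in one line.
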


\smallskip

It is worth pointing out that what  matters most in the later
application of the above result is   actually the right
differentiability of the expectations at the initial time $s$.

\end{subsection}
\end{section}

\begin{section}{Admissible curves}\label{sez admissible curves}

\begin{subsection}{Definition and basic properties}

A major role in our construction will be played by the notion of
{\em admissible curve}. In the sequel and throughout the paper, we will denote by
$\D C \left (\R_+;\R^N\right )$ the Polish space of continuous paths taking values in $\R^N$, endowed with a metric that
induces the topology of local uniform convergence in $\R_+$.

\begin{definition}\label{def admissible curve}
We call  admissible curve  a random variable $\gamma:\Omega\to\D
C\left(\R_+;\R^N\right)$ such that
\begin{itemize}
 \item[\em (i)] it  is uniformly (in $\omega$) locally  (in $t$) absolutely continuous,
 i.e. given any bounded interval $I$ and $\eps >0$,  there is $\delta_\eps >0$
 such that
 \begin{equation}\label{h1 control}
  \sum_j (b_j-a_j) < \delta_\eps  \; \Rightarrow  \; \sum_j |\gamma(b_j,\omega)- \gamma(a_j,\omega)|
  < \eps
   \end{equation}
   for any finite family  $\{(a_j,b_j)\}$ of pairwise disjoint intervals
   contained in I and for any $\omega \in \Omega$;
 \item[\em (ii)]  it is {\em nonanticipating}, i.e. for any $t \geqslant 0$
 \begin{equation}\label{h2 control}
  \omega_1\equiv\omega_2\ \hbox{in $[0,t]$}\quad\Rightarrow \gamma(\cdot,\omega_1)\equiv \gamma(\cdot,\omega_2)\ \hbox{in $[0,t]$}.\medskip
 \end{equation}
\end{itemize}
The latter condition, with $t=0$, implies that for any admissible
curve $\gamma$,
  $\gamma(0,\omega)$  is constant on $\Omega_i$, $i \in \ind$.  We refer to this value as
  the  {\em  starting point} of $\gamma$ on $\Omega_i$.

 We will say that $\gamma$ is an admissible curve starting at $x\in\R^N$ when $\gamma(0,\omega)=x$ for every $\omega\in\Omega$.
\end{definition}

\smallskip

\begin{oss} \label{oss admissible curve} Item (i) in Definition \ref{def admissible curve} is equivalent
to one of the following statements, see  \cite[Theorem 2.12]{BGH}:
\begin{itemize}
 \item[(a)] the derivatives $\dot \gamma(t,\omega)$ have locally
equi--absolutely continuous integrals, i.e. for any bounded
interval $I$ in $\R_+$ and $\eps >0$ there is $\delta>0$ such that
\[
\sup_{\omega\in\Omega}\int_J |\dot \gamma(t,\omega)| \, \dd t < \eps \qquad\hbox{for any $J \subset I$ with $|J| < \delta$;}
\]
\item[(b)] there exists a superlinear function $\Theta:\R_+\to\R$ (that can be taken convex and increasing as well) such that, for every bounded interval $I$ in $\R_+$,
\[
 \sup_{\omega\in\Omega}\int_I \Theta\big(|\dot \gamma(t,\omega)|\big) \, \dd t <+\infty.
\]
\end{itemize}
This in
particular implies that lengths of the curves $t \mapsto
\gamma(t,\omega)$ in $I$ are equi--bounded with respect to $\omega$.
Item (ii) will be crucial in the subsequent analysis and can be
equivalently rephrased by requiring that $\gamma(t,\cdot)$ is {\em
adapted}, for any $t$, to the filtration $\F_t$, meaning that
\[\gamma(t,\cdot): \Omega \to \R^N \quad\hbox{is $\F_t$--measurable
for any $t$.}\] Being the paths $s\mapsto \gamma(s,\omega)$
continuous, this is in turn equivalent to a joint--measurability
condition that will be essentially exploited in what follows. More
precisely, $\gamma$ is {\em progressively measurable}, in the sense
that  for any $t \geqslant 0$ the map
\begin{equation}\label{joint}
    \gamma:[0,t]\times\Omega\to\R^N \quad\hbox{is $\Bor([0,t])\otimes\F_t$--measurable.}\\
\end{equation}
\end{oss}

\medskip

It is understood  that in all the previous measurability conditions
$\R^N$ is equipped  with the Borel  $\sigma$--algebra corresponding
to the natural topology.

It is clear that the admissible curves make up
a vector space  with the natural sum and product by a scalar. We
 proceed by establishing some differentiability properties for this
 kind of curves.


\begin{lemma}\label{lem diff points}
For any admissible curve  $\gamma$  the set
\[ \left\{(t,\omega)\in \R_+\times\Omega\,:\,\gamma(\cdot,\omega)\ \hbox{is not
differentiable at $t$}\,\right\} \] belongs to the product
$\sigma$--algebra $\Bor(\R_+)\otimes\F$ and  has vanishing $\leb^1
\times \PP$ measure.
\end{lemma}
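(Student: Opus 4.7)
The statement splits into a measurability claim and a null-measure claim, and I would handle them in that order.

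For measurability, I would exploit the progressive measurability of $\gamma$ recorded in \eqref{joint}. Since for every $T>0$ the map $(t,\omega)\mapsto\gamma(t,\omega)$ is $\Bor([0,T])\otimes\F_T$-measurable and paths are continuous in $t$, this map is jointly $\Bor(\R_+)\otimes\F$-measurable on $\R_+\times\Omega$, and differentiability of $\gamma(\cdot,\omega)$ at a given $t$ can be tested with rational increments. Concretely, $\gamma(\cdot,\omega)$ fails to be differentiable at $t>0$ iff there exists $n\in\N$ such that for every $k\in\N$ one can find $h_1,h_2\in\Q\setminus\{0\}$ with $|h_1|,|h_2|<1/k$, $t+h_1,t+h_2\geqslant 0$ and
\[
\left|\frac{\gamma(t+h_1,\omega)-\gamma(t,\omega)}{h_1}-\frac{\gamma(t+h_2,\omega)-\gamma(t,\omega)}{h_2}\right|\geqslant\frac{1}{n}.
\]
Each such condition defines a set in $\Bor(\R_+)\otimes\F$, and taking the appropriate countable union/intersection over $n$, $k$, $h_1$, $h_2$ exhibits the exceptional set in the statement (up to the negligible slice $\{0\}\times\Omega$) as an element of $\Bor(\R_+)\otimes\F$.

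For the null-measure claim, I would invoke Fubini--Tonelli. Condition (i) in Definition \ref{def admissible curve} ensures that for \emph{every} $\omega\in\Omega$ the curve $t\mapsto\gamma(t,\omega)$ is locally absolutely continuous on $\R_+$, hence differentiable outside a set $N_\omega\subset\R_+$ with $\leb^1(N_\omega)=0$. Calling $N$ the set in the statement and applying Fubini--Tonelli to its characteristic function (legitimate because $N$ is jointly measurable by the previous paragraph) yields
\[
(\leb^1\times\PP)(N)=\int_\Omega\leb^1(N_\omega)\,\dd\PP(\omega)=0.
\]

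\textbf{Main obstacle.} There is nothing deep here: the whole difficulty is bookkeeping, namely writing non-differentiability as an explicitly Borel condition in $(t,\omega)$. Once one commits to the rational Cauchy-type test above, the joint measurability of $\gamma$ furnished by \eqref{joint} makes this automatic, and the null-measure conclusion is an immediate consequence of Fubini combined with the classical a.e.-differentiability of absolutely continuous real functions.
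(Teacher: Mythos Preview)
Your proposal is correct and follows essentially the same approach as the paper: express non-differentiability through a countable Boolean condition on difference quotients with rational increments, use the joint measurability of $\gamma$ from \eqref{joint} to conclude the set lies in $\Bor(\R_+)\otimes\F$, then apply Fubini together with the a.e.\ differentiability of absolutely continuous curves to get the null-measure claim. The only cosmetic difference is that the paper encodes non-differentiability componentwise via the upper and lower limits $D^+\gamma,\,D^-\gamma$ of the difference quotients (the non-differentiability set being $\{D^+\gamma>D^-\gamma\}$) rather than through your Cauchy-type test, but the underlying idea is identical.
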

\begin{proof} Since measurability properties of a vector valued map and those of its
components are equivalent, we can assume without loosing generality
that $m=1$. We  set for any $(t,\omega)$
\begin{eqnarray*}
  D^+ \gamma(t,\omega) &=& \limsup_{s \to 0} \frac{\gamma(t+s,\omega)-\gamma(t,\omega)}s =
  \lim_{k \to + \infty} \sup_{|s|< 1/k \atop s \in \Q \setminus \{0\}}
  \frac{\gamma(t+s,\omega)-\gamma(t,\omega)}s \\
  D^- \gamma(t,\omega) &=& \liminf_{s \to 0} \frac{\gamma(t+s,\omega)-\gamma(t,\omega)}s =
  \lim_{k \to + \infty} \inf_{|s|< 1/k \atop s \in \Q \setminus \{0\}}
  \frac{\gamma(t+s,\omega)-\gamma(t,\omega)}s.
\end{eqnarray*}
From the above formulae and the fact that the admissible curves make
up  a vector space, we derive that both $D^+ \gamma(t,\omega)$, $D^-
\gamma(t,\omega)$ are $\Bor(\R_+)\otimes\F$ measurable, then the set
in the statement belongs to $\Bor(\R_+)\otimes\F$ as well, since it
can be expressed as
\[ \{(t,\omega)\mid D^+ \gamma(t,\omega) > D^- \gamma(t,\omega)\}.\]
Moreover   its $\omega$--sections have zero Lebesgue measure, being
$\gamma(\cdot,\omega)$  an absolutely continuous  curve for a.e.
$\omega\in\Omega$. This implies that it has vanishing $\leb^1 \times
\PP$ measure, as it was claimed.
\end{proof}

\medskip

Thanks to the previous result, the map
\[
\dot\gamma: [0, + \infty) \times \Omega \to \R^N
\]
associating to any $(t,\omega)$ the derivative of
$\gamma(\cdot,\omega)$ at $t$ is well defined, up to giving an
arbitrary value on the $\leb^1 \times \PP$--null set where the
derivative does not exist. Such a map is progressively measurable, as it is clarified by the next
\smallskip

\begin{prop}\label{dotga} The map $\dot\gamma$ is $\Bor([0,t])\otimes\F_t$--measurable.
\end{prop}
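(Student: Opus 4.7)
The plan is to realize $\dot\gamma$ as the pointwise limit of measurable difference quotients built entirely from values of $\gamma$ on the strip $[0,t]\times\Omega$, so that the progressive measurability condition \eqref{joint} can be applied directly with $T=t$.

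Concretely, for each $n\in\N$ I would set
\[
D_n(s,\omega):=\begin{cases} n\big(\gamma(s+1/n,\omega)-\gamma(s,\omega)\big) & \text{if } s+1/n\leqslant t,\\ 0 & \text{otherwise}, \end{cases}
\]
for $(s,\omega)\in[0,t]\times\Omega$. The restriction $\gamma|_{[0,t]\times\Omega}$ is $\Bor([0,t])\otimes\F_t$--measurable by \eqref{joint} applied with $T=t$, and on $\{s+1/n\leqslant t\}$ the map $(s,\omega)\mapsto\gamma(s+1/n,\omega)$ is the composition of the Borel shift $s\mapsto s+1/n$ with this restriction, hence also $\Bor([0,t])\otimes\F_t$--measurable. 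Therefore each $D_n$ is measurable, and so is $\tilde D:=\liminf_n D_n$.

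To conclude I would observe that for every $s<t$ and every $\omega$ such that $\gamma(\cdot,\omega)$ is differentiable at $s$ one has $s+1/n\leqslant t$ for all $n$ sufficiently large, and $D_n(s,\omega)\to\dot\gamma(s,\omega)$. By Lemma \ref{lem diff points} the set of bad pairs has $\leb^1\times\PP$ measure zero (noting that $\{t\}\times\Omega$ is null as well), so $\tilde D=\dot\gamma$ almost everywhere. Taking $\tilde D$ as the chosen value of $\dot\gamma$ on the $\leb^1\times\PP$--null set of non--differentiability points yields the desired $\Bor([0,t])\otimes\F_t$--measurability.

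The one point that truly drives the whole construction, and the only mildly delicate step, is to avoid right difference quotients $(\gamma(s+h,\omega)-\gamma(s,\omega))/h$ with $s+h>t$: such expressions are merely $\F_{s+h}$--measurable in $\omega$, which is in general strictly larger than $\F_t$, and would therefore not provide the sought measurability. Truncating at $t$ as above sidesteps this obstruction.
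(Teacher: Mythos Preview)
Your argument is correct and follows the same overall strategy as the paper's: exhibit $\dot\gamma$ as a pointwise limit of difference quotients built from $\gamma|_{[0,t]\times\Omega}$, and appeal to \eqref{joint}. The implementation differs in one interesting respect. The paper reuses the two--sided quantities $D^+\gamma,\,D^-\gamma$ from the proof of Lemma~\ref{lem diff points} and, since those involve values $\gamma(s+h,\omega)$ with $s+h$ possibly exceeding $t$, it invokes the right--continuity of the filtration $\{\F_t\}$ to conclude. Your truncation to $\{s+1/n\leqslant t\}$ bypasses this entirely: every ingredient lives on $[0,t]\times\Omega$, so right--continuity never enters. In that sense your route is a bit more elementary. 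On the other hand, the paper's choice $D^+\gamma$ is a single globally defined representative of $\dot\gamma$, so progressive measurability for \emph{all} $t$ follows at once; your $\tilde D$ depends on the chosen $t$, so to obtain one global progressively measurable version you should either note that all your $\tilde D^{(t)}$ agree on the full--measure differentiability set, or simply take the untruncated $\liminf_n n\big(\gamma(s+1/n,\omega)-\gamma(s,\omega)\big)$ as the global representative and observe that on $[0,t)\times\Omega$ it coincides with your truncated version.
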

\begin{proof} Looking at the definition of $D^+\gamma$, $D^- \gamma$
provided in the proof of Lemma \ref{lem diff points}, and taking
into account \eqref{joint} and that the filtration $\{\F_t\}$ is
right--continuous, we see that both $D^+\gamma$, $D^- \gamma$ are
$\Bor([0,t])\otimes\F_t$ progressively measurable and that for any
fixed $t$
\[ \dot \gamma= D^+\gamma = D^- \gamma  \qquad\hbox{in $[0,t]
\times \Omega$}\] up to a set of vanishing $\leb^1 \times \PP$ measure
belonging to $\Bor([0,t])\otimes\F_t$. This gives the assertion.
\end{proof}

\smallskip

\begin{cor}\label{leb} For any $i \in \ind$, a.e. $s \in \R_+$ we have
\begin{equation}\label{leb1}
   \lim_{h \to 0^+} \EE_i \left [ \frac 1h \, \int_s^{s+h}
|\dot\gamma(t,\omega)| \, \dd t \right ] = \EE_i\Big[
|\dot\gamma(s,\omega)|\Big].
\end{equation}
\end{cor}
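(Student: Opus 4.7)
My plan is to reduce this statement to the classical Lebesgue differentiation theorem by first swapping the expectation with the integral via Fubini, thereby transferring the problem into one about an $L^1_{\mathrm{loc}}$ function of $s$ alone.

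First I would record the joint measurability. By Proposition \ref{dotga}, for each $t$ the map $\dot\gamma$ is $\Bor([0,t])\otimes\F_t$--measurable, and since $\Bor([0,t])\otimes\F_t\subset \Bor(\R_+)\otimes\F$, the function $|\dot\gamma|:\R_+\times\Omega\to\R_+$ is jointly measurable. Next I would verify local integrability with respect to the product measure on any bounded interval $I\subset\R_+$: condition (a) of Remark \ref{oss admissible curve} gives $\sup_{\omega}\int_I |\dot\gamma(t,\omega)|\,\dd t<+\infty$, so in particular $\int_I\EE_i|\dot\gamma(t,\omega)|\,\dd t<+\infty$ by Tonelli. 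Hence the function
\[
\varphi_i(t):=\EE_i\big[|\dot\gamma(t,\omega)|\big]
\]
is well defined for a.e.\ $t$ and belongs to $L^1_{\mathrm{loc}}(\R_+)$.

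Next I would apply Fubini--Tonelli: for every $s\geqslant 0$ and $h>0$,
\[
\EE_i\!\left[\frac{1}{h}\int_s^{s+h}|\dot\gamma(t,\omega)|\,\dd t\right]
=\frac{1}{h}\int_s^{s+h}\varphi_i(t)\,\dd t.
\]
Finally, I would invoke the Lebesgue differentiation theorem for $\varphi_i\in L^1_{\mathrm{loc}}(\R_+)$, which yields
\[
\lim_{h\to 0^+}\frac{1}{h}\int_s^{s+h}\varphi_i(t)\,\dd t=\varphi_i(s)\qquad\text{for a.e. } s\in\R_+,
\]
and combining the two displays gives the claimed identity.

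There is no serious obstacle here; the only delicate point is making sure joint measurability is in place so that Fubini is legitimate, and this is precisely what Proposition \ref{dotga} provides. The finiteness of $\varphi_i$ on bounded intervals is immediate from the uniform local absolute continuity built into the definition of admissible curve.
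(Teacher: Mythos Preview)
Your proof is correct and follows essentially the same route as the paper: use the joint measurability from Proposition \ref{dotga} together with the uniform local integrability of $|\dot\gamma|$ (Remark \ref{oss admissible curve}) to apply Fubini, conclude that $t\mapsto\EE_i|\dot\gamma(t,\omega)|$ is in $L^1_{\mathrm{loc}}(\R_+)$, and then invoke the Lebesgue differentiation theorem.
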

\begin{proof}
Due to the joint measurability property proved in the previous
proposition, we get for any $T > 0$ via Fubini's Theorem
\[ \int_0^T \EE_i  |\dot\gamma(t,\omega)| \, \dd t =
\EE_i \left [ \int_0^T  |\dot\gamma(t,\omega)| \, \dd t \right ] \]
and the integral in the right--hand side is finite because
\ $\int_0^T  |\dot\gamma(t,\omega)| \, \dd t$\
is bounded uniformly in $\omega$, see Remark \ref{oss admissible
curve}. This implies that the function
\ $t \mapsto \EE_i\Big[
|\dot\gamma(s,\omega)|\Big]$\
is locally summable in $\R_+$, so that by Lebesgue Differentiation
Theorem adapted to the Lebesgue measure, namely not requiring the
shrinking neighborhoods of a given time $s$ to be centered at $s$,
we get
\begin{equation}\label{leb2}
  \lim_{h \to 0^+} \frac 1h \,   \int_s^{s+h} \EE_i \big[ | \dot\gamma(t,\omega)|\big] \, \dd
t = \EE_i\big[ |\dot\gamma(s,\omega)|\big] \quad\hbox{for a.e.
$s\in\R_+$,}
\end{equation}
 again applying Fubini Theorem we have
 \begin{equation}\label{leb3}
    \EE_i \left [ \frac 1h \, \int_s^{s+h}
|\dot\gamma(t,\omega)| \, \dd t \right ]  = \frac 1h \, \int_s^{s+h}
\EE_i | \dot\gamma(t,\omega)| \, \dd t.
\end{equation}
The assertion is a direct consequence of \eqref{leb2}, \eqref{leb3}.
\end{proof}

\end{subsection}

\begin{subsection}{Lipschitz continuous functions and admissible
curves}

\medskip

We proceed by studying the behavior of a Lipschitz continuous function
on an admissible curve.
The first result is

 \begin{prop}\label{absolute} Let $\uu:\R_+ \times \R^N\to\R^m$ be a locally Lipschitz function  and $\gamma$ an admissible curve. For every index $i\in\ind$, the function
\[ t \mapsto \EE_i \big[u_{\omega(t)}\big(t,\gamma(t,\omega)\big) \big] \]
is locally absolutely continuous  in  $\R_+$.
\end{prop}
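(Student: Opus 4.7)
The plan is to fix an arbitrary bounded interval $[0,T]$ and show absolute continuity there. The first observation is that, thanks to Remark \ref{oss admissible curve}, the curves $t\mapsto\gamma(t,\omega)$ are equi-bounded on $[0,T]$ uniformly in $\omega$ (they start from at most $m$ points, given by the $\gamma(0,\cdot)$ restricted to $\Omega_i$, and have equi-bounded lengths on $[0,T]$). So there exists a compact set $K\subset\R^N$ such that $\gamma([0,T]\times\Omega)\subset K$. On the compact set $[0,T]\times K$ the function $\uu$ is Lipschitz with some constant $\kappa$ and bounded by some constant $M$.

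The core of the proof is a splitting of the increment. For $0\leqslant t_1<t_2\leqslant T$, write
\[
F(t_2)-F(t_1)=A(t_1,t_2)+J(t_1,t_2),
\]
where
\begin{align*}
A(t_1,t_2)&:=\EE_i\Big[u_{\omega(t_2)}\bigl(t_2,\gamma(t_2,\omega)\bigr)-u_{\omega(t_2)}\bigl(t_1,\gamma(t_1,\omega)\bigr)\Big],\\
J(t_1,t_2)&:=\EE_i\Big[u_{\omega(t_2)}\bigl(t_1,\gamma(t_1,\omega)\bigr)-u_{\omega(t_1)}\bigl(t_1,\gamma(t_1,\omega)\bigr)\Big].
\end{align*}
The term $A$ carries the variation coming from time and space (the index being frozen at $\omega(t_2)$), while $J$ carries the effect of a jump of the index between $t_1$ and $t_2$.

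For $A$ I would use the Lipschitz bound: $|A(t_1,t_2)|\leqslant\kappa(t_2-t_1)+\kappa\,\EE_i|\gamma(t_2,\omega)-\gamma(t_1,\omega)|$. By absolute continuity of the paths and the joint measurability of $\dot\gamma$ from Proposition \ref{dotga}, Fubini gives $\EE_i|\gamma(t_2,\omega)-\gamma(t_1,\omega)|\leqslant \int_{t_1}^{t_2}\EE_i|\dot\gamma(s,\omega)|\,\dd s$; and the argument in the proof of Corollary \ref{leb} shows that $s\mapsto\EE_i|\dot\gamma(s,\omega)|$ is summable on $[0,T]$. For $J$, the crucial tool is Lemma \ref{differ}: since the integrand vanishes on the event $\{\omega(t_1)=\omega(t_2)\}$ and is bounded by $2M$ elsewhere,
\[
|J(t_1,t_2)|\leqslant 2M\,\PP_i\bigl(\omega(t_1)\neq\omega(t_2)\bigr)\leqslant 2MC\,(t_2-t_1).
\]

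Summing over a finite family of pairwise disjoint intervals $\{(a_j,b_j)\}\subset[0,T]$ with $\sum_j(b_j-a_j)<\delta$ yields
\[
\sum_j |F(b_j)-F(a_j)|\leqslant(\kappa+2MC)\delta+\kappa\int_{\bigcup_j(a_j,b_j)}\EE_i|\dot\gamma(s,\omega)|\,\dd s.
\]
The first summand is controlled by choosing $\delta$ small, and the second by absolute continuity of the Lebesgue integral, exploiting the integrability of $s\mapsto\EE_i|\dot\gamma(s,\omega)|$. This delivers the required local absolute continuity. The only genuinely non-routine step is the estimate on $J$, where one must resist differentiating the composition $u_{\omega(t)}\bigl(t,\gamma(t,\omega)\bigr)$ in the naive way and instead isolate the index jumps and invoke Lemma \ref{differ}; this is where the probabilistic structure of the Markov chain enters.
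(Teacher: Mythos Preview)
Your proof is correct and follows essentially the same approach as the paper: split the increment into a ``frozen-index'' part and an ``index-jump'' part, then control the latter via Lemma~\ref{differ} and the former via the local Lipschitz continuity of $\uu$ together with the equi-absolute continuity of the admissible curve. The only cosmetic difference is that for the frozen-index piece the paper invokes directly the equi-absolute continuity of $t\mapsto u_k\big(t,\gamma(t,\omega)\big)$ (from item (i) of Definition~\ref{def admissible curve}), whereas you pass through the explicit Lipschitz estimate and the integrability of $s\mapsto\EE_i|\dot\gamma(s,\omega)|$; both routes yield the same conclusion.
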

\begin{proof}  We denote by $f$ the function in object. We fix an
index $i$ and $\eps > 0$.  We consider a bounded interval $I$ and a
finite family of pairwise disjoint intervals $\{(a_j,b_j)\}$
contained in $I$.

Taking into account that $\gamma(0,\omega)$ is constant in
$\Omega_i$ and item (i) in Definition \ref{def admissible curve}, we
see that the curve $\gamma$ lies in a given bounded set $B$ for $t
\in I$ and $\omega \in \Omega_i$. We denote by $R$ an upper bound of
$\uu$ in $I \times B$.  Owing to item (i) in  Definition \ref{def
admissible curve} and  the fact that $\uu$ is locally Lipschitz
continuous, the functions $t \mapsto
u_k\big(t,\gamma(t,\omega)\big)$ are equi--absolutely continuous in
$I$, for $k \in \ind$, $\omega \in \Omega$.  We can therefore
determine a positive constant $\delta$ with
\begin{eqnarray}
   \sum_j(b_j-a_j) <  \delta \; &\Rightarrow& \;  \sum_j \big|u_k\big(b_j,\gamma(b_j,\omega)\big) - u_k\big(a_j,
  \gamma(a_j,\omega)\big)\big| < \frac \eps 2  \label{abso1}\\
  \delta  &<&  \frac 1{2 \, R \,  C }, \label{abso2}
\end{eqnarray}
where $C$ is the constant appearing in the statement of Lemma
\ref{differ}. We claim that
\begin{equation}\label{abso2bis}
\sum_j(b_j-a_j) <  \delta \; \Rightarrow  \; \sum_j|f(b_j) - f(a_j)|
< \eps.
\end{equation}
We set
\[F_j= \{  \omega\mid \omega(a_j) \neq \omega(b_j)\}.\]
We know from Lemma \ref{differ} that
\begin{equation}\label{abso3}
   \PP_i(F_j) \leqslant C \, (b_j-a_j) \qquad\hbox{for any $j$.}
\end{equation}
We have
\begin{eqnarray*}
  \sum_j|f(b_j) - f(a_j)| &\leqslant & \sum_j \EE_i \,
\big[\big|u_{\omega(b_j)}\big(b_j,\gamma(b_j,\omega)\big)-u_{\omega(a_j)}\big(a_j,\gamma(a_j,\omega)\big)\big|
\big ] \\ &\leqslant & \sum_j \int_{\Omega \setminus F_j}
\big[\big|u_{\omega(b_j)}\big(b_j,\gamma(b_j,\omega)\big)-u_{\omega(b_j)}\big(a_j,\gamma(a_j,\omega)\big)\big|
\big] \, \dd \PP_i \\ &+& \int_{
F_j}\big[\big|u_{\omega(b_j)}\big(b_j,\gamma(b_j,\omega)\big)-u_{\omega(a_j)}\big(a_j,\gamma(a_j,\omega)\big)
\big| \big]\, \dd \PP_i
\end{eqnarray*}
 and we conclude, recalling the role of $R$ and \eqref{abso1},
 \eqref{abso2}, \eqref{abso3}
\[\sum_j|f(b_j) - f(a_j)|  <  \frac \eps 2 + 2 \, R \,
C \, \sum_j(b_j-a_j) < \frac \eps 2 +\frac \eps 2 = \eps .\] This
proves \eqref{abso2bis} and concludes the proof.
\end{proof}

\medskip

We go on  proving  that time derivative of a locally Lipschitz
continuous function on an admissible curve  and expectations $\EE_i$
commute, up to a term which, roughly speaking, records the indices
jumps on the underlying paths and contains the coupling matrix.

To comment on it, let us take for simplicity $\gamma$ deterministic
and $\uu$, $\gamma$ both of class $\D{C}^1$. By linearity the
difference quotient of $t \mapsto \EE_i\big[u_{\omega(t)}\big(t,\gamma(t)\big) \big]
$ is given by
\[ \EE_i \Big[ \frac{u_{\omega(t+h)}\big(t+h,\gamma(t+h)\big)-u_{\omega(t)}\big(t,\gamma(t)\big)}h  \Big
].
\]
Owing to right continuity of $\omega$,  the integrand
$\omega$--pointwise converges  to the time derivative of
$u_{\omega(t)}$ on $\gamma$ at $t$  but, due to indices jumps,  it is not bounded in $\Omega$ so
that the Dominated Convergence Theorem cannot be applied to get
the corresponding convergence of  expectations. In this framework the
extra term with the coupling matrix pops up.

This is the main output of the section and    will be exploited to prove
some properties of the Lax--Oleinik formula in Section
\ref{LO}.

\medskip

\begin{teorema}\label{teo key}
Let $\uu:\R_+\times\R^N\to\R^m$ be a locally Lipschitz function and  $\gamma$ an admissible curve. Then, for every index $i\in\ind$, we have
\begin{eqnarray}\label{key00}
    \frac{\dd}{\dd t}\EE_i  \big[u_{\omega(t)}\big(t,\gamma(t,\omega)\big)\big ]_{\mbox{\Large
$|$}_{t=s}} = \EE_i \Big
[-\big(B\uu\big)_{\omega(s)}\big(s,\gamma(s,\omega)\big)  + \frac{\dd}{\dd t}
u_{\omega(s)}\big(t,\gamma(t,\omega)\big)_{\mbox{\Large $|$}_{t=s}}\Big ]
\end{eqnarray}
 for
a.e. $s \in \R_+$.
\end{teorema}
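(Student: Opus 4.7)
\medskip

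\textbf{Proof plan.} The natural move is to decompose the difference quotient of the function $f(t):=\EE_i\big[u_{\omega(t)}(t,\gamma(t,\omega))\big]$ (which is a.c. by Proposition \ref{absolute}) into an ``index--jump'' piece and a ``trajectory'' piece via the telescoping
\[
\frac{f(s+h)-f(s)}{h}=\frac{1}{h}\EE_i\Big[u_{\omega(s+h)}\big(s,\gamma(s,\omega)\big)-u_{\omega(s)}\big(s,\gamma(s,\omega)\big)\Big]+\frac{1}{h}\EE_i\Big[u_{\omega(s+h)}\big(s+h,\gamma(s+h,\omega)\big)-u_{\omega(s+h)}\big(s,\gamma(s,\omega)\big)\Big].
\]
The first term is handled immediately by Proposition \ref{prop index derivative}: the random vector $\mathbf g(\omega):=\uu\big(s,\gamma(s,\omega)\big)$ is $\F_s$--measurable (because $\gamma(s,\cdot)$ is, by the nonanticipating property and item (ii) of Remark \ref{oss admissible curve}) and bounded in $\Omega_i$ (by uniform absolute continuity of $\gamma$ and local boundedness of $\uu$); its right--derivative at $t=s$ is therefore $-\EE_i\big[(B\uu)_{\omega(s)}(s,\gamma(s,\omega))\big]$, which is precisely the coupling term in \eqref{key00}.

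For the second term I split according to the jump set $F_h:=\{\omega\mid \omega(s)\neq\omega(s+h)\}$. On $F_h$, Lipschitz continuity of $\uu$ on the bounded set containing the trajectory yields
\[
\Big|u_{\omega(s+h)}\big(s+h,\gamma(s+h,\omega)\big)-u_{\omega(s+h)}\big(s,\gamma(s,\omega)\big)\Big|\leqslant L\,h+L\int_s^{s+h}|\dot\gamma(t,\omega)|\,\dd t,
\]
and combining Lemma \ref{differ} (giving $\PP_i(F_h)\leqslant Ch$) with the uniform absolute continuity of $\gamma$ (yielding $\sup_\omega\int_s^{s+h}|\dot\gamma|\,\dd t =o(1)$) shows that the $F_h$--contribution, after division by $h$, tends to $0$. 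On $\Omega\setminus F_h$, the index is frozen at $\omega(s)$, and for $\PP_i$--a.e.\ $\omega$ the integrand converges to $\psi(\omega):=\frac{\dd}{\dd t}u_{\omega(s)}\big(t,\gamma(t,\omega)\big)_{|t=s}$, which is defined for a.e.\ $s$ by a Fubini argument (since for each fixed $\omega$ the absolutely continuous map $t\mapsto u_j(t,\gamma(t,\omega))$ is differentiable a.e.).

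The crux is the passage to the limit in expectation on $\Omega\setminus F_h$, since Lipschitzianity controls the quotient only by $L\big(1+\rho_h(\omega)\big)$ with $\rho_h(\omega):=h^{-1}|\gamma(s+h,\omega)-\gamma(s,\omega)|$, which is not a priori dominated. This is where Corollary \ref{leb} does the essential work: for a.e.\ $s$ one has both $\rho_h\to|\dot\gamma(s,\cdot)|$ pointwise $\PP_i$--a.e.\ and $\EE_i[\rho_h]\to\EE_i[|\dot\gamma(s,\cdot)|]$, so Scheff\'e's Lemma forces $L^1(\PP_i)$--convergence and hence uniform integrability of $\{\rho_h\}$. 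Vitali's convergence theorem then yields
\[
\lim_{h\to 0^+}\frac{1}{h}\EE_i\Big[u_{\omega(s+h)}\big(s+h,\gamma(s+h,\omega)\big)-u_{\omega(s+h)}\big(s,\gamma(s,\omega)\big)\Big]=\EE_i[\psi].
\]
Adding the two limits gives the right--derivative of $f$ at $s$. Since $f$ is a.c., this coincides with $f'(s)$ at a.e.\ $s$, establishing \eqref{key00}. The main obstacle is exactly the absence of a uniform pointwise bound on $\rho_h$; overcoming it by the Scheff\'e/Vitali combination driven by Corollary \ref{leb} is the delicate step, while the jump contribution is controlled in a routine manner by Lemma \ref{differ}.
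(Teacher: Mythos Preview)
Your proof is correct and follows the same architecture as the paper's: the identical telescoping decomposition, Proposition~\ref{prop index derivative} for the index--jump term, and Corollary~\ref{leb} to justify passing to the limit in the trajectory term. The differences are cosmetic: the paper does not split on $F_h$ but simply observes that right--continuity of $\omega$ gives $\omega(s+h)=\omega(s)$ pointwise for small $h$, and then invokes the generalized Dominated Convergence Theorem (with dominating sequence $1+\tfrac{\kappa}{h}\int_s^{s+h}|\dot\gamma|\,\dd t$, whose expectations converge by Corollary~\ref{leb}) rather than your Scheff\'e/Vitali route; note, incidentally, that Corollary~\ref{leb} literally controls $\tfrac{1}{h}\int_s^{s+h}|\dot\gamma|\,\dd t$, not your $\rho_h$, so either dominate by the former directly or add a one--line Fatou argument to get $\EE_i[\rho_h]\to\EE_i[|\dot\gamma(s,\cdot)|]$. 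For the last step the paper appeals to Denjoy--Young--Saks (Corollary~\ref{DYS}); your use of the absolute continuity of $f$ from Proposition~\ref{absolute} is a cleaner way to pass from the a.e.\ right--derivative to the a.e.\ two--sided derivative.
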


\smallskip

For the proof we need some preliminary material. We consider the map
\[(t,\omega) \mapsto \uu\big(t,\gamma(t,\omega)\big),\]
for some admissible curve $\gamma$. Thanks to the fact that $\uu$ is
(Lipschitz) continuous and $\gamma$ jointly measurable, we derive
that such map is also measurable from $\Bor(\R_+) \otimes \F$ to
$\Bor(\R^m)$. We can therefore argue as in Lemma \ref{lem diff
points} to get:

 \smallskip

 \begin{lemma}\label{lem diff points bis}
 For any locally Lipschitz continuous function $\uu: \R_+ \times \R^N
\to \R^m$ and any admissible curve $\gamma$ the set
\[ \left\{(t,\omega) \mid t \mapsto \uu\big(t,\gamma(t,\omega)\big) \; \hbox{is not
differentiable at $t$}\,\right\} \] belongs to the product
$\sigma$--algebra $\Bor(\R_+)\otimes\F$ and  has vanishing $\leb^1
\times \PP$ measure.
\end{lemma}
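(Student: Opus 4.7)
The plan is to mirror essentially verbatim the argument used to establish Lemma \ref{lem diff points}, with the map $\gamma$ replaced by the composition $F(t,\omega):=\uu\big(t,\gamma(t,\omega)\big)$. First I would reduce to the scalar case $m=1$: since measurability and a.e.\ differentiability of a vector-valued map are equivalent to the same properties for each scalar component, it suffices to prove the statement for a single locally Lipschitz function $u:\R_+\times\R^N\to\R$.

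Next I would address joint measurability of $F$. The excerpt (just before the statement) already observes that $F$ is $\Bor(\R_+)\otimes\F$-measurable as a composition of the continuous $u$ with the jointly measurable map $(t,\omega)\mapsto\big(t,\gamma(t,\omega)\big)$ coming from \eqref{joint}. Then, exactly as in the proof of Lemma \ref{lem diff points}, I would introduce the upper and lower Dini derivatives
\[
D^+F(t,\omega)=\lim_{k\to+\infty}\sup_{\substack{0<|s|<1/k\\ s\in\Q}}\frac{F(t+s,\omega)-F(t,\omega)}{s},\qquad D^-F(t,\omega)=\lim_{k\to+\infty}\inf_{\substack{0<|s|<1/k\\ s\in\Q}}\frac{F(t+s,\omega)-F(t,\omega)}{s},
\]
which are $\Bor(\R_+)\otimes\F$-measurable because they are countable combinations of measurable difference quotients. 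The non-differentiability set is then
\[
\{(t,\omega)\mid D^+F(t,\omega)>D^-F(t,\omega)\}\cup\{(t,\omega)\mid D^+F(t,\omega)=\pm\infty\},
\]
which is measurable.

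The remaining point is to show that this set has vanishing $\leb^1\times\PP$ measure, and by Fubini it suffices to show that each $\omega$-section has zero Lebesgue measure. Fix $\omega\in\Omega$: by item (i) of Definition \ref{def admissible curve}, the curve $t\mapsto\gamma(t,\omega)$ is locally absolutely continuous, and in particular it stays in a compact set $K$ on any bounded interval $I$. On $I\times K$ the function $u$ is Lipschitz continuous with some constant $L$, and therefore
\[
\big|F(t,\omega)-F(s,\omega)\big|\leqslant L\,\big(|t-s|+|\gamma(t,\omega)-\gamma(s,\omega)|\big)\qquad\text{for }s,t\in I,
\]
which immediately shows that $F(\cdot,\omega)$ inherits local absolute continuity from $\gamma(\cdot,\omega)$. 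By the classical a.e.\ differentiability of absolutely continuous functions, the $\omega$-section of the non-differentiability set is $\leb^1$-null. An application of Fubini's theorem then gives the conclusion.

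The only potentially delicate point is the measurability step, but this is already essentially handled in the proof of Lemma \ref{lem diff points}; the rest is a direct transposition. No new ideas beyond the Lipschitz-preservation of absolute continuity are needed.
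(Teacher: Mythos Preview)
Your proposal is correct and follows essentially the same approach as the paper: the paper simply remarks that $(t,\omega)\mapsto\uu\big(t,\gamma(t,\omega)\big)$ is $\Bor(\R_+)\otimes\F$--measurable and then says one argues as in Lemma~\ref{lem diff points}. You have spelled out the only extra detail needed in this transposition, namely that local Lipschitz continuity of $u$ together with local absolute continuity of $\gamma(\cdot,\omega)$ forces $F(\cdot,\omega)$ to be locally absolutely continuous, which is exactly what replaces the bare absolute continuity of $\gamma(\cdot,\omega)$ used in the $\omega$--section argument of Lemma~\ref{lem diff points}.
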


\medskip

\begin{proof}[Proof of Theorem \ref {teo key}] \;
  The difference quotient of
 $ t \mapsto \EE_i  \big[u_{\omega(t)}\big(t,\gamma(t)\big)\big]$ at  $s$ is equal
 to $\EE_i \left [ \psi_h(\omega)+ \varphi_h(\omega) \right ]$, with
\begin{eqnarray*}
  \psi_h(\omega) &:=& \frac{u_{\omega(s+h)}\big(s,\gamma(s,\omega)\big)-u_{\omega(s)}\big(s,\gamma(s,\omega)\big)}{h} \\
  \varphi_h(\omega) &:=& \frac{u_{\omega(s+h)}\big(s+h,\gamma(s+h,\omega)\big)
 -u_{\omega(s+h)}\big(s,\gamma(s,\omega)\big)}{h}.
\end{eqnarray*}
Due to right continuity of $\omega$, we further have
\begin{equation}\label{key2}
   \varphi_h(\omega)=\frac{u_{\omega(s)}\big(s+h,\gamma(s+h,
 \omega)\big)-u_{\omega(s)}\big(s,\gamma(s,\omega)\big)}{h},
\end{equation}
for $h >0$ small enough, with smallness depending on $\omega$.

Keeping $s$ frozen, we apply Proposition \ref{prop index derivative}
to $\mathbf g(\omega)= \uu\big(s, \gamma(s, \omega)\big)$, to
 get
\begin{eqnarray}\label{key002}
 \lim_{h\to 0^+} \EE_i \big [ \psi_h(\omega) \big ] =
 \frac{\dd^+}{\dd \hbox{\em t}}\left(\EE_i \big[ u_{\omega(t)}\big(s,\gamma(s,\omega)\big)\big]
 \right )_{\mbox{\Large $|$}_{t=s}}
 =
 -\EE_i \big [ (B\uu\big)_{\omega(s)}\big(s,\gamma(s,\omega)\big) \big ],
\end{eqnarray}
where  the symbol $\frac{\dd^+}{\dd \hbox{\em t}}$ stands for the
right derivative. The assumptions in  Proposition \ref{prop index
derivative} are actually satisfied: in fact  $\omega \mapsto
\uu\big(s, \gamma(s, \omega)\big)$ is bounded in $\Omega_i$ because
of  item (i) in  Definition \ref{def admissible curve}  and the fact
that $\gamma$ has constant value on $\Omega_i$ at $t=0$. It is in
addition $\F_s$--measurable because $\uu$ is continuous and $\gamma$
adapted to the filtration $\{\F_t\}$.

 To handle the  term $\varphi_h$, we
restrict the choice of $s$.  By Lemma  \ref{lem diff points bis}, we
know that the set
\[
N:=\left\{(t,\omega) \mid t \mapsto \uu\big(t,\gamma(t,\omega)\big) \;
\hbox{is not differentiable at $t$}\,\right\}
\]
 has  vanishing  $\leb^1 \times \PP$ measure, and  consequently its
 $s$--sections $N_s$ have vanishing probability for $s$ varying in a set $J$ of full measure in $\R_+$.
 We therefore deduce from \eqref{key2} that
\begin{equation}\label{key10}
    \varphi_h(\omega)\cv{h\to 0^+} \frac \dd{\dd t}
u_{\omega(s)}\big(t,\gamma(t,\omega)\big)_{\mbox{\Large $|$}_{t=s}}
\qquad\hbox{for $s \in J$,  $\omega \in \Omega \setminus N_s$.}
\end{equation}
Due to Corollary \ref{leb}, we can assume, without any loss to
generality, that the $s \in J$ also satisfy the limit relation
\eqref{leb1}. We have for  $h \leqslant 1$
\begin{eqnarray}
  |\varphi_h(\omega)| &=& \Big | \frac{u_{\omega(s+h)}\big(s+h,\gamma(s+h,\omega)\big)
 -u_{\omega(s+h)}\big(s,\gamma(s,\omega)\big)}{h} \Big | \label{key1} \\
 &\leqslant&  1 + \frac 1h \, \kappa \, \big (  \big|\gamma(s+h ,\omega)-
 \gamma(s,\omega) \big| \big ) \leqslant 1 + \frac 1h \, \kappa \, \left (
 \int_s^{s+h} |\dot\gamma(t,\omega)| \, \dd t \right ) \nonumber
\end{eqnarray}
where $\kappa$ a Lipschitz constant for $\uu$ in
 $[s,s+1] \times B$, and $B$ stands for  the bounded set containing the curves $\gamma(\cdot,\omega)$,
 for $t \in [s,s+1]$, as $\omega$ varies in $\Omega$,  see item (i) in Definition
\ref{def admissible curve}.
In addition, by \eqref{leb1}
\begin{equation}\label{key3}
  \EE_i \left [ \frac 1h \, \int_s^{s+h} |\dot\gamma(t,\omega)| \, \dd
t \right ] \qquad\hbox{is convergent as $h \to 0^+$. }
\end{equation}
 Therefore, when $s \in J$,
the sequence $\varphi_h$ is a.e. pointwise convergent thanks to
\eqref{key10}, and dominated  by another sequence with convergent
$\EE_i$ expectation in force of \eqref{key1}, \eqref{key3}. This
allows using the variant of Dominated Convergence Theorem (see for
instance \cite[Theorem 4, Chapter 1.3]{EG}) to get
\begin{equation}\label{key003}
    \lim_{h \to 0^+} \EE_i \left [\varphi_h(\omega) \right ] = \EE_i \left
[\frac \dd{\dd t}
u_{\omega(s)}\big(t,\gamma(t,\omega)\big)_{\mbox{\Large
$|$}_{t=s}}\right ] \quad\hbox{for $s \in J$.}
\end{equation}
Owing to \eqref{key002}, \eqref{key003}, the function
\[  s \mapsto \EE_i  \big[u_{\omega(s)}\big(s,\gamma(s,\omega)\big) \big] = \EE_i [\psi_h(\omega)+ \varphi_h(\omega)]\]
is a.e. right--differentiable in $\R_+$, and  so a.e. differentiable
in view of  Denjoy--Young--Saks Theorem, see Corollary \ref{DYS}.
Formula \eqref{key00} directly comes from \eqref{key002},
\eqref{key003}.
\end{proof}

\medskip

Given the set
\begin{eqnarray}
   \left\{(t,\omega)\in (0,+\infty)\times\Omega \mid t \mapsto \uu(t,\gamma\big(t,\omega)\big)\ \hbox{and}\  t \mapsto \gamma(t,\omega)\; \hbox{are not
differentiable at $t$}\,\right\},  \label{key01}
\end{eqnarray}
we denote by $J$ the set of points $s>0$ such that the $s$--section of the set in
\eqref{key01} has probability $0$ and \eqref{leb1}
holds at $s$. Note that  $J$ has full  measure in $\R_+$ because of Lemma \ref{lem diff points},
Lemma \ref{lem diff points bis} and Corollary \ref{leb}.

 \smallskip

 \begin{lemma}\label{new} Let $\uu$, $\gamma$ be as in Theorem \ref{teo key}
 and let $s \in J$. The compact--valued map
 \[Z(\omega)= \left \{(r,p) \in \partial^C u_{\omega(s)}\big(s, \gamma(s,\omega)\big) \mid r+\langle p,
 \dot\gamma(s,\omega) \rangle = \frac \dd{\dd t} u_\omega(s)\big(s,\gamma(s,\omega)\big) \right \}\]
 is measurable.
 \end{lemma}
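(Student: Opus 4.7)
The plan is to realize $Z(\omega)$ as the intersection of a measurable compact--convex--valued map with an $\omega$--dependent affine hyperplane, and to conclude via a measurable minimum argument. As a preliminary step, I would check that the following maps are all $\F$--measurable: $\omega \mapsto \omega(s)$ (by the very definition of $\F_s$), $\omega \mapsto \gamma(s,\omega)$ (by admissibility of $\gamma$, which yields $\F_s$--measurability), $\omega \mapsto \dot\gamma(s,\omega)$ (by Proposition \ref{dotga}), and
\[c(\omega) := \frac{\dd}{\dd t}\,u_{\omega(s)}\big(t,\gamma(t,\omega)\big)\ats,\]
which, on the full--probability subset of $\Omega$ where this derivative exists (guaranteed for $s \in J$), is a pointwise limit of measurable difference quotients along a rational sequence $h \to 0^+$; I would extend it arbitrarily on the null complement.

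Next, I would show that $\Psi(\omega) := \partial^C u_{\omega(s)}\big(s,\gamma(s,\omega)\big)$ is a measurable compact--convex--valued map. By Proposition \ref{convexconvex}, each $(t,x) \mapsto \partial^C u_i(t,x)$ is upper semicontinuous and hence measurable in the sense of Definition \ref{uscmea}. Partitioning $\Omega$ by the finitely many values of $\omega(s)$, for any closed $E \subset \R \times \R^N$ one obtains
\[\Psi^{-1}(E) = \bigcup_{i=1}^m \Big(\{\omega(s)=i\} \cap \gamma(s,\cdot)^{-1}\big(\{x \in \R^N : \partial^C u_i(s,x) \cap E \neq \emptyset\}\big)\Big) \in \F_s \subseteq \F.\]
For any fixed closed $E$, the set--valued map $\omega \mapsto \Psi(\omega) \cap E$ then remains measurable with compact values, thanks to the identity $(\Psi \cap E)^{-1}(F) = \Psi^{-1}(E \cap F)$ valid for every closed $F$.

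The crucial step is to incorporate the affine constraint $\phi(\omega, r, p) := r + \langle p, \dot\gamma(s,\omega)\rangle - c(\omega) = 0$. Using a Castaing representation (a standard refinement of Theorem \ref{CastaingValadier}) of the measurable compact--valued map $\omega \mapsto \Psi(\omega) \cap E$, I would obtain a countable family $(\sigma_n)_n$ of measurable selections such that $\Psi(\omega) \cap E = \overline{\{\sigma_n(\omega)\}_n}$ whenever the former is non--empty. Since $\phi$ is $\F$--measurable in $\omega$ and continuous in $(r,p)$, density gives
\[\min\big\{|\phi(\omega, r, p)| \;:\; (r, p) \in \Psi(\omega) \cap E\big\} = \inf_n\,\big|\phi(\omega, \sigma_n(\omega))\big|,\]
which is $\F$--measurable as an infimum of countably many measurable functions. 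By continuity of $\phi(\omega, \cdot)$ and compactness of $\Psi(\omega) \cap E$, this infimum vanishes exactly when $Z(\omega) \cap E \neq \emptyset$; hence $Z^{-1}(E) \in \F$, which proves the claimed measurability.

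The main obstacle is precisely this last step, namely controlling the intersection of a measurable compact--valued map with a measurable closed--valued ``hyperplane'' whose coefficients depend measurably on $\omega$ --- the passage through a Castaing representation is the standard device for handling such intersections. Non--emptiness of $Z(\omega)$ for $\omega$ outside a $\PP$--null set, which makes $Z$ genuinely compact--valued, is ensured by Lemma \ref{clarke} applied to the function $(t,x) \mapsto u_{\omega(s)}(t,x)$ along the locally absolutely continuous curve $t \mapsto (t,\gamma(t,\omega))$, whose derivative at $s$ equals $(1, \dot\gamma(s,\omega))$.
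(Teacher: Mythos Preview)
Your argument is correct. The preliminary measurability checks---for $\omega(s)$, $\gamma(s,\omega)$, $\dot\gamma(s,\omega)$, $c(\omega)$, and $\Psi(\omega)=\partial^C u_{\omega(s)}\big(s,\gamma(s,\omega)\big)$---coincide with what the paper does. The final step, however, is organized differently. The paper does not touch a Castaing representation: instead it introduces a uniform Lipschitz bound $R$ (available because $\{\gamma(s,\omega):\omega\in\Omega\}$ is bounded), defines the auxiliary compact--valued map
\[
\widetilde Z(\omega)=\big\{(r,p):\ |r|+|p|\leqslant R,\ r+\langle p,\dot\gamma(s,\omega)\rangle=c(\omega)\big\},
\]
verifies its measurability directly by testing $\widetilde Z^{-1}(K)$ against a countable dense sequence in each compact $K$, and then obtains $Z=\widetilde Z\cap\Psi$ as an intersection of two measurable compact--valued maps, citing \cite{CV}. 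Your route trades the artificial bound $R$ and the intersection theorem for the Castaing representation of $\Psi\cap E$ plus the measurable--minimum trick; this is slightly heavier machinery than the single selection stated in Theorem~\ref{CastaingValadier}, but it is standard (same reference) and has the mild advantage of working verbatim for any Carath\'eodory constraint $\phi(\omega,r,p)=0$, not only an affine one.
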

 \begin{proof} Since the lengths of the curves $t \mapsto \gamma(t,\omega)$ in $[0,s]$ are equibounded with
 respect to $\omega$, see Remark \ref{oss admissible curve}, and the elements $\gamma(0,\omega)$
 are finite as $\omega$ varies
 in $\Omega$, we deduce  that the set\ $\big\{\big(s,\gamma(s,\omega)\big) \mid \omega \in \Omega\big\}$\
is bounded. We denote by $R$ a  Lipschitz constant for all the $u_i$
in such a set.

We claim that the function
 \begin{equation}\label{new1}
   \omega \mapsto \frac \dd{\dd t} u_{\omega(s)}\big(s, \gamma(s,\omega)\big) \;\;\hbox{is measurable.}
\end{equation}
Indeed, it is obtained as the composition of $\omega\mapsto (\omega,\omega(s))$, which is a measurable map from $(\Omega,\F)$ to $\big(\Omega\times\ind,\F\otimes\parts(\ind)\big)$ by definition of the $\sigma$--algebra $\F$, with $(\omega,i)\mapsto \frac \dd{\dd \hbox{\em t}}
u_i\big(s,\gamma(s,\omega)\big)$, which is a $\Bor([0,s])\otimes\parts\{\ind\}$--measurable real function, as it can be easily checked by
arguing as in Lemma \ref{lem diff points} and
 Proposition \ref{dotga}. Furthermore, since the map $\omega \mapsto \dot\gamma(s, \omega)$
is measurable in force of Proposition \ref{dotga}, we deduce
 that
 \begin{equation}\label{new2}
  \omega \mapsto \langle p, \dot\gamma(s,\omega) \rangle \;\;\hbox{is measurable  for any  $p \in \R^N$.}
\end{equation}
  We proceed by showing that the compact--valued  map
\begin{eqnarray*}
 \widetilde Z(\omega)= \left\{(r,p)\in\R\times\R^N \mid\  |r|+|p| \leqslant R,\   r+\langle p, \dot\gamma(s,\omega)
 \rangle - \frac \dd{\dd t} u_{\omega(s)}\big(s, \gamma(s,\omega)\big) =0 \right  \}
\end{eqnarray*}
is measurable. Taking into account Definition \ref{uscmea}, it is enough to prove that ${\widetilde Z}^{-1} (K) \in
 \F$ for any compact subset $K$ of $\R^{N+1}$. Let $(r_n,p_n)_n$ be a dense
 sequence in $K$, then
 \[{\widetilde Z}^{-1} (K) = \bigcap_{h =1}^\infty \, \bigcup_{n =1}^ \infty
  \left \{\om \mid\  \big|r_n+\langle p_n, \dot\gamma(s,\omega)
 \rangle - \frac \dd{\dd t} u_{\omega(s)}\big(s, \gamma(s,\omega)\big)\big| < \frac
 1h \right \}\]
 and the sets appearing in the above formula belong to $\F$ thanks
 to \eqref{new1}, \eqref{new2}. This concludes the proof of the
 claim.

 Now notice that the set--valued map \ $\omega\mapsto \partial^C u_{\omega(s)}\big(s, \gamma(s,\omega)\big)$\  is $\F$--measurable. Indeed,
 it is obtained as the composition of \ $\omega\mapsto (\gamma(s,\omega),\omega(s))$, which is a measurable map from $(\Omega,\F)$ to $\big(\R^N\times\ind,\Bor(\R^N)\otimes\parts(\ind)\big)$, with $(x,i)\mapsto \partial^C u_i\big(s,x\big)$, which is an uppersemicontinuous set--valued map defined
 on $\R^N\times\ind$.

Bearing in mind the definition of $R$, we
find that
\[
Z(\omega) = \widetilde Z(\omega) \cap \partial^C u_{\omega(s)}\big(s, \gamma(s,\omega)\big).
\]
The map $Z$  has nonempty compact images thanks to Propositions
\ref{convexconvex} and \ref{clarke}, and it is  measurable as
an intersection of measurable set--valued maps, see \cite{CV}.
\end{proof}

\smallskip
We invoke  Theorem \ref{CastaingValadier} to derive

\begin{cor} Let $\uu$, $\gamma$, $s$, $Z(\omega)$  be as in Lemma \ref{new},  then there is a measurable selection
$ \omega \mapsto \big(r(s,\omega), p(s,\omega)\big)$ of $Z(\omega)$.
\end{cor}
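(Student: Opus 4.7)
The plan is to invoke the Castaing--Valadier selection theorem (Theorem \ref{CastaingValadier}) directly, since the preceding Lemma \ref{new} has essentially done all the work. The statement of Theorem \ref{CastaingValadier} requires three things about $Z$: that the domain $\Omega$ be a Polish space equipped with its Borel $\sigma$--algebra, that $Z$ take nonempty compact values, and that $Z$ be measurable in the sense of Definition \ref{uscmea}.

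First I would check the Polish hypothesis: this holds because $\Omega$ is the Skorohod space of c\`adl\`ag paths, endowed with its Borel $\sigma$--algebra $\F$, as recalled in Section \ref{caso}. The target $\R \times \R^N$ is obviously Polish. The compactness and measurability of $Z$ are exactly the content of Lemma \ref{new}. The only point that merits a line is nonemptiness: since $s \in J$, for every $\omega \in \Omega$ outside a $\PP$--null set the curve $t \mapsto u_{\omega(s)}\big(t,\gamma(t,\omega)\big)$ is differentiable at $s$ and so is $t \mapsto \gamma(t,\omega)$; Lemma \ref{clarke} applied to $u_{\omega(s)}(s,\cdot)$ along the curve $\gamma(\cdot,\omega)$ then produces at least one pair $(r,p) \in \partial^C u_{\omega(s)}\big(s,\gamma(s,\omega)\big)$ with $r+\langle p,\dot\gamma(s,\omega)\rangle = \frac{\dd}{\dd t} u_{\omega(s)}\big(t,\gamma(t,\omega)\big)_{|t=s}$, showing $Z(\omega)\neq\varnothing$. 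On the exceptional $\PP$--null set one may simply assign any fixed value in $\partial^C u_{\omega(s)}\big(s,\gamma(s,\omega)\big)$, which does not affect measurability.

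Having verified the hypotheses, Theorem \ref{CastaingValadier} yields a measurable map $\omega \mapsto \big(r(s,\omega),p(s,\omega)\big)$ with $\big(r(s,\omega),p(s,\omega)\big) \in Z(\omega)$ for every $\omega \in \Omega$, which is precisely the claim. I do not anticipate any real obstacle: the corollary is simply the combination of the measurability result of Lemma \ref{new} with the general selection theorem recorded in Section \ref{prelimina}, and the only subtlety --- that $Z$ be nonempty --- is handled by Lemma \ref{clarke} on the full--measure set of $\omega$ where the relevant derivatives exist.
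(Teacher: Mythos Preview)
Your proposal is correct and follows exactly the paper's approach: the paper simply writes ``We invoke Theorem \ref{CastaingValadier} to derive'' the corollary, with no further argument. Your additional verification of the hypotheses is fine, though note that nonemptiness of $Z(\omega)$ was already established within the proof of Lemma \ref{new} itself (via Propositions \ref{convexconvex} and \ref{clarke}), so you are re-proving a point the paper treats as already settled.
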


\smallskip

Taking into account Theorem \ref{teo key}, Lemma \ref{clarke}, and
the above Corollary,  we get

\smallskip

\begin{cor}\label{postkey} Let $\uu$, $\gamma$, $i$ be as in Theorem \ref{teo key}, and let $s >0$ be in $J$.
Then the map
 $t \mapsto \EE_i \big[u_{\omega(t)}\big(t,\gamma(t,\omega)\big) \big] $ is differentiable at
$s$, and, for every $\omega\in\Omega$, one can  find a measurable
selection  $\big(r(s,\omega), p(s,\omega)\big) \in \partial^C
u_{\omega(s)}\big(s,\gamma(s,\omega)\big)$  satisfying
$$
\frac{\dd}{\dd
t}\EE_i \big[u_{\omega(t)}\big(t,\gamma(t,\omega)\big) \big]_{\mbox{\Large $|$}_{t=s}} =
\EE_i \Big [-\left[B\uu\big(s,\gamma(s,\omega)\big)\right]_{\omega(s)} +
r(s,\omega)+\langle p(s,\omega),\dot\gamma(s,\omega)\rangle \Big ].
$$
\end{cor}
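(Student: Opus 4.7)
The plan is to combine Theorem \ref{teo key} with a pointwise (in $\omega$) application of Lemma \ref{clarke}, and then use the measurable selection supplied by the corollary right after Lemma \ref{new}.

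First I would fix $s\in J$ and apply Theorem \ref{teo key}: since $J$ is contained in the full--measure set of points where \eqref{key00} holds, this already gives the existence of $\frac{\dd}{\dd t}\EE_i\big[u_{\omega(t)}(t,\gamma(t,\omega))\big]_{|t=s}$ and identifies it with
\[
\EE_i\Big[-\big(B\uu\big)_{\omega(s)}\big(s,\gamma(s,\omega)\big) + \tfrac{\dd}{\dd t}u_{\omega(s)}\big(t,\gamma(t,\omega)\big)_{|t=s}\Big].
\]
Thus the whole task is to rewrite the inner time derivative using Clarke gradients of $u_{\omega(s)}$, in a way that depends measurably on $\omega$.

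Next, I would unpack the time derivative pointwise. By the definition of $J$, for every $\omega$ outside a $\PP$--null set $N_s$ both $t\mapsto\gamma(t,\omega)$ and $t\mapsto u_{\omega(s)}(t,\gamma(t,\omega))$ are differentiable at $s$. For such $\omega$, Lemma \ref{clarke} applied to the locally Lipschitz function $(t,x)\mapsto u_{\omega(s)}(t,x)$ on $\R_+\times\R^N$ and to the absolutely continuous curve $t\mapsto (t,\gamma(t,\omega))$ (whose derivative at $s$ is $(1,\dot\gamma(s,\omega))$) produces some $(r,p)\in\partial^C u_{\omega(s)}(s,\gamma(s,\omega))$ with
\[
\tfrac{\dd}{\dd t}u_{\omega(s)}\big(t,\gamma(t,\omega)\big)_{|t=s} = r + \langle p,\dot\gamma(s,\omega)\rangle.
\]
This is exactly the statement that the set $Z(\omega)$ of Lemma \ref{new} is nonempty for $\omega\in\Omega\setminus N_s$.

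Then I would invoke the previous corollary (Lemma \ref{new} together with Theorem \ref{CastaingValadier}) to obtain a measurable selection $\omega\mapsto (r(s,\omega),p(s,\omega))$ of $Z(\omega)$ on the set where $Z$ is nonempty; on the null set $N_s$ one can extend the selection arbitrarily inside $\partial^C u_{\omega(s)}(s,\gamma(s,\omega))$ (itself measurable and nonempty by Proposition \ref{convexconvex}) without affecting the expectation. Plugging this selection into the above identity and then into \eqref{key00} gives the stated formula. The only genuine difficulty has already been absorbed into Lemma \ref{new}, namely the joint measurability of the time derivative of $u_{\omega(s)}$ along $\gamma$ in the variable $\omega$; once that is granted, the remaining bookkeeping about the $\PP$--null set where differentiability fails is routine.
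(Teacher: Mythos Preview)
Your proposal is correct and follows essentially the same route as the paper: invoke Theorem \ref{teo key} to get the derivative formula, apply Lemma \ref{clarke} pointwise in $\omega$ to see that $Z(\omega)$ is nonempty, and then use the measurable selection corollary obtained from Lemma \ref{new} and Theorem \ref{CastaingValadier}. Your treatment of the $\PP$--null set $N_s$ is slightly more explicit than the paper's, but the argument is the same.
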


\medskip

We finally state, for the reader's convenience,  a differentiability
property of \quad $t \mapsto \EE_i \big[u_{\omega(t)}\big(t,\gamma(t)\big)\big]$, easily
descending from Theorem \ref{teo key},  in the way we are going to
use it in Theorem \ref{teo domination}.

\smallskip

\begin{cor}\label{postpostkey} Let $\uu$ be a
$\D{C}^1$ function and $\gamma$ a  deterministic curve of class $\D{C}^1$. For every index $i\in\ind$, the map
$t \mapsto \EE_i \big[u_{\omega(t)}\big(t,\gamma(t)\big)\big]$
is right differentiable at $t=0$ and
\begin{eqnarray*}
\frac {\dd^+}{\dd t} \EE_i \big[u_{\omega(t)}\big(t,\gamma(t)\big)\big]_{\mbox{\Large
$|$}_{t=0}} = -\big[B \uu\big(0,\gamma(0)\big)\big]_i +
\partial_t u_i\big(0,\gamma(0)\big)+\big\langle D
u_i\big(0,\gamma(0)\big),\dot\gamma(0)\big\rangle.
\end{eqnarray*}
\end{cor}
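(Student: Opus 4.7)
The plan is to decompose the forward difference quotient exactly as in the proof of Theorem \ref{teo key}, but specialized to $s=0$, and to exploit the $\D{C}^1$ regularity of $\uu$ and $\gamma$ to sidestep the delicate measure--theoretic arguments needed at generic $s$. For $h>0$ small I would write
\[
\frac{\EE_i\big[u_{\omega(h)}(h,\gamma(h))\big]-\EE_i\big[u_{\omega(0)}(0,\gamma(0))\big]}{h}=\EE_i\big[\psi_h(\omega)\big]+\EE_i\big[\varphi_h(\omega)\big],
\]
with
\[
\psi_h(\omega)=\frac{u_{\omega(h)}\big(0,\gamma(0)\big)-u_{\omega(0)}\big(0,\gamma(0)\big)}{h},\qquad \varphi_h(\omega)=\frac{u_{\omega(h)}\big(h,\gamma(h)\big)-u_{\omega(h)}\big(0,\gamma(0)\big)}{h}.
\]

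To handle $\psi_h$, I would observe that the random variable $\mathbf g(\omega):=\uu(0,\gamma(0))$ is constant in $\omega$, hence trivially $\F_0$--measurable and bounded on $\Omega_i$. Proposition \ref{prop index derivative} applied with $s=0$ then provides, in the right--derivative sense at $t=0$,
\[
\lim_{h\to 0^+}\EE_i\big[\psi_h(\omega)\big]=-\EE_i\Big[\big(B\uu(0,\gamma(0))\big)_{\omega(0)}\Big]=-\big[B\uu(0,\gamma(0))\big]_i,
\]
the last equality holding because $\omega(0)=i$ for $\PP_i$--almost every $\omega$.

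For $\varphi_h$ I would exploit the right--continuity of paths in $\Omega$: for each $\omega$ there exists $h_\omega>0$ such that $\omega(h)=\omega(0)$ for every $h\in[0,h_\omega)$. On $\Omega_i$ this identifies $\varphi_h(\omega)$ with $h^{-1}\big(u_i(h,\gamma(h))-u_i(0,\gamma(0))\big)$, which, by the chain rule applied to the $\D{C}^1$ function $t\mapsto u_i(t,\gamma(t))$, converges to $\partial_t u_i(0,\gamma(0))+\langle Du_i(0,\gamma(0)),\dot\gamma(0)\rangle$ as $h\to 0^+$. Since the finitely many functions $t\mapsto u_j(t,\gamma(t))$, $j\in\ind$, are each Lipschitz on $[0,1]$, the quantity $|\varphi_h(\omega)|$ is bounded uniformly in $h\in(0,1]$ and in $\omega\in\Omega$, and the Dominated Convergence Theorem yields
\[
\lim_{h\to 0^+}\EE_i\big[\varphi_h(\omega)\big]=\partial_t u_i(0,\gamma(0))+\langle Du_i(0,\gamma(0)),\dot\gamma(0)\rangle.
\]
Adding the two limits gives the announced identity. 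There is no real obstacle beyond cleanly isolating the index--switching contribution (carried by $\psi_h$ and dispatched by Proposition \ref{prop index derivative}) from the spatial--temporal contribution (carried by $\varphi_h$ and dispatched by dominated convergence); the $\D{C}^1$ hypotheses completely bypass the measurable selection and Denjoy--Young--Saks machinery that were unavoidable at generic $s$ in Theorem \ref{teo key}.
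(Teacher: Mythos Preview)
Your argument is correct and is precisely the way the paper intends the result to ``easily descend from Theorem \ref{teo key}'': you reproduce the $\psi_h/\varphi_h$ decomposition of that proof at $s=0$, dispatch $\psi_h$ via Proposition \ref{prop index derivative}, and use the $\D{C}^1$ hypotheses to replace the a.e.\ arguments for $\varphi_h$ by a direct dominated--convergence step. There is nothing to add.
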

\end{subsection}
\medskip
\end{section}

\begin{section}{The random Lax--Oleinik formula and its PDE counterpart}\label{LO}
The random Lax--Oleinik formula is given by
\begin{equation}\label{random Lax-Oleinik formula}\tag{LO}
\big(\S(t)\uu^0\big)_i(x) = \inf_{\gamma(0,\omega)=x} \EE_i\left[
u^0_{\omega(t)}\big(\gamma(t,\omega)\big)+\int_0^t
L_{\omega(s)}\big(\gamma(s,\omega),-\dot\gamma(s,\omega)\big)\,\dd s\right]
\end{equation}
for every $(t,x)\in\cyl$ and $i\in\ind$, and for any bounded initial
datum $\uu^0:\R^N\to\R^m$.  Some few properties can be recovered via direct inspection of the formula.
For every $(t,x)\in\cyl$, we have
\begin{equation}\label{ineq S(t)}
\big ( -\|\uu^0\|_\infty+t\mu \big ) \, \1 \leqslant  \S(t) \uu^0(x)\leqslant \e^{-Bt}\uu^0(x)+tM \, \1 ,
\end{equation}
where $\mu:=\inf\limits_{i}\inf\limits_{\R^N\times\R^N}
 L_i$\quad and\quad $M:=\sup\limits_{i}\sup\limits_{x\in\R^N} L_i(x,0)$.
 The leftmost  inequality in \eqref{ineq S(t)} is immediate,
 while the second follows by taking a constant curve  and by applying \eqref{marziotta}.

We furthermore derive from the definition
\begin{equation}\label{nonexpa}
\|\S(t)\uu^0-\S(t)\vv^0\|_\infty\leqslant \|\uu^0-\vv^0\|_\infty
\qquad\hbox{for $t \geqslant 0$,}
\end{equation}
for any  given pair of bounded functions $\uu^0$, $\vv^0$.

We proceed by introducing a sub--optimality principle  that
will allow us to link \eqref{LO} to systems,  and to show in this way
the semigroup and continuity properties of the related value function.

\smallskip

\begin{definition}\label{def dominated evolution} We say that a function $\uu: \R_+
\times \R^N \to \R^m$  satisfies the  {\em sub--optimality principle} if
\[u_i\big(t_0+h,\gamma(0)\big) - \EE_i \big[u_{\omega(h)}\big(t_0,\gamma(h)\big)\big]\leqslant \EE_i\left[
\int_0^{h} L_{\omega(s)}\big(\gamma(s),-\dot\gamma(s)\big)\,\dd
s\right]
\]
for any   $t_0$, $h \geqslant 0$, $i \in \ind$ and any
deterministic  curve $\gamma$.
\end{definition}
\medskip
The link with the Lax--Oleinik semigroup is given by

\begin{prop}\label{marzia} Let $\uu^0: \R^N \to \R^m$ bounded and continuous.
The function $(t,x) \mapsto \S(t) \uu^0(x)$
    satisfies the sub--optimality principle.
\end{prop}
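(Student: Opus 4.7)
The plan is to prove the inequality by constructing, for each $\eps>0$, a competitor curve in the Lax--Oleinik variational problem at time $t_0+h$ that concatenates the deterministic $\gamma$ on $[0,h]$ with an almost optimal random continuation on $[h,t_0+h]$. Concretely, I fix $\eps>0$, $i\in\ind$, $t_0,h\geqslant 0$ and the deterministic curve $\gamma$, and for every $j\in\ind$ (finitely many choices, so no measurable selection is needed) I use the definition of $\S(t_0)\uu^0$ to choose an admissible curve $\zeta_j$ with $\zeta_j(0,\omega)\equiv\gamma(h)$ and
\[
\EE_j\Big[u^0_{\omega(t_0)}\big(\zeta_j(t_0,\omega)\big)+\int_0^{t_0} L_{\omega(s)}\big(\zeta_j(s,\omega),-\dot\zeta_j(s,\omega)\big)\,\dd s\Big]\leqslant \big(\S(t_0)\uu^0\big)_j\big(\gamma(h)\big)+\eps.
\]
Denoting by $\tau_h\omega:=\omega(\cdot+h)$ the time--shift operator on $\Omega$, I then define the candidate curve
\[
\eta(s,\omega):=
\begin{cases}
\gamma(s),&0\leqslant s\leqslant h,\\
\zeta_{\omega(h)}(s-h,\tau_h\omega),&h\leqslant s\leqslant t_0+h.
\end{cases}
\]

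The next step is to verify that $\eta$ is an admissible curve starting at $\gamma(0)$. Continuity at $s=h$ holds since every $\zeta_j(0,\cdot)\equiv\gamma(h)$; the uniform local absolute continuity in the sense of \eqref{h1 control} follows from the absolute continuity of $\gamma$ on $[0,h]$ combined with the uniform estimates \eqref{h1 control} for each of the finitely many $\zeta_j$. For the nonanticipating condition \eqref{h2 control}: if $\omega_1\equiv\omega_2$ on $[0,t]$ with $t>h$, then $\omega_1(h)=\omega_2(h)=:j$ and $\tau_h\omega_1\equiv\tau_h\omega_2$ on $[0,t-h]$, so nonanticipation of $\zeta_j$ gives $\eta(\cdot,\omega_1)\equiv\eta(\cdot,\omega_2)$ on $[0,t]$; the case $t\leqslant h$ is immediate since $\eta$ is deterministic there. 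Measurability of $\eta$ follows because the shift $\tau_h:\Omega\to\Omega$ is $\F$--measurable (its preimages of cylinders are cylinders).

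Finally, using $\eta$ in the definition of $\big(\S(t_0+h)\uu^0\big)_i(\gamma(0))$ and splitting the integral at $s=h$, I obtain
\[
\big(\S(t_0+h)\uu^0\big)_i\big(\gamma(0)\big)\leqslant \EE_i\Big[\int_0^h L_{\omega(s)}\big(\gamma(s),-\dot\gamma(s)\big)\,\dd s\Big]+A,
\]
where
\[
A:=\EE_i\Big[u^0_{\omega(t_0+h)}\big(\zeta_{\omega(h)}(t_0,\tau_h\omega)\big)+\int_0^{t_0}L_{(\tau_h\omega)(s)}\big(\zeta_{\omega(h)}(s,\tau_h\omega),-\dot\zeta_{\omega(h)}(s,\tau_h\omega)\big)\,\dd s\Big].
\]
The crucial step is now the Markov property of $\PP_i$ at time $h$: conditioning on $\omega(h)=j$, the law of $\tau_h\omega$ is exactly $\PP_j$, so
\[
A=\sum_{j=1}^m\PP_i\big(\omega(h)=j\big)\,\EE_j\Big[u^0_{\omega(t_0)}\big(\zeta_j(t_0,\omega)\big)+\int_0^{t_0}L_{\omega(s)}\big(\zeta_j(s,\omega),-\dot\zeta_j(s,\omega)\big)\,\dd s\Big]\leqslant \EE_i\big[\big(\S(t_0)\uu^0\big)_{\omega(h)}\big(\gamma(h)\big)\big]+\eps.
\]
Letting $\eps\to 0$ yields the sub--optimality inequality. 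The main obstacle is the rigorous disintegration of $\PP_i$ along the shift $\tau_h$ conditionally on $\omega(h)$, but this is a direct consequence of the Markov property \eqref{Markov property} for $\PP_i$ once one tests against cylinders in $\tau_h\omega$; the approach is cleanly compatible with Section \ref{caso} because only the finite number $m$ of auxiliary curves $\zeta_j$ enters, avoiding any appeal to a measurable selection in the starting--index variable.
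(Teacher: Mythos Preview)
Your proof is correct and follows essentially the same route as the paper: both pick $\eps$-optimal curves $\zeta_j$ (the paper's $\xi_j$) for each index $j$, concatenate with $\gamma$ via the shift $\tau_h=\Phi_h$, verify admissibility of the resulting curve, and then decompose the expectation using the Markov structure. The only cosmetic difference is that the paper isolates the push-forward identity $\Phi_h\#\PP_i=\sum_j(\e^{-Bh})_{ij}\PP_j$ as a separate lemma (Lemma~\ref{premarzia}) and applies it through a change of variables $\theta=\Phi_h(\omega)$, whereas you phrase the same computation as the conditional law of $\tau_h\omega$ given $\omega(h)=j$; since $\PP_i(\omega(h)=j)=(\e^{-Bh})_{ij}$ and the integrand depends on $\omega$ only through $(\omega(h),\tau_h\omega)=\big((\tau_h\omega)(0),\tau_h\omega\big)$, the two formulations are equivalent.
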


To prove the proposition, we need some preliminary material. We
denote, for any $h > 0$, by $\Phi_h$  the shift operator defined via
\[\Phi_h(\omega)= \omega(\cdot + h) \qquad\hbox{for any $\omega \in
\Omega$}.\]
We recall that it is a measurable map from $\Omega$ to
$\Omega$, see \cite{SMT14}.

\smallskip

\begin{lemma}\label{premarzia} For any index $i$, any $h >0$ we have
\[\Phi_{h} \# \PP_i = \sum_{k=1}^m \left (\e^{-Bh} \right )_{ik} \,
\PP_k.\]
\end{lemma}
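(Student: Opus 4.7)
The plan is to verify the identity on the separating class of cylinders and then invoke the uniqueness statement (Theorem 16.6 of Billingsley cited in the text) to extend it to all of $\F$. Since both sides of the claimed equality are finite (sub)probability measures on $(\Omega,\F)$, it suffices to check that they assign the same value to every cylinder $\mathcal C(t_1,\dots,t_n;i_1,\dots,i_n)$ with $0\leqslant t_1<\dots<t_n$ and $i_1,\dots,i_n \in \I$.

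The first step is simply to unravel the push-forward: by definition,
\[
(\Phi_h\#\PP_i)\bigl(\mathcal C(t_1,\dots,t_n;i_1,\dots,i_n)\bigr)
=\PP_i\bigl(\mathcal C(t_1+h,\dots,t_n+h;i_1,\dots,i_n)\bigr),
\]
because $\omega\in\Phi_h^{-1}(\mathcal C(t_1,\dots,t_n;i_1,\dots,i_n))$ iff $\omega(t_j+h)=i_j$ for every $j$. The second step is to condition on the value of $\omega(h)$ and use the Markov property \eqref{Markov property} for $\PP_i$. Writing $p_{ik}(h):=\PP_i(\omega(h)=k)=(\e^{-Bh})_{ik}$ (this is the computation already recorded just before \eqref{marziotta}), we get
\[
\PP_i\bigl(\omega(h)=k,\,\omega(t_1+h)=i_1,\dots,\omega(t_n+h)=i_n\bigr)
= p_{ik}(h)\,(\e^{-B t_1})_{k\,i_1}\,\prod_{j=2}^{n}(\e^{-B(t_j-t_{j-1})})_{i_{j-1}i_j}.
\]
Summing over $k\in\I$ then gives the joint distribution of $(\omega(t_1+h),\dots,\omega(t_n+h))$ under $\PP_i$ as a mixture, with weights $p_{ik}(h)=(\e^{-Bh})_{ik}$, of the joint distributions of $(\omega(t_1),\dots,\omega(t_n))$ under the Markov chains started at $k$, i.e.\ under $\PP_k$.

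The third and final step is to recognize the $k$-th term of that mixture as $\PP_k(\mathcal C(t_1,\dots,t_n;i_1,\dots,i_n))$, again by the Markov property applied to $\PP_k$. Hence
\[
(\Phi_h\#\PP_i)\bigl(\mathcal C(t_1,\dots,t_n;i_1,\dots,i_n)\bigr)
=\sum_{k=1}^m(\e^{-Bh})_{ik}\,\PP_k\bigl(\mathcal C(t_1,\dots,t_n;i_1,\dots,i_n)\bigr),
\]
which is the desired equality on $\mathcal C$. Since cylinders form a separating class, both sides coincide as measures on $\F$, proving the lemma. There is no real obstacle here; the only point to be mildly careful about is the degenerate case $t_1=0$ (where the conditioning on $\omega(h)$ should be organized so one still lands on a cylinder of $\CC$), but this is handled by the very same formula since the Markov property \eqref{Markov property} already covers the endpoint $t_1\geqslant 0$.
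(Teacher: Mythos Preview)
Your proof is correct and follows essentially the same route as the paper's: both check the identity on cylinders by pulling back through $\Phi_h$, conditioning on $\omega(h)$ via the Markov property, and then invoking that cylinders form a separating class. The paper's version is slightly more compressed (it writes the conditioning step as inserting the time $h$ into the cylinder rather than expanding the transition product), but the argument is the same.
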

\begin{proof}
If $\mathcal C (t_1, \cdots, t_k; i_1, \cdots i_k)$ is a cylinder,
then
\begin{align*}
\Phi_{h} \# \PP_i \big(\mathcal C (t_1, \cdots, t_k; i_1, \cdots i_k)\big) &= \PP_i \Phi_{h}^{-1}\big(\mathcal
C (t_1, \cdots, t_k; i_1, \cdots i_k)\big)\\
& = \PP_i \big(\mathcal C (t_1+h, \cdots, t_k+h; i_1, \cdots i_k)\big) \\
&= \sum_{k=1}^m \PP_i \big(\mathcal C (h,t_1+h, \cdots, t_k+h; k,i_1, \cdots i_k)\big) \\
&= \sum_{k=1}^m \left (\e^{-Bh} \right )_{ik} \, \PP_k \big(\mathcal
C (t_1, \cdots, t_k; i_1, \cdots i_k)\big).
\end{align*}
The above computation proves the
assertion, because the family of cylinders is a separating class, as it was pointed out
in Section \ref{sez prob}.
\end{proof}

\medskip

\begin{proof}[Proof of Proposition \ref{marzia}]
We select  positive times $h$, $t_0$, a deterministic curve $\gamma$ and $i \in \ind$, we set  $x = \gamma(0)$, $y = \gamma(h)$.
We fix an $\eps >0$ devoted to become infinitesimal, and pick for any $j \in \ind$ an admissible random curve
$\xi_j$, with initial point $y$, such that
\begin{equation}\label{propodomi10}
    (\S(t_0) \uu_0)_j(y) \geqslant \EE_j \left [u^0_{\omega(t_0)}\big(\xi_j(t_0,\theta)\big)+
\int_0^{t_0} L_{\omega(s)}(\xi_j,- \dot\xi_j) \, \dd s \right ]-\eps.
\end{equation}
We proceed by defining for any $(t,\omega) \in [0,+ \infty)
\times \Omega$
\[ \eta(t,\omega) = \left \{ \begin{array}{cc}
                    \gamma(t) & \quad t \in [0,h) \\
                    \xi_{\omega(h)}\big(t-h,\Phi_{h}(\omega)\big) &\quad   t \in [h, t_0+h) \\
                    \xi_{\omega(h)} \big(t_0,\Phi_{h}(\omega)\big) &\quad   t \in [t_0+h, + \infty ) \\
                         \end{array} \right . \]
 We claim  that $\eta$  is an
admissible curve.  We first show  that for any $t \geqslant 0$, any  Borel set $E$ in $\R^N$
\begin{equation}\label{marzia1}
   \Omega_E := \{\omega \mid \eta(t, \omega) \in E\} \in \F_t.
\end{equation}
Clearly $\Omega_E$ is equal either to the whole $\Omega$ or to the empty set when $t \in [0,h]$. We focus on the case where
 $t \in (h, t_0+ h]$, the same argument will give the property when $t > t_0+h$. We have
\begin{equation}\label{marzia2}
\Omega_E= \bigcup_j \big [ \{\omega \mid \xi_j\big(t -h, \Phi_h(\omega)\big)  \in E\} \cap \mathcal C(h,j) \big ].
\end{equation}
Owing to the fact that  $\xi_j$ is $\F_t$ adapted, for any $j$, and to the relation $\Phi_h^{-1}(\F_{t-h}) \subset \F_t$,
see for instance  \cite[Proposition B.5]{SMT14}, we further get
\[ \{\omega \mid \xi_j\big(t-h, \Phi_h(\omega)\big) \in E\} = \Phi_h^{-1} \big (\{\theta \mid
 \xi_j(t-h, \theta) \in E\} \big ) \in \Phi_h^{-1}(\F_{t-h}) \subset \F_t.\]
 This gives \eqref{marzia1} taking into account \eqref{marzia2} and that $\mathcal C(h,j) \in \F_h \subset \F_t$, for any $j$.
  Being the continuous concatenation of a deterministic curve and $m$ random admissible  curves, we also see that $\eta$ satisfies
   item (i) in Definition \ref{def admissible curve}. We have therefore proved that it is an admissible curve, as it was claimed.

We  have by Lax--Oleinik formula
\begin{eqnarray}\label{propodomi2}
&& (\S(t_0+h) \uu^0)_i(x)  \leqslant \EE_i \left [
u^0_{\omega(t_0+h)}\big(\eta(t_0+h,\omega)\big)+\int_0^{t_0+h}
L_{\omega(s)}\big(\eta(s,\omega),-\dot\eta(s,\omega)\big)\,\dd s \right]\nonumber\\
  &&= \EE_i \left [
  u^0_{\Phi_{h}(\omega)(t_0)}\big(\xi_{\Phi_h(\omega)(0)}\big(t_0,\Phi_{h}(\omega)\big)\big)+\int_0^{h}
L_{\omega(s)}\big(\gamma(s),-\dot\gamma(s)\big)\,\dd s \,  +  \right . \\
&& \left .  \int_{0}^{t_0}
L_{\Phi_{h}(\omega)(s)}\Big(\xi_{\Phi_h(\omega)(0)}\big(s,\Phi_{h}
(\omega)\big),-\dot\xi_{\Phi_h(\omega)(0)}\big(s,\Phi_{h} (\omega)\big)\Big)\,\dd
s\right]. \nonumber
\end{eqnarray}
We apply the change of variable formula with $\theta= \Phi_h(\omega)$  and Lemma \ref{premarzia} to get for any $j \in \ind$
\begin{eqnarray*}
   && \int_{\Omega} \Big [
u^0_{\Phi_{h}(\omega)(t_0)}\big(\xi_{\Phi_h(\omega)(0)}\big(t_0,\Phi_{h}(\omega)\big)\big)  \\ &&
+ \int_{0}^{t_0}
L_{\Phi_{h}(\omega)(s)}\Big(\xi_{\Phi_h(\omega)(0)}\big(s,\Phi_{h}
(\omega)\big),-\dot\xi_{\Phi_h(\omega)(0)}\big(s,\Phi_{h} (\omega)\big)\Big)\,\dd
s\Big ] \dd \PP_i(\omega)\\
  &&=\int_{\Omega} \left [
u^0_{\theta(t_0)}\big(\xi_{\theta(0)}(t_0,\theta)\big) + \int_{0}^{t_0}
L_{\theta(s)}\big(\xi_{\theta(0)}(s,\theta),-\dot\xi_{\theta(0)}(s,\theta)\big)\,\dd
s\right ] \dd \Phi_{h} \# \PP_i(\theta) \\
  && = \sum_{j=1}^m     \left (\e^{-Bh} \right )_{ij} \, \EE_j \left [
u^0_{\theta(t_0)}\big(\xi_j(t_0,\theta)\big) + \int_{0}^{t_0}
L_{\theta(s)}\big(\xi_j(s,\theta),-\dot\xi_j (s,\theta)\big)\,\dd
s\right ] .
\end{eqnarray*}
We plug the previous relation in \eqref{propodomi2} and use
\eqref{propodomi10}, \eqref{marziotta} to get
\begin{align*}
& (\S(t_0+h) \uu^0)_i(x)  \leqslant  \EE_i \Big [ \int_0^{h}
L_{\omega(s)}\big(\gamma(s,\om),-\dot\gamma(s,\om)\big)\,\dd s \Big ]  +
\\ &  \sum_{j=1}^m \left (\e^{-Bh} \right )_{ij} \, \EE_j \Big [
u^0_{\theta(t_0)}\big(\xi_j(t_0,\theta)\big) + \int_{0}^{t_0}
L_{\theta(s)}\big(\xi_j(s,\theta),-\dot\xi_j (s,\theta)\big)\,\dd
s \Big ]   \leqslant  \\
& \EE_i \bigg[ \int_0^{h}
L_{\omega(s)}\big(\gamma(s,\om),-\dot\gamma(s,\om)\big)\,\dd s  \bigg ] +
\sum_{j=1}^m \left (\e^{-Bh} \right )_{ij} \, \Big ( (\S(t_0)
\uu_0)_j\big(y \big) + \eps \Big ) = \\
& \EE_i \left [ \int_0^{h}
L_{\omega(s)}\big(\gamma(s,\om),-\dot\gamma(s,\om)\big)\,\dd s +
\big(\S(t_0) \uu_0\big)_{\omega(h)}\big ( y \big)
 + \eps \right ],
\end{align*}
and the assertion follows because $\eps$ is arbitrary.
\end{proof}

\medskip

In the next result we link the   sub--optimality principle  with
the  property of being subsolution to the system \eqref{HJS}. It is
worth pointing out that we are not assuming any continuity or
semicontinuity condition on the function appearing in the statement.

\smallskip

\begin{teorema}\label{teo domination}
Let $\uu: \R_+ \times \R^N \to \R^m$ be a function locally bounded from above satisfying the sub--optimality principle. Then
 it is a subsolution to \eqref{HJS} in $(0,+\infty) \times \R^N$.
\end{teorema}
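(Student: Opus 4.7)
The plan is to verify the subsolution inequality pointwise. Fix $(t_0,x_0)\in (0,+\infty)\times\R^N$, an index $i \in \ind$, a pair $(p_t,p_x)\in D^+ u_i^*(t_0,x_0)$, and a $\D{C}^1$ supertangent $\phi$ to $u_i^*$ at $(t_0,x_0)$ with $\phi(t_0,x_0)=u_i^*(t_0,x_0)$ and $(\partial_t\phi,D_x\phi)(t_0,x_0)=(p_t,p_x)$. Because of the Fenchel identity $H_i(x_0,p_x)=\sup_{q\in\R^N}\{\langle p_x,q\rangle -L_i(x_0,q)\}$, it suffices to show that for every $q\in\R^N$,
\[
p_t + \langle p_x, q\rangle - L_i(x_0,q) + \big(B\uu^*(t_0,x_0)\big)_i \leqslant 0.
\]

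The first step is to upgrade the sub--optimality principle from $\uu$ to its upper semicontinuous envelope $\uu^*$. Fix $q\in\R^N$ and $h\in(0,t_0)$. Choose $(t_n,x_n) \to (t_0,x_0)$ with $u_i(t_n,x_n) \to u_i^*(t_0,x_0)$. For $n$ large enough (so that $t_n > h$) apply the sub--optimality principle at $(t_n, x_n)$ with the deterministic curve $\gamma_n(s)=x_n-sq$, obtaining
\[
u_i(t_n,x_n) \leqslant \EE_i\big[u_{\omega(h)}(t_n-h,x_n-hq)\big] + \EE_i\!\int_0^h L_{\omega(s)}(x_n-sq,q)\,\dd s.
\]
Let $n\to\infty$. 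The left--hand side converges to $u_i^*(t_0,x_0)$; reverse Fatou (justified by the local upper bound on $\uu$) combined with upper semicontinuity of each $u_j^*$ controls the first expectation on the right by $\EE_i[u_{\omega(h)}^*(t_0-h,x_0-hq)]$; the Lagrangian integral passes to the limit by dominated convergence (continuity of $L$ in $x$, boundedness on compacts). The outcome is
\[
u_i^*(t_0,x_0) \leqslant \EE_i\big[u_{\omega(h)}^*(t_0-h,x_0-hq)\big] + \EE_i\!\int_0^h L_{\omega(s)}(x_0-sq,q)\,\dd s.
\]

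The second step performs a Taylor expansion in $h$. Fix $\eta > 0$. Upper semicontinuity of the components of $\uu^*$ yields, for $h$ small enough, $u_j^*(t_0-h,x_0-hq) \leqslant u_j^*(t_0,x_0) + \eta$ for every $j$, while the supertangent property gives the sharper $u_i^*(t_0-h,x_0-hq) \leqslant \phi(t_0-h,x_0-hq)$. Since the entries of $\e^{-Bh}$ are nonnegative,
\[
\EE_i\big[u_{\omega(h)}^*(t_0-h,x_0-hq)\big] \leqslant (\e^{-Bh})_{ii}\,\phi(t_0-h,x_0-hq) + \sum_{j\neq i} (\e^{-Bh})_{ij}\big(u_j^*(t_0,x_0)+\eta\big).
\]
Subtract $u_i^*(t_0,x_0)=\phi(t_0,x_0)$ from both sides of the propagated principle and expand using $(\e^{-Bh})_{ii}=1-hb_{ii}+O(h^2)$, $(\e^{-Bh})_{ij}=-hb_{ij}+O(h^2)$ for $j\neq i$, $\phi(t_0-h,x_0-hq)=\phi(t_0,x_0)-h(p_t+\langle p_x,q\rangle)+o(h)$, and the identity $\sum_{j\neq i}b_{ij}=-b_{ii}$. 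After rearranging, one obtains
\[
h\big(p_t+\langle p_x,q\rangle\big)+h\big(B\uu^*(t_0,x_0)\big)_i - h\eta\,b_{ii}+o(h) \leqslant \EE_i\!\int_0^h L_{\omega(s)}(x_0-sq,q)\,\dd s.
\]
Divide by $h$ and send $h\to 0^+$. Fubini's theorem and the fact that $\omega(s)\to i$ holds $\PP_i$--almost surely as $s \to 0^+$ (right--continuity combined with the discreteness of $\ind$) imply, via dominated convergence, that $h^{-1}\EE_i\!\int_0^h L_{\omega(s)}(x_0-sq,q)\,\dd s \to L_i(x_0,q)$. Letting $\eta \to 0^+$ and then taking the supremum over $q$ yields the desired Hamiltonian inequality.

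The central subtlety is the interplay between the scalar--indexed test (a supertangent to $u_i^*$ alone) and the vector character of the coupling: $(B\uu^*)_i$ mixes all components, and the expectation $\EE_i[u_{\omega(h)}^*(\cdot)]$ reflects an order--$h$ probability of switching to indices $j \neq i$ within time $h$. Pairing the sharp tangent bound on the component $i$ with the uniform (in $h$) upper semicontinuity slack $\eta$ on the remaining components handles this cleanly; the slack enters only through the harmless term $\eta\,b_{ii}$, which disappears in the successive limits $h \to 0^+$ and $\eta \to 0^+$.
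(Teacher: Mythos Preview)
Your proof is correct and rests on the same core idea as the paper's: touch $u_i^*$ from above by a smooth supertangent, handle the remaining components $u_j^*$ ($j\neq i$) with an upper-semicontinuity slack, and extract the coupling term from the first-order behaviour of $\e^{-Bh}$ as $h\to 0^+$.

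The execution differs in two respects worth noting. First, you preprocess the sub--optimality principle by passing from $\uu$ to $\uu^*$ once and for all (via the finite sum $\EE_i[u_{\omega(h)}(\cdot)]=\sum_j(\e^{-Bh})_{ij}u_j(\cdot)$ and $\limsup$), and then work at the fixed base point $(t_0,x_0)$ with a single parameter $h\to 0^+$; the paper instead keeps $\uu$, approximates $(t_0,x_0)$ by a realising sequence $(s_n,x_n)$, and must carefully choose $h_n\to 0^+$ with $\eps_n/h_n\to 0$. Second, you expand $\e^{-Bh}$ explicitly to first order, whereas the paper packages this step by applying its differentiation formula (Corollary~\ref{postpostkey}) to an auxiliary $\R^m$--valued $\D C^1$ function whose $i$--th component is the supertangent and whose other components are $\D C^1$ functions hitting $u_j^*(t_0,x_0)+\delta$. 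Your route is more self-contained and avoids invoking the abstract derivation machinery of Section~\ref{sez admissible curves}; the paper's route showcases that machinery, which is one of its main tools. Either way, the slack parameter ($\eta$ for you, $\delta$ for the paper) enters only through a term of order $h\cdot\eta\cdot b_{ii}$ and vanishes in the successive limits.
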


\begin{proof}
Recall that we denote by $\uu^*$ the upper semicontinuous envelope
of $\uu$.

Let us fix an index $i\in\ind$, $q \in \R^N$,   $\delta>0$. Let
$\psi$ be a $\D{C}^1$ supertangent to $u^*_i$ at a point $(s_0,x_0)
\in (0,+\infty) \times \R^N$.  We define a $\D{C}^1$, $\R^m$--valued
function $\phi$ setting $\phi_i=\psi$,  and choosing the other
components $\phi_j$ of class $\D{C}^1$ such that
\begin{equation}\label{domina00}
   \phi_j(s_0,x_0)= u^*_j(s_0,x_0) + \delta \qquad\hbox{for $j \neq
   i$.}
\end{equation}
The  definition implies
\begin{equation}\label{domina1}
   u^*_i(s_0,x_0) = \psi(s_0,x_0)= \phi_i(s_0,x_0)
\end{equation}
and
\begin{equation}\label{domina2}
  \phi_k \geqslant  u^*_k  \geqslant u_k \qquad\hbox{ for any $k \in \ind$, in some
neighborhood of $(s_0,x_0)$.}
\end{equation}
 The argument being local, we can assume $\phi$ uniformly continuous with corresponding modulus denoted by $\nu$.
We consider $(s_n,x_n)$ converging to
 $(s_0,x_0)$ with $\lim_n u_i(s_n,x_n)= u_i^*(s_0,x_0)$, and  set
 \[\eps_n = \nu(|x_n -x_0| + |s_n - s_0|) + |u_i(s_n,x_n)-
 u_i^*(s_0,x_0)|.\]
We  proceed by  selecting an infinitesimal positive sequence $h_n$ with
\begin{equation}\label{domina5}
 \lim_n \frac{\eps_n}{h_n}=0
\end{equation}
 and   define $\gamma_n(s)= x_n + s \, q$, $\gamma(s)= x_0 + s \,
q$. From the supertangency condition, the relation $|\gamma_n(s)-
\gamma(s)| = |x_n - x_0|$ for any $s$, and the definition of
$\eps_n$, we derive for $n$ large enough
\begin{align}
  \psi(s_0,x_0) &- \EE_i
\big[\phi_{\omega(h_n)}\big(-h_n+s_0,\gamma(h_n)\big)\big] \leqslant  \nonumber \\
 &\leqslant
 u_i(s_n,x_n) - \EE_i \big[\phi_{\omega(h_n)}\big(-  h_n +s_n, \gamma_n(h_n)\big)\big] + \eps_n        \nonumber  \\
& \leqslant    u_i(s_n,x_n) - \EE_i \big[u_{\omega(h_n)}\big(-  h_n +s_n, \gamma_n(h_n)\big)\big] +  \eps_n.   \label{domina3}
\end{align}
Further, we have by the sub--optimality principle
\begin{equation}\label{domina6}
   u_i(s_n,x_n) - \EE_i \big[u_{\omega(h)}\big(-  h_n +s_n, \gamma_n(h_n)\big)\big]  \leqslant
  \EE_i\left [ \int_0^{h_n} L_{\omega(s)}  (\gamma_n(s),-
 q ) \,\dd s   \right].
\end{equation}
We apply Corollary \ref{postpostkey} to the function $\widetilde
\phi(s,x) := \phi(s_0-s,x)$ and the curve
 $\gamma$. We derive from \eqref{domina00}, \eqref{domina1}, \eqref{domina2}, \eqref{domina5}, \eqref{domina3}, \eqref{domina6},  and assumption (B) on the
coupling matrix,
\begin{align}
&\liminf_{n\to+\infty} \frac 1{h_n} \EE_i\left [ \int_0^{h_n}
L_{\omega(s)}  (\gamma_n(s),-
 q ) \,\dd s   + \eps_n \right]
\nonumber
\\ &\geqslant  \lim_{n\to+\infty} \frac 1{h_n} \, \big (
\psi(s_0,x_0)
 - \EE_i \big[\phi_{\omega(h_n)}\big(-h_n+s_0,\gamma(h_n)\big)\big] \big ) \nonumber
\\ & = - \frac {\dd^+}{\dd t} \EE_i \big[\widetilde \phi
_{\omega(t)}\big(t,\gamma(t)\big)\big]_{\mbox{\Large $|$}_{t=0}} \label{domina4} \\
& = \big[B \widetilde \phi\big(0,\gamma(0)\big)\big]_i -
\partial_t \widetilde \phi_i\big(0,\gamma(0)\big)- \big\langle D_x
\widetilde \phi_i\big(0,\gamma(0)\big),\dot\gamma(0)\big\rangle. \nonumber\\
 &= \big[B \phi(s_0,x_0)\big]_i
+ \partial_t \psi(s_0,x_0)-\langle D_x\psi(s_0,x_0),q\rangle \nonumber
\\ &\geqslant  \big [ B \uu^*(s_0,x_0)\big ]_i
+ \partial_t \psi(s_0,x_0) + \langle D_x\psi(s_0,x_0),- q\rangle -
\delta \, C, \nonumber
\end{align}
where $C =  - \sum_{j \neq i} b_{ij}$. We know that   $\PP_i$--a.e.
path  $\omega$   takes the value $i$ in a suitable right
neighborhood of $0$, depending on $\omega$. From this and the
continuity of $L_i$,  we deduce for $\PP_i$--a.e. $\omega$
\begin{eqnarray*}
   && \lim_n \frac 1{h_n} \, \int_0^{h_n}
L_{\omega(s)}  (\gamma_n(s),- q )\,\dd s  =  \\
   && \lim_n \frac 1{h_n} \, \int_0^{h_n}
L_i  (\gamma_n(s),-  q )\,\dd s \\ && \lim_n \frac 1{h_n} \,
\int_0^{h_n} L_i  (\gamma(s),-  q )\,\dd s +  \frac 1{h_n} \,
\int_0^{h_n} \big ( L_i (\gamma_n(s),-  q ) - L_i(\gamma(s), - q)
\big )\,\dd s \\ &&  = L_i(x_0,- q).
\end{eqnarray*}
By  the Dominated Convergence Theorem and \eqref{domina5}, we thus
infer
\[\lim_{n\to +\infty} \frac 1{h_n} \,
 \EE_i\left[ \frac 1{h_n} \, \int_0^{h_n}
L_{\omega(s)}  (\gamma_n(s),- q )\,\dd s  + \eps_n\right] =
L_i(x_0,- q) .\]
 We further derive from \eqref{domina5}
\[ \big(B \uu^*(s_0,x_0)\big)_i  + \partial_t
\psi(s_0,x_0)+\langle D_x\psi(s_0,x_0),- q\rangle - L_i(x_0,-q) -
\delta \, C \leqslant 0,\]  and, being  $q$, $\delta$  arbitrary, we
finally obtain
\[ \big(B\uu^*(s_0,x_0)\big)_i  + \partial_t
\psi(s_0,x_0)+ H_i\big(x_0,D_x \psi(s_0,x_0)\big)\leqslant 0,\] which proves the
claimed subsolution property for $\uu^*$.
\end{proof}

\smallskip

By combining   Proposition \ref{marzia} and Theorem \ref{teo domination} we obtain

\smallskip

\begin{prop}\label{prop domination}
Let $\uu^0:\R^N\to\R^m$ be  bounded and continuous. The function
$\vv(t,x):= \S(t) \uu^0(x)$ is a  subsolution of
\eqref{HJS} satisfying $\vv^*(0,\cdot) \leqslant \uu^0$ on $\R^N$.
\end{prop}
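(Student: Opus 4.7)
The plan is to combine the two structural results just proved: Proposition \ref{marzia} supplies the sub--optimality principle for the Lax--Oleinik semigroup, and Theorem \ref{teo domination} converts such a principle into the viscosity subsolution property. So the only work is to check the hypotheses of Theorem \ref{teo domination} and then to deal separately with the boundary trace $\vv^*(0,\cdot) \leqslant \uu^0$.

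First I would verify that $\vv$ is locally bounded from above on $(0,+\infty)\times\R^N$. This is immediate from the two--sided bound \eqref{ineq S(t)}: since $\uu^0$ is bounded, the right--hand side $\e^{-Bt}\uu^0(y)+tM\,\1$ is bounded on any compact subset of $\R_+\times\R^N$, so $\vv$ is even globally bounded on strips $[0,T]\times\R^N$. Combined with Proposition \ref{marzia}, the hypotheses of Theorem \ref{teo domination} are satisfied, and we conclude that $\vv$ is a (viscosity) subsolution to \eqref{HJS} on $(0,+\infty)\times\R^N$.

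For the initial condition, I would again start from the upper bound in \eqref{ineq S(t)}, which for each index $i$ reads
\[
v_i(t,y) \;\leqslant\; \sum_{j=1}^m \bigl(\e^{-Bt}\bigr)_{ij}\, u^0_j(y) + tM.
\]
Fix $x\in\R^N$ and take $(t_n,y_n)\to (0,x)$ with $t_n>0$. Since $t\mapsto \e^{-Bt}$ is continuous at $0$ with $\e^{0}=I$, we have $(\e^{-Bt_n})_{ij}\to\delta_{ij}$, while $u^0_j(y_n)\to u^0_j(x)$ by continuity of $\uu^0$ and $t_nM\to 0$. Passing to the $\limsup$ yields $\limsup_n v_i(t_n,y_n) \leqslant u^0_i(x)$, and taking the supremum over all such sequences gives $v^*_i(0,x) \leqslant u^0_i(x)$ for every $x\in\R^N$ and $i\in\ind$, which is the claim.

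The only subtle point is really the first one: one has to be sure that Theorem \ref{teo domination} applies even though $\vv$ is not a priori continuous. But that theorem is stated precisely for functions only locally bounded from above, so no additional regularity of $\vv$ is required; the initial trace inequality is then handled entirely by the elementary deterministic upper bound and the continuity of $\uu^0$, with no need to analyze minimizing curves as $t\to 0^+$.
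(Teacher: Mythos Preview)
Your proof is correct and follows exactly the paper's approach: local boundedness from \eqref{ineq S(t)}, then Proposition \ref{marzia} plus Theorem \ref{teo domination} for the subsolution property, and finally the rightmost inequality in \eqref{ineq S(t)} together with continuity of $\uu^0$ and of $t\mapsto \e^{-Bt}$ for the initial trace. The paper's proof is just a terser version of what you wrote.
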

\begin{proof} The  asserted subsolution property comes from  $\vv$ being locally bounded in force of \eqref{ineq S(t)},
 and  Proposition \ref{marzia},  Theorem \ref{teo domination}. The relation  at $t =0$ is readily obtained passing to the limit
 in the rightmost inequality of  \eqref{ineq S(t)} as $t$ goes to $0$.
\end{proof}

\smallskip

We proceed by showing a sort of maximality property of the function given by Lax--Oleinik formula.

\smallskip

\begin{prop}\label{pro dominationbis}
Let $\uu$ be a locally Lipschitz continuous subsolution of
\eqref{HJS}  with $\uu^0:= \uu(0,\cdot)$ bounded, then
\[ \uu(t,x) \leqslant \S(t) \uu^0(x) \qquad\hbox{for any $(t,x) \in \R_+ \times \R^N$.}\]
\end{prop}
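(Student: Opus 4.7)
The plan is to mirror the argument behind Theorem~\ref{teo domination}, but this time using a Lipschitz \emph{subsolution} $\uu$ rather than a function satisfying only the sub-optimality principle. Fix $t>0$, $x\in\R^N$, $i\in\ind$, and an admissible curve $\gamma$ with $\gamma(0,\omega)=x$; we may assume that the expectation appearing in \eqref{random Lax-Oleinik formula} for this $\gamma$ is finite, otherwise the desired inequality is trivial. It suffices to prove
\[
u_i(t,x) \leqslant \EE_i\Big[u^0_{\omega(t)}\big(\gamma(t,\omega)\big) + \int_0^t L_{\omega(s)}\big(\gamma(s,\omega),-\dot\gamma(s,\omega)\big)\,\dd s\Big],
\]
since taking the infimum over $\gamma$ then gives the claim (the case $t=0$ is immediate because $\omega(0)=i$ holds $\PP_i$-almost surely).

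Introduce the time-reversed field $\tilde\uu(\tau,y):=\uu(t-\tau,y)$ on $[0,t]\times\R^N$, which is again locally Lipschitz, and consider the auxiliary function
\[
f(s) := \EE_i\big[\tilde u_{\omega(s)}\big(s,\gamma(s,\omega)\big)\big], \qquad s\in[0,t],
\]
so that $f(0)=u_i(t,x)$ and $f(t)=\EE_i[u^0_{\omega(t)}(\gamma(t,\omega))]$. Proposition~\ref{absolute} guarantees that $f$ is absolutely continuous on $[0,t]$, hence Corollary~\ref{postkey} applied to $\tilde\uu$ and $\gamma$ furnishes a full-measure set of $s\in(0,t)$ for which there exists a measurable selection $\omega\mapsto\bigl(r(s,\omega),p(s,\omega)\bigr)\in\partial^C\tilde u_{\omega(s)}\big(s,\gamma(s,\omega)\big)$ with
\[
f'(s) = \EE_i\Big[-(B\tilde\uu)_{\omega(s)}\big(s,\gamma(s,\omega)\big) + r(s,\omega) + \langle p(s,\omega),\dot\gamma(s,\omega)\rangle\Big].
\]

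The bridge to the subsolution hypothesis is the time-reversal identity $\partial^C\tilde u_j(\tau,y) = \{(-\rho,p)\,:\,(\rho,p)\in\partial^C u_j(t-\tau,y)\}$: consequently $\bigl(-r(s,\omega),p(s,\omega)\bigr)\in\partial^C u_{\omega(s)}\bigl(t-s,\gamma(s,\omega)\bigr)$, and Proposition~\ref{prop subsol equivalent def}(iii) applied to $\uu$ yields
\[
-r(s,\omega) + H_{\omega(s)}\big(\gamma(s,\omega),p(s,\omega)\big) + (B\tilde\uu)_{\omega(s)}\big(s,\gamma(s,\omega)\big) \leqslant 0.
\]
Combined with the Fenchel inequality $H_j(y,p)+\langle p,q\rangle \geqslant -L_j(y,-q)$ used at $q=\dot\gamma(s,\omega)$, this gives $f'(s) \geqslant -\EE_i\bigl[L_{\omega(s)}(\gamma(s,\omega),-\dot\gamma(s,\omega))\bigr]$ for a.e.\ $s\in(0,t)$. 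Integrating on $[0,t]$ and exchanging the order of integration via Fubini -- licit under the finiteness assumption on the right-hand side of \eqref{random Lax-Oleinik formula} together with the uniform bounds from Remark~\ref{oss admissible curve} -- one arrives at $f(0) \leqslant f(t) + \EE_i[\int_0^t L_{\omega(s)}(\gamma,-\dot\gamma)\,\dd s]$, which is the desired estimate. The main technical point is keeping the signs consistent under the time reversal $\uu\mapsto\tilde\uu$ so that the Clarke subdifferential inequality meshes correctly with Corollary~\ref{postkey}; once that bookkeeping is done, the remainder is a routine Fenchel-plus-integration computation.
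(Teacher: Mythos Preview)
Your proof is correct and follows essentially the same route as the paper's: both apply Corollary~\ref{postkey} to the time-reversed function $\uu(t-\cdot,\cdot)$ along an arbitrary admissible curve, invoke the Clarke-subsolution characterization of Proposition~\ref{prop subsol equivalent def}(iii) together with the Fenchel inequality to bound the derivative, and then integrate. Your version is slightly more explicit about the time-reversal bookkeeping for the Clarke gradient, but the argument is otherwise identical.
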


\begin{proof}
 We fix $(t_0,x_0)$ and pick an admissible curve $\gamma$  with initial point $x$. By
applying Corollary \ref{postkey}, we get
\begin{align*}
&\frac{\dd}{\dd
t}\EE_i \left [u_{\omega(t)}\big(t_0 -t,\gamma(t,\omega)\big) \right ]_{\mbox{\Large
$|$}_{t=s}}\\
&\qquad\qquad= \EE_i \left
[-\big[B\uu\big(t_0 -s,\gamma(s,\omega)\big)\big]_{\omega(s)} - r(s,\omega)+\langle
p(s),\dot\gamma(s,\omega)\rangle \right ]
 \end{align*}
for a.e. $s$ and some $\big(r(s,\omega),p(s,\omega)\big) \in \partial^C
u_{\omega(s)}\big(t_0+h-s, \gamma(s,\omega)\big)$. By exploiting the subsolution
property of $\uu$ and the Fenchel inequality, we further get
\begin{eqnarray}
&-&\! \! \!  \frac{\dd}{\dd
t}\,\EE_i \big[u_{\omega(t)}\big(t_0 -t,\gamma(t,\omega)\big) \big]_{\mbox{\Large $|$}_{t=s}}   \nonumber\\
&=& \EE_i \left [\big[B\uu\big(t_0  -s,\gamma(s,\omega)\big)\big]_{\omega(s)} +
r(s,\omega) +\langle  p(s,\omega),- \dot\gamma(s,\omega)\rangle \right ] \nonumber\\
  &\leqslant&\! \EE_i \Big
[\big[B\uu(t_0 -s,\gamma(s,\omega)\big)\big]_{\omega(s)} + r(s,\omega) +
H_{\omega(s)}\big(\gamma(s,\omega),p(s,\omega)\big) \nonumber
\\
&&\qquad+  L_{\omega(s)}\big(\gamma(s,\omega),- \dot\gamma(s,\omega)\big)\Big ]   \nonumber \\
&\leqslant&
 \EE_i \left [L_{\omega(s)}\big(\gamma(s,\omega),- \dot\gamma(s,\omega)\big)\right]. \label{Fenchel}
\end{eqnarray}
We finally obtain  by integrating between $0$ and $t_0$  and by commuting integrals, which can be
done by joint measurability properties of $(\gamma,\dot\gamma)$
\[ u_i\big(t_0 ,x_0\big) - \EE_i  \big[u^0_{\omega(t_0)}\big(\gamma(t_0,\omega)\big)\big] \leqslant
\EE_i\left[ \int_0^{t_0}
L_{\omega(s)}\big(\gamma(s,\omega),-\dot\gamma(s,\omega)\big)   \,\dd s \right].\]
This gives the assertion for $\gamma$ is an arbitrary admissible curve starting at $x_0$.
\end{proof}

\

We finally provide the announced PDE characterization of the  random Lax--Oleinik formula.

\begin{teorema}
Let $\uu^0\in\big(\D{BUC}(\R^N)\big)^m$. Then
\ $(x,t) \mapsto \big(\S(t)\uu^0\big)(x)$\
is the unique solution of\eqref{HJS} in $(0.+ \infty) \times \R^N$ agreeing with $\uu^0$ at $t =0$ and belonging to
 $\big(\D{BUC}(\clTcyl)\big)^m$ for every $T>0$.
\end{teorema}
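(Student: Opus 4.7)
The plan is to identify $(t,x) \mapsto \big(\S(t)\uu^0\big)(x)$ with the unique BUC solution $\uu$ of \eqref{HJS} starting at $\uu^0$ provided by Theorem \ref{teo existence evo wcs}, from which every assertion of the statement follows at once. The argument proceeds in two stages: first for Lipschitz continuous initial data, by combining the sub-optimality analysis of Propositions \ref{prop domination} and \ref{pro dominationbis}, and then for general BUC data by a uniform approximation argument based on the non-expansiveness estimate \eqref{nonexpa}.

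Assume first that $\uu^0\in\vLip$. By Theorem \ref{teo existence evo wcs}, the unique solution $\uu$ lies in $\tvLip$ for every $T>0$, hence is locally Lipschitz on $\clcyl$ and is in particular a locally Lipschitz subsolution of \eqref{HJS}. Proposition \ref{pro dominationbis} then gives $\uu(t,x)\leqslant \big(\S(t)\uu^0\big)(x)$ for every $(t,x)\in\clcyl$. For the reverse inequality, set $\vv(t,x):=\big(\S(t)\uu^0\big)(x)$. By Proposition \ref{prop domination}, $\vv$ is a subsolution of \eqref{HJS} with $\vv^*(0,\cdot)\leqslant \uu^0$, and by \eqref{ineq S(t)} both $\vv$ and its USC envelope $\vv^*$ are bounded on every $\clTcyl$. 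Since $\vv^*$ is upper semicontinuous and, by the very definition of viscosity subsolution via envelopes, is itself a subsolution, and since $\uu$ is a continuous bounded supersolution with $\uu(0,\cdot)=\uu^0\in\vBUC$, Proposition \ref{prop comparison} yields
\[
\vv^*(t,x)-\uu(t,x)\;\leqslant\;\max_{1\leqslant i\leqslant m}\sup_{\R^N}\big(\vv^*(0,\cdot)-\uu(0,\cdot)\big)_i\;\leqslant\;0.
\]
Thus $\big(\S(t)\uu^0\big)(x)\leqslant \vv^*(t,x)\leqslant \uu(t,x)$, which combined with the previous bound gives the identity $\S(\cdot)\uu^0\equiv \uu$ when $\uu^0$ is Lipschitz.

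For a general $\uu^0\in\vBUC$, pick a sequence of Lipschitz functions $\uu^{0,n}$ converging uniformly to $\uu^0$ on $\R^N$ (e.g.\ by mollification), and let $\uu_n$ denote the unique BUC solution of \eqref{HJS} starting at $\uu^{0,n}$. The Lipschitz case just proved gives $\uu_n(t,\cdot)=\S(t)\uu^{0,n}$. On the one hand, the non-expansiveness property \eqref{nonexpa} implies $\S(t)\uu^{0,n}\to\S(t)\uu^0$ uniformly on $\clcyl$; on the other hand, a double application of the comparison estimate in Proposition \ref{prop comparison} to $\uu_n-\uu$ and $\uu-\uu_n$ gives $\uu_n\to\uu$ uniformly on every $\clTcyl$. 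Letting $n\to+\infty$ identifies $\S(\cdot)\uu^0=\uu$, whence $\S(\cdot)\uu^0$ inherits from $\uu$ the $\vBUC$ regularity on each $\clTcyl$ as well as the initial trace $\uu^0$. Uniqueness is a direct consequence of Proposition \ref{prop comparison}.

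The most delicate point in the plan is the comparison step in the Lipschitz case: one needs to pass from the possibly discontinuous function $\vv$ to its upper semicontinuous envelope $\vv^*$ while retaining both the subsolution property and the boundary inequality $\vv^*(0,\cdot)\leqslant\uu^0$. The first is built into Definition \ref{defvisco}; the second is precisely the content of Proposition \ref{prop domination}, which is itself extracted from the sub-optimality principle through the rightmost estimate in \eqref{ineq S(t)}.
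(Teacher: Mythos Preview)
Your proof is correct and follows essentially the same approach as the paper: in the Lipschitz case you combine Proposition~\ref{pro dominationbis} with Proposition~\ref{prop domination} plus comparison (Proposition~\ref{prop comparison}) to sandwich $\S(t)\uu^0$ and $\uu$, and in the general BUC case you pass to the limit along Lipschitz approximants via \eqref{nonexpa} and comparison. The only cosmetic differences are that the paper uses a single Lipschitz comparison function $\vv^0$ rather than a sequence, and that you spell out explicitly the passage through the upper semicontinuous envelope $\vv^*$ when invoking the comparison principle.
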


\begin{proof}
We denote by $\uu$ the unique solution of the system taking the value $\uu^0$ at $t =0$ and belonging to
 $\big(\D{BUC}(\clTcyl)\big)^m$ for every $T>0$, see Theorem \ref{teo existence evo wcs}.

Let us first assume $\uu^0$ Lipschitz continuous on $\R^N$. Then $\uu$ is Lipschitz continuous in $\clTcyl$, for every $T >0$,
according to Theorem \ref{teo existence evo wcs}. In view of Proposition \ref{prop domination}  and the comparison principle for \eqref{HJS} stated
in Proposition \ref{prop comparison},  we infer
\begin{eqnarray*}
\big(\S(t)\uu^0\big)(x)\leqslant \uu(t,x)\qquad\hbox{for every $(t,x)\in\clTcyl$.}
\end{eqnarray*}
The opposite inequality holds as well by Proposition \ref{pro dominationbis}. The assertion is then proved when the initial datum
is additionally assumed Lipschitz continuous.

Let us now consider the general case $\uu^{0}\in\vBUC$. Let $\vv^0$ be a Lipschitz function in $\vBUC$.
By what was just proved, we know that the map $(t,x)\mapsto\big(\S(t)\vv^0\big)(x)$ is a Lipschitz solution of \eqref{HJS}
in $\Tcyl$ taking the initial value $\vv^0$ at $t=0$.
From the comparison principle stated in Proposition \ref{prop comparison}  and \eqref{nonexpa}, we infer
\begin{eqnarray*}
\|\uu(t,\cdot)-\S(t)\uu^0\|_\infty \leqslant \|\uu(t,\cdot)-\S(t)\vv^0\|_\infty +
\|\S(t)\uu^0-\S(t)\vv^0\|_\infty\leqslant 2 \,  \|\uu^0-\vv^0\|_\infty
\end{eqnarray*}
for every $t>0$.  Using the fact that Lipschitz initial data are dense in $\vBUC$, we eventually
get the asserted identity \ $\uu(t,x)=\S(t)\uu^0(x)$\ for every
$(t,x)\in [0, + \infty)\times\R^N$.
\end{proof}

\smallskip

We  directly derive  from the previous results of the section
\smallskip

\begin{cor}
The Lax--Oleinik formula defines a semigroup of operators on both $\big(\D{BUC}(\R^N)\big)^m$ and
$\left(\D{Lip}(\R^N)\right)^m \bigcap \big(\D{BUC}(\R^N)\big)^m$.
\end{cor}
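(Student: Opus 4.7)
The plan is to reduce the two required properties, space invariance and the algebraic identity $\S(t+s)=\S(t)\circ\S(s)$, to the main theorem just proved, which identifies $\S(t)\uu^0$ with the unique HJS solution having initial datum $\uu^0$, together with Theorem \ref{teo existence evo wcs} (regularity of solutions).

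For space invariance, fix $\uu^0\in\vBUC$. The preceding theorem says that $(t,x)\mapsto \S(t)\uu^0(x)$ belongs to $\big(\D{BUC}(\clTcyl)\big)^m$ for every $T>0$; hence, slicing at any fixed $t\geqslant 0$, the function $\S(t)\uu^0$ lies in $\vBUC$. If in addition $\uu^0\in\vLip$, then Theorem \ref{teo existence evo wcs} yields that the solution is Lipschitz on $\clTcyl$ for every $T>0$ with a constant that depends only on the data and on $T$, so that $\S(t)\uu^0\in\vLip\cap\vBUC$ for every $t\geqslant 0$.

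For the algebraic semigroup identity, fix $s\geqslant 0$ and $\uu^0\in\vBUC$, and set $\vv^0:=\S(s)\uu^0$, which lies in $\vBUC$ by the previous paragraph. Consider on $(0,+\infty)\times\R^N$ the two functions
\[
 \uu_1(t,x):=\S(t+s)\uu^0(x),\qquad \uu_2(t,x):=\S(t)\vv^0(x).
\]
By the preceding theorem, $\uu_2$ is the unique HJS solution in $\big(\D{BUC}(\clTcyl)\big)^m$ taking the value $\vv^0$ at $t=0$. On the other hand, writing $\uu(r,x):=\S(r)\uu^0(x)$, the function $\uu_1(t,x)=\uu(t+s,x)$ inherits from $\uu$ the property of solving \eqref{HJS} on $(0,+\infty)\times\R^N$ (since HJS is autonomous in $t$) and of belonging to $\big(\D{BUC}(\clTcyl)\big)^m$ for every $T>0$; moreover $\uu_1(0,\cdot)=\uu(s,\cdot)=\S(s)\uu^0=\vv^0$. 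The uniqueness part of Theorem \ref{teo existence evo wcs} (equivalently, the comparison principle in Proposition \ref{prop comparison}) therefore forces $\uu_1\equiv \uu_2$, which is precisely $\S(t+s)\uu^0=\S(t)\bigl(\S(s)\uu^0\bigr)$. Finally, the identity $\S(0)=\mathrm{Id}$ on $\vBUC$ follows from the continuity in $t$ up to $t=0$ of the solution $(t,x)\mapsto\S(t)\uu^0(x)\in\big(\D{BUC}(\clTcyl)\big)^m$, which forces $\S(t)\uu^0\to\uu^0$ uniformly as $t\to 0^+$.

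There is essentially no serious obstacle at this point: the genuine work was performed in establishing that $\S(t)\uu^0$ coincides with the unique HJS solution of given initial data, so the semigroup property drops out from uniqueness applied to the time‑translated solution, and the invariance of the two functional spaces is exactly the content of Theorem \ref{teo existence evo wcs}. The only mild subtlety to check carefully is that $\uu_1$ really does lie in $\big(\D{BUC}(\clTcyl)\big)^m$ for every $T>0$ (which is immediate by considering $\uu$ on $[s,s+T]\times\R^N$) and, in the Lipschitz case, that the Lipschitz constant of $\S(t)\uu^0$ is controlled uniformly for $t$ in bounded intervals, which again is guaranteed by Theorem \ref{teo existence evo wcs}.
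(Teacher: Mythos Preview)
Your proposal is correct and follows exactly the approach the paper intends: the corollary is stated without proof, with the remark that it is ``directly derived from the previous results of the section,'' and you have correctly spelled out how the space invariance comes from Theorem \ref{teo existence evo wcs} while the semigroup identity follows from uniqueness applied to the time-translated solution of the autonomous system.
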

\medskip
\end{section}

\begin{section}{Minimal admissible curves}\label{minimo}

In this section we aim to prove the following result:
\smallskip

\begin{teorema}\label{butterfly}
Let $\uu^0\in\vBUC$  and $T>0$. Assume that the function $(t,x)
\mapsto  \S(t)\uu^0(x)$ is locally Lipschitz in $(0,T] \times \R^N$.
Then, for every  $x \in \R^N$, $i\in\ind$,   there  exists an
admissible curve $\eta:\Omega\to\curves$, starting at $x$,
realizing the minimum for $(\S(T)\uu^0)_i(x)$ in the Lax--Oleinik
formula \eqref{random Lax-Oleinik formula}.
\end{teorema}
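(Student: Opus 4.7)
The plan is to use the dynamic programming principle (DPP) — which follows from the semigroup property of $\S$ established at the end of Section \ref{LO} — to reduce the problem to a sequence of deterministic variational problems on the segments between successive jumps of $\omega$, and then to build the random minimizer by a measurable selection across histories. Throughout, write $w(t,x) := \S(t)\uu^0(x)$; by hypothesis $w$ is locally Lipschitz on $(0,T]\times\R^N$, and by \eqref{ineq S(t)} it is uniformly bounded on $[0,T]\times\R^N$.

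Fix $i\in\ind$ and $x\in\R^N$, and let $\tau_1(\omega)$ denote the first jump time of $\omega$ under $\PP_i$. Conditioning on this first jump and using the DPP to replace the expected continuation cost by $w$ evaluated at the jump point reduces the choice of the curve on $[0,\tau_1\wedge T]$ to minimizing, over deterministic curves $\eta\colon[0,T]\to\R^N$ with $\eta(0)=x$, the functional
\[
F_i[\eta] := \EE_i\Big[\mathbf 1_{\tau_1 > T}\big(u^0_i(\eta(T)) + \int_0^T L_i(\eta,-\dot\eta)\,\dd s\big) + \mathbf 1_{\tau_1 \leqslant T}\big(w_{\omega(\tau_1)}(T-\tau_1,\eta(\tau_1)) + \int_0^{\tau_1} L_i(\eta,-\dot\eta)\,\dd s\big)\Big].
\]
Integrating out $\tau_1$ (which has an explicit exponential density under $\PP_i$ and an explicit post-jump distribution on $\ind$ determined by the coefficients of $B$) rewrites $F_i$ as a weighted classical action plus continuous boundary terms built from $w$ and $u^0$. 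Existence of a minimizer $\eta_i$ follows by the direct method: a minimizing sequence has uniformly bounded $\int_0^T \Theta(|\dot\eta_n|)\,\dd s$ by (H3) and the uniform boundedness of $u^0$ and $w$, and passing to a weakly $L^1$-convergent subsequence of derivatives, Tonelli's lower-semicontinuity theorem under (H2), combined with the Lipschitz continuity of $u^0$ and $w$ in the spatial variable, yields lower semicontinuity of $F_i$.

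Having fixed $\eta_i$, I iterate: on the event $\{\tau_1 = t,\ \omega(\tau_1)=j\}$, the remaining problem on $[t,T]$ is of the same form with initial data $(j,t,\eta_i(t))$, so the previous step provides a deterministic minimizing curve $\eta^{(t,j)}$ on $[t,T]$; iterating through the (almost surely finite) successive jumps produces, for every admissible history $h=(i_0,t_1,i_1,\dots,t_k,i_k)$, a deterministic minimizer $\eta^h$ on $[t_k,T]$. At each stage the set of minimizers is nonempty and compact in $\D C([0,T];\R^N)$, and upper semicontinuous in the history parameters by continuous dependence of the action and of $w$ on the initial data; hence Theorem \ref{CastaingValadier} furnishes measurable selections stage by stage, and concatenating them along $\omega$ defines a nonanticipating random curve $\eta\colon\Omega\to\curves$ with $\eta(0,\cdot)\equiv x$, whose cost equals $w_i(T,x)$ by the DPP applied successively through the jumps.

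The main obstacles are twofold. First, one must verify uniform-in-$\omega$ absolute continuity of the concatenation (item (i) in Definition \ref{def admissible curve}), which amounts to an $\omega$-independent bound on $\int_0^T \Theta(|\dot\eta(s,\omega)|)\,\dd s$; this follows from the boundedness of $u^0$ and $w$ together with the superlinearity in (H3), since on each segment the energy of the minimizing curve is controlled by the total jump of $w$ along the segment plus $\|L_i(\cdot,0)\|_\infty$, uniformly in the history. Second, one must establish upper semicontinuity of the set-valued map of minimizers with respect to the history parameters, which, beyond the standard lower-semicontinuity argument, requires uniform continuity of $w$ on bounded spatial sets — available from its local Lipschitzness together with the global sup bound \eqref{ineq S(t)}.
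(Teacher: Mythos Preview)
Your approach differs substantially from the paper's, and the central step has a real gap.

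The paper does not condition on the first jump time and does not invoke a DPP at the random stopping time $\tau_1$. Instead it exploits a simpler deterministic decomposition: since $\uu(t,x):=\S(t)\uu^0(x)$ solves \eqref{HJS}, each component $u_j$ is a viscosity solution of the \emph{scalar} equation $\partial_t u + G_j(t,x,D_x u)=0$ with $G_j:=H_j+(B\uu)_j$. On every interval where $\omega$ is constant equal to $j$, the paper takes the calibrated curve for this scalar problem (Proposition~\ref{prop deterministic Lax}); these curves satisfy the pointwise identity $-\tfrac{\dd}{\dd t}u_{\omega(s)}\big(T-t,\eta(t,\omega)\big)=L_{G_{\omega(s)}}(\cdots)$ (Lemma~\ref{lemma random minimization 1}). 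Optimality of the concatenation is then verified \emph{directly} by the differentiation formula of Theorem~\ref{teo key}: one computes $\tfrac{\dd}{\dd t}\EE_i\big[u_{\omega(t)}(T-t,\eta(t,\omega))\big]$, the $(B\uu)_{\omega(s)}$ term from Theorem~\ref{teo key} cancels the coupling part of $L_{G_j}$, and integration on $[h,T]$ followed by $h\to 0^+$ yields the equality. No DPP at a random time is ever needed, and the only place the local Lipschitz hypothesis on $\S(t)\uu^0$ enters is to make Theorem~\ref{teo key} applicable on $[h,T]$.

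Your argument, by contrast, hinges on the identity $w_i(T,x)=\inf_\eta F_i[\eta]$ with the infimum over \emph{deterministic} curves --- that is, a dynamic programming principle at the stopping time $\tau_1$. You say this ``follows from the semigroup property of $\S$ established at the end of Section~\ref{LO}'', but that corollary gives only $\S(t+s)=\S(t)\S(s)$ for \emph{deterministic} $t,s$. Upgrading to the random time $\tau_1$ requires the strong Markov property of the chain together with a measurable gluing of near-optimal continuation curves parameterized by $\big(\tau_1,\omega(\tau_1),\eta(\tau_1)\big)$; neither ingredient is developed in the paper, and you have not supplied the argument. Without that identity, the claim that $\min F_i=w_i(T,x)$ and the final verification ``by the DPP applied successively through the jumps'' are both unjustified. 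Two smaller points: you invoke Lipschitz continuity of $\uu^0$, but the hypothesis is only $\uu^0\in\vBUC$; and your $\omega$-uniform equi-absolute-continuity argument bounds the action segment by segment, yet the number of jumps of $\omega$ in $[0,T]$ is unbounded, so summing per-segment bounds does not give an $\omega$-uniform estimate --- the paper instead obtains a single equi-absolutely-continuous family $X_j^T$ of curves defined on the whole of $[0,T]$ (Proposition~\ref{prop deterministic curves}(i)) and concatenates restrictions of those.
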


\begin{oss}
The assumption of the above theorem is always satisfied whenever $\uu^0$
is Lipschitz continuous on $\R^N$, in view of Theorem \ref{teo existence evo wcs}.\\
\end{oss}

\subsection{Deterministic minimization}\label{sez deterministic minimization}
Let $\uu^0\in\vBUC$ and $T>0$ be fixed, and denote by $\uu(t,x)$ the unique function in $\tvBUC$ that solves
the system \eqref{HJS} in $\Tcyl$ subject to the initial condition $\uu(0,\cdot)=\uu^0$ in $\R^N$. We know that,
for every $j\in\ind$, the $j$--th component $u_j$ of $\uu$ is a solution to
\begin{equation}\label{eq G}
\frac{\partial u}{\partial t} + G_j(t,x,D_x u) = 0\qquad\hbox{in
$(0,+\infty)\times\R^N$}.
\end{equation}
with initial datum $u^0_j$, where
\[G_j(t,x,p) = H_j(x,p) +  (B\uu)_j(t,x).
\]
 Let us denote by $L_{G_j}= L_j - (B\uu)_j$ the Lagrangian associated with $G_j$ via the Fenchel transform.
%
The following result holds:

\begin{prop}\label{prop deterministic Lax}
Let $u_j$ and $G_j$ be as above. Then, for every $0\leqslant a
\leqslant T$ and $y\in\R^N$, the following identity holds:
\begin{equation}\label{eq Lax-Oleinik GG}
u_j(T-a,y)=\inf_{\xi(a)=y}\Big( u^{0}_j\big(\xi(T)\big)+\int_a^T L_{G_j}\big(T-t,\xi (t), -\dot \xi (t) \big) \dd t\Big),
\end{equation}
where the infimum is taken by letting $\xi$ vary in the family of absolutely continuous curves from $[a,T]$ to $\R^N$. Moreover, such an infimum is a minimum.
\end{prop}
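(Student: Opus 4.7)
\medskip

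\noindent\textbf{Proof proposal.} The plan is to recognize \eqref{eq Lax-Oleinik GG} as the classical Lax--Oleinik representation for a scalar, time-dependent Hamilton--Jacobi equation, composed with a time reversal, and then conclude the attainment of the infimum by Tonelli's direct method.

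First I would check that $G_j(t,x,p) = H_j(x,p) + (B\uu)_j(t,x)$ satisfies the hypotheses needed to apply the standard scalar theory on $\Tcyl$. Continuity of $G_j$ in $(t,x,p)$ follows from (H1) and the fact that $(B\uu)$ is bounded and uniformly continuous on $\clTcyl$ (as $\uu\in\tvBUC$); convexity in $p$ is inherited from (H2); and since $\|B\uu\|_{L^\infty(\clTcyl)}$ is finite, (H3) gives the superlinear bounds
\[
\alpha(|p|) - \|B\uu\|_{L^\infty(\clTcyl)} \leqslant G_j(t,x,p) \leqslant \beta(|p|) + \|B\uu\|_{L^\infty(\clTcyl)}.
\]
Correspondingly, $L_{G_j}(t,x,q) = L_j(x,q) - (B\uu)_j(t,x)$ is continuous in $(t,x,q)$, convex and superlinear in $q$, and the initial datum $u^0_j$ is in $\D{BUC}(\R^N)$.

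Second, I would invoke the classical Lax--Oleinik representation for the scalar equation \eqref{eq G} with datum $u^0_j$: for any $\tau\in[0,T]$ and $y\in\R^N$,
\[
u_j(\tau,y) = \inf_{\zeta(\tau) = y}\Big(u^0_j\big(\zeta(0)\big) + \int_0^\tau L_{G_j}\big(s,\zeta(s),\dot\zeta(s)\big)\,\dd s\Big),
\]
the infimum running over absolutely continuous $\zeta:[0,\tau]\to\R^N$. Under the hypotheses verified above this is standard (see e.g.\ the monograph of Bardi--Capuzzo Dolcetta); alternatively, both inequalities follow from arguments entirely parallel to those of Propositions \ref{marzia} and \ref{pro dominationbis}, specialized to the deterministic scalar case (where admissible curves reduce to ordinary absolutely continuous curves and the expectation operator disappears). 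Setting $\tau = T-a$ and performing the time reversal $\xi(t) := \zeta(T-t)$ for $t\in[a,T]$, so that $\xi(a) = \zeta(T-a) = y$, $\xi(T) = \zeta(0)$, and $\dot\xi(t) = -\dot\zeta(T-t)$, the substitution $s = T-t$ in the integral yields exactly the right-hand side of \eqref{eq Lax-Oleinik GG}.

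Third, for attainment I would apply Tonelli's direct method to the functional
\[
\mathcal F(\xi) := u^0_j\big(\xi(T)\big) + \int_a^T L_{G_j}\big(T-t,\xi(t),-\dot\xi(t)\big)\,\dd t
\]
on $\{\xi\in AC([a,T];\R^N) : \xi(a) = y\}$. Given a minimizing sequence $(\xi_n)$, the uniform bound on $\mathcal F(\xi_n)$, the boundedness of $u^0_j$ and of $(B\uu)_j$, and the superlinearity of $L_j$ provided by (H3) give, via the de la Vall\'ee--Poussin criterion, equi-integrability of $\{\dot\xi_n\}$ in $L^1([a,T];\R^N)$. By Arzel\`a--Ascoli a subsequence converges uniformly to some $\xi^*$ with $\xi^*(a) = y$, while $\dot\xi_n\weakcv\dot\xi^*$ in $L^1$; lower semicontinuity of the integral functional with respect to such convergence, which holds because $L_{G_j}$ is convex in the velocity variable and continuous in $(t,x)$ (Ioffe's theorem), combined with the continuity of $u^0_j$, gives $\mathcal F(\xi^*)\leqslant\liminf_n \mathcal F(\xi_n)$, so $\xi^*$ is a minimizer.

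The main obstacle is the first step: verifying the scalar Lax--Oleinik formula cleanly for the time-dependent Hamiltonian $G_j$ with initial datum in $\D{BUC}(\R^N)$. Since however the hard analytical work for the genuinely random, nonlinear setting has already been carried out in Section \ref{LO}, this reduces either to a direct citation of the standard theory or to a routine specialization of the arguments already developed.
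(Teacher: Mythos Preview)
Your argument is correct, and the time-reversal and Tonelli steps are exactly what is needed. The route differs from the paper's, though. The paper does not invoke the scalar Lax--Oleinik formula directly for $\D{BUC}$ data; instead it first treats the case where $\uu^0$ is Lipschitz, so that $u_j$ is Lipschitz on $\clTcyl$ and the representation follows from its Appendix result (Proposition~\ref{L.O.t}), which is stated only for Lipschitz solutions. For general $\uu^0\in\vBUC$ the paper then approximates by a sequence of Lipschitz initial data $\vv^n(0,\cdot)\to\uu^0$, uses the comparison principle to get $\vv^n\to\uu$ uniformly, and checks that the right-hand side of \eqref{eq Lax-Oleinik GG} is stable under this convergence because $L_{G_j}-L_j^n=(B\vv^n-B\uu)_j$ tends to zero uniformly. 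Your approach is shorter and perfectly legitimate if one is willing to quote the scalar result for $\D{BUC}$ data as known; the paper's approach keeps everything self-contained relative to its own Appendix, at the cost of the extra approximation step.

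One small caveat: your fallback justification via ``arguments entirely parallel to those of Propositions~\ref{marzia} and~\ref{pro dominationbis}'' is not quite clean in the general $\D{BUC}$ case, since Proposition~\ref{pro dominationbis} explicitly assumes the subsolution is locally Lipschitz. Your primary citation of standard scalar theory is the right way to close the argument; the reference to Section~\ref{LO} should be dropped or restricted to the Lipschitz case.
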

\begin{oss}\rm
Note that there is a slight difference between \eqref{eq Lax-Oleinik
GG} and the other deterministic formula given in
 Proposition \ref{L.O.t} of the appendix. However, both formulas are equivalent up to the change of variables $s=T-t$.
\end{oss}

\begin{proof}
Let us first assume $\uu^0$ Lipschitz continuous, so that $u_j$ is Lipschitz in $\clTcyl$.
Then the result is a direct application of Proposition \ref{L.O.t}.

In the general case, let $\vv^n : [0,T]\times \R^N \to \R^m$ be a sequence of solutions of \eqref{HJS} with Lipschitz initial data $\vv^n(0,\cdot)$, uniformly converging to $\uu^0$. By Proposition \ref{prop comparison}, $\vv^n$ uniformly converges to $\uu$ on $[0,T]\times \R^N$. Denote by   $U_j(T-a,y)$ the right hand side of  \eqref{eq Lax-Oleinik GG} and denote by $L_j^n$ the Lagrangian $L_j - (B\vv^n)_j$. It is readily verified from the formula and the first part of the proof that,  for
$0\leqslant a\leqslant T$,
\begin{eqnarray*}
|U_j(T-a,y) - v_j^n(T-a,y)|   \leqslant \|\uu_0 - \vv^n(0,\cdot) \|_{\infty} + (T-a)\| L_{G_j} - L_j^n \|_{L^\infty([0,T]\times\R^N)}    \\
  = \|\uu_0 - \vv^n(0,\cdot) \|_\infty + (T-a)\| (B\uu - B\vv^n)_j \|_{L^\infty([0,T]\times\R^N)}.
\end{eqnarray*}
It follows that $\vv^n$ uniformly converges to $\mathbb U :=
(U_j)_{1\leqslant j\leqslant m}$, hence $\mathbb U=\uu$, as it was
to be shown.
\end{proof}

Given  $0\leqslant a \leqslant T$, $y\in\R^N$ and $j\in\ind$, let us
denote by $\Gamma_j(a,y)$ the family of absolutely continuous curves
$\xi:[0,T]\to\R^N$ such that $\xi(s)=y$ for every $s\in [0,a]$ and
$\xi_{{\large |}_{[a,T]}}$ realizes the infimum in \eqref{eq
Lax-Oleinik GG}. In what follows, the space $\curves$ of continuous
curves from the interval $[0,T]$ to $\R^N$ is  endowed with the
uniform norm, which makes it a Polish space, and the corresponding
Borel $\sigma$--algebra. The following holds:

\smallskip

\begin{prop}\label{prop deterministic curves}
Let $j\in\ind$. Then
\begin{itemize}
 \item[(i)] the set\quad $X^T_j:=\{\xi\,:\,\xi\in\Gamma_j(a,y)\ \hbox{for some $(a,y)\in [0,T]\times\R^N$}\,\}$
 is a family of equi-absolutely continuous curves in $\curves$;\smallskip
 \item[(ii)]
 $\Gamma_j(a,y)$ is a compact subset of $\curves$, for every $(a,y)\in (0,T)\times\R^N$;\smallskip
 \item[(iii)] the set--valued map $(a,y)\mapsto \Gamma_j(a,y)$ from $[0,T)\times\R^N$ to $\curves$ is
 upper semicontinuous in the sense of Definition \ref{uscmea};\smallskip
 \item[(iv)] for every $y\in\R^N$, $0\leqslant a\leqslant T$ and $\xi\in\Gamma_j(a,y)$ we have
 \begin{eqnarray*}
    u_j\big(T-t,\xi(t)\big)=u^{0}_j\big(\xi(T)\big)+\int_{t}^T L_{G_j}\big(T-s,\xi (s), -\dot \xi (s) \big) \dd s
 \end{eqnarray*}
{for every $t\in [a,T]$.}
\end{itemize}
\end{prop}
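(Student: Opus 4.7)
The plan is to treat each fixed index $j\in\ind$ separately and regard $u_j$ as a viscosity solution of the scalar equation \eqref{eq G} driven by the continuous, bounded forcing term $(B\uu)_j$. All four items then follow from classical deterministic Lax--Oleinik / Tonelli techniques applied to the Lagrangian $L_{G_j}=L_j-(B\uu)_j$, which is continuous, convex and superlinear in the velocity variable (uniformly on $[0,T]\times\R^N$, since $\uu\in\tvBUC$ and $L_j$ satisfies the analogue of (H3)).

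For item (i), I would first derive a uniform a priori energy bound. Comparing any $\xi\in\Gamma_j(a,y)$ with the constant curve $\eta(t)\equiv y$ in the formula \eqref{eq Lax-Oleinik GG} gives
\[
\int_{a}^{T}L_{G_j}\bigl(T-t,\xi(t),-\dot\xi(t)\bigr)\,\dd t
\leqslant u_{j}^{0}(y)-u_{j}^{0}\bigl(\xi(T)\bigr)+\int_{a}^{T}L_{G_j}(T-t,y,0)\,\dd t
\leqslant C_T,
\]
with $C_T$ independent of $(a,y)$ and of $\xi$. Using the uniform bound on $(B\uu)_j$ and the superlinearity of $L_j$, there is a superlinear convex $\Theta:\R_+\to\R$ with $\sup_{\xi\in X^T_j}\int_0^T\Theta(|\dot\xi(t)|)\,\dd t<+\infty$ (the curves are extended as constants on $[0,a]$). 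The de la Vall\'ee--Poussin criterion then yields equi--absolute continuity of $X^T_j$.

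For (ii), given any sequence $(\xi_n)\subset\Gamma_j(a,y)$, Ascoli--Arzel\`a applied to (i) extracts a uniformly convergent subsequence $\xi_n\to\xi_0$, with $\xi_0(t)=y$ for $t\in[0,a]$. Tonelli's classical lower semicontinuity theorem for integral functionals with Lagrangian convex in the velocity variable gives
\[
\int_{a}^{T}L_{G_j}\bigl(T-t,\xi_0,-\dot\xi_0\bigr)\,\dd t
\leqslant\liminf_{n}\int_{a}^{T}L_{G_j}\bigl(T-t,\xi_n,-\dot\xi_n\bigr)\,\dd t,
\]
and continuity of $u_j^0$ forces $\xi_0\in\Gamma_j(a,y)$. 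For (iii), I would apply the sequential characterization of upper semicontinuity valid for compact--valued maps into a Polish space: take $(a_n,y_n)\to(a_0,y_0)$ and $\xi_n\in\Gamma_j(a_n,y_n)$; items (i) and $\xi_n(a_n)=y_n\to y_0$ again give a uniform limit $\xi_0$ that coincides with $y_0$ on $[0,a_0]$, and the lower semicontinuity above combined with the continuity of $(a,y)\mapsto u_j(T-a,y)$ forces
\[
u_j^{0}\bigl(\xi_0(T)\bigr)+\int_{a_0}^{T}L_{G_j}\bigl(T-t,\xi_0,-\dot\xi_0\bigr)\,\dd t
\leqslant u_j(T-a_0,y_0),
\]
so that $\xi_0\in\Gamma_j(a_0,y_0)$. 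Finally, item (iv) is the classical Bellman principle: for $\xi\in\Gamma_j(a,y)$ and $t\in[a,T]$, the identity \eqref{eq Lax-Oleinik GG} at $(a,y)$ and at $(t,\xi(t))$, together with the obvious inequality obtained by concatenating $\xi|_{[a,t]}$ with any competitor starting at $\xi(t)$, yields the two opposite inequalities that turn \eqref{eq Lax-Oleinik GG} into an equality on $[t,T]$.

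The main obstacle I anticipate is item (iii): I must ensure that the limit curve $\xi_0$ is constant on the full interval $[0,a_0]$ and not merely on $[0,\liminf a_n]$, and that the uniform limits work uniformly over $t$ even when $a_n$ oscillates around $a_0$. This is essentially bookkeeping: the equi--absolute continuity from (i) controls the oscillation of $\xi_n$ on short intervals of length $|a_n-a_0|$, so $\xi_n(a_0)\to y_0$ and the minimization interval $[a_n,T]$ converges to $[a_0,T]$ in a way that respects the lower semicontinuity of the action. Once this point is handled, the classical scheme carries through without further difficulty.
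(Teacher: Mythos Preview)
Your proposal is correct and follows essentially the same route as the paper: a uniform superlinear bound on $\int\Theta(|\dot\xi|)$ for item (i), Arzel\`a--Ascoli plus lower semicontinuity of the action for (ii)--(iii), and the dynamic programming splitting for (iv). The only cosmetic differences are that the paper obtains the bound in (i) directly from the equality $u_j(T-a,y)=u_j^0(\xi(T))+\int_a^T L_{G_j}$ rather than by comparison with the constant curve, and it checks upper semicontinuity via closedness of $\Gamma_j^{-1}(C)$ rather than the sequential criterion; your concern about the limit curve being constant on $[0,a_0]$ is a point the paper leaves implicit, and your equi--absolute continuity argument is the right way to handle it.
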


\begin{proof}
The first point is  standard in Calculus of Variations and is the
first step in establishing Tonelli's existence Theorem. Let us
denote by $\Theta:\R_+\to\R$ a superlinear function such that
\[
L_i(x,q)\geqslant \Theta(|q|)\qquad\hbox{for every $(x,q)\in\R^N\times\R^N$ and $i\in\ind$.}
\]
Such a function $\Theta$ does exist for the Hamiltonians $H_i$
satisfy condition (H3). Since any $\xi\in X_j^T$ is a minimizer of
\eqref{eq Lax-Oleinik GG} for some $(a,y)\in [0,T]\times\R^N$, we
infer
$$
\int_a^T \Theta\big(|\dot\xi(t)|\big)\,\dd t \leqslant \int_a^T
L_{G_j}\big(T-t,\xi (t), -\dot \xi (t) \big) \dd t \leqslant
2\|u_j\|_\infty,$$ yielding
\[
\int_0^T \Theta\big(|\dot\xi(t)|\big)\,\dd t = \int_0^a
\Theta\big(|\dot\xi(t)|\big)\,\dd t + \int_a^T
\Theta\big(|\dot\xi(t)|\big)\,\dd t \leqslant |\Theta(0)| T +2
\|u_j\|_\infty.
\]
This readily implies (i) in view of \cite[Theorem 2.12]{BGH}.

We will prove items (ii) and (iii) by using  Arzel\`a-Ascoli,
Dunford--Pettis theorems (notice that by (i) the elements of $X^T_j$
are equicontinuous) and  the lower semicontinuity
 of $\xi \mapsto \int_a^T L_{G_j}\big(T-t,\xi (t), -\dot \xi (t) \big) \dd t$, see \cite[Theorem 3.6]{BGH}.

Let us prove (iii) first. We have to check that $\Gamma_j$ satisfies
Definition \ref{uscmea}.  Let $C$ be a closed subset of $\curves$,
$(a_n,y_n)$ a sequence of $\Gamma_j^{-1}(C)$ converging to some
$(a,y)$. We consider a sequence  $\xi_n \in \Gamma_j(a_n,y_n) \cap
C$. By the first part of the proof, we can assume that $\xi_n$
converges, up to extracting a subsequence, to some $\xi$ uniformly
in $[0,T]$ and that $\dot \xi_n$ converges to $\dot \xi$ weakly in
$\big(L^1\left([0,T]\right)\big)^N$, see \cite[Theorem 2.13]{BGH}.
Hence, passing to the limit in the equalities
$$u_j(T-a_n,y_n) - u^{0}_j\big(\xi_n(T)\big)= \int_{a_n}^T L_{G_j}\big(T-t,\xi_n (t), -\dot \xi_n (t) \big) \dd t,
$$
and using the lower semicontinuity of the integral functional, it follows that
$$u_j(T-a,y) - u^{0}_j\big(\xi(T)\big)\geqslant \int_a^T L_{G_j}\big(T-t,\xi (t), -\dot \xi (t) \big) \dd t.
$$
Since \eqref{eq Lax-Oleinik GG} gives the reverse inequality, we
deduce that $\xi$ belongs to $\Gamma_j(a,y)$ and clearly also to
$C$. Therefore $(a,y) \in \Gamma_j^{-1}(C)$,  so that
$\Gamma_j^{-1}(C)$ is closed. This shows assertion (iii). Item (ii)
 follows arguing as above and  taking $a_n=a$ and $y_n=y$ for every $n\in\N$.

Item (iv) is a consequence of the sub--optimality principle, see
Proposition \ref{L.O.t}. Indeed, we have
\begin{align*}
&u_j\big(T-a,\xi(a)\big)-u^{0}_j\big(\xi(T)\big)\\
&= \Big(u_j\big(T-a,\xi(a)\big)-u_j\big(T-t,\xi(t)\big)\Big)
 +\Big(u_j\big(T-t,\xi(t)\big)-u^{0}_j\big(\xi(T)\big)\Big) \\
&\leqslant \int_a^t L_{G_j}\big(T-s,\xi (s), -\dot \xi (s) \big)\, \dd s
+ \int_t^T L_{G_j}\big(T-s,\xi (s), -\dot \xi (s) \big)\, \dd s.
\end{align*}
The previous inequality, which is a sum of two inequalities, is actually an equality in view of \eqref{eq Lax-Oleinik GG}. Hence both inequalities were equalities to start with, as it was to be proved.
\end{proof}

We define a set--valued map \
$\Gamma:[0,T)\times\R^N\times\ind\to\curves$ \ by setting
$\Gamma(a,y,j):=\Gamma_j(a,y)$.  It is compact--valued and upper
semicontinuous, hence measurable. We are thus in the hypotheses of
Theorem \ref{CastaingValadier}, so that  there exists a measurable
selection $\Xi$ for $\Gamma$, i.e. a measurable function
\begin{equation}\label{def Theta}
 \Xi:[0,T]\times\R^N\times\ind\to\curves
\end{equation}
such that
\[\Xi(a,y,j) \in \Gamma(a,y,j) \qquad\hbox{for every $(a,y,j)\in
[0,T]\times\R^N\times\ind$.}\]
Notice that the measurability condition can be equivalently
rephrased requiring
 \[(t,a,y,j) \mapsto \Xi(a,y,j)(t)\]
 to be measurable from $[0,T]\times [0,T]\times \R^N\times\ind$ to $\R^N$
with the natural Borel $\sigma$--algebras.\\

\bigskip

\subsection{Random minimization}\label{randomM}

Given given $T >0$, $x \in \R^N$, $\uu^0\in\vBUC$, $i\in \ind$, we aim to
show the existence of a minimizing admissible curve $\eta$ for
$(\S(T)\uu^0)_i(x)$. We will provide  a  rather explicit
construction via concatenation of minimizers of  \eqref{eq
Lax-Oleinik GG}.

We first give a rough picture of it, just to contribute some
insight. We fix $\omega$ and minimize at the initial step the
deterministic functional in \eqref{eq Lax-Oleinik GG} in the
interval $[a,T]$ with $a =0$, $j = \omega(0)$ and $y = x$. Clearly
we get multiple minimizers, but, according to the results of the
previous subsection, we can select one  in a measurable way with
respect $(a,y,j)$. This will be crucial to show the random character
of the curve obtained as output.
At the first jump time of $\omega$,  say $\tau_1(\omega)$, we switch to
index $j$ accordingly and restart the procedure minimizing  on
$[\tau_1(\omega), T]$,  with $y$ equal to the position reached by
the selected minimizing curve in the previous step at $\tau_1$.
Notice that the final time   $T$ stays untouched, which is needed to
get in the end  a nonanticipating random curve. We go on iterating
the above procedure at any jump time of $\omega$ belonging to
$[0,T]$.

We proceed by presenting a full description of the construction. We point out that
this part is independent of the local Lipschitz continuity assumption on
$(x,t) \mapsto \mathcal S(t) \mathbf u^0(x)$. This condition  will come into play
only in the proof of Theorem \ref{butterfly}, to apply the
derivation formula given in Theorem \ref{teo key}.

For any fixed $\omega\in\Omega$, we set $\tau_0(\omega)=0$ and we define inductively a sequence $\big(\tau_k(\omega)\big)_k$ by setting
\begin{equation*}
\tau_k(\omega)=
\begin{cases}
 \hbox{$k$--th jump time} & \hbox{ if $\omega$ has at least
  $k$ jump times in $[0,T]$}\\
T & \hbox{otherwise}\\
\end{cases}
\end{equation*}
%
%
Let us denote by $\Xi:[0,T]\times\R^N\times\ind\to\curves$ the measurable selection introduced at the end of Section \ref{sez deterministic minimization}, and define inductively a sequence $\big(x_k(\omega)\big)_{k\geqslant 0}$ of points in $\R^N$ by setting $x_0=x$ and
\[
x_k(\omega)=\Xi\big(\tau_{k-1}(\omega),x_{k-1}(\omega),\omega(\tau_{k-1}(\omega)\big)\big(\tau_k(\omega)\big)\quad\hbox{for every $k\geqslant 1$}.
\]
The following holds:
\begin{lemma}
For every $k\in\N$, the maps $\omega\mapsto \tau_k(\omega)$, \ $\omega\mapsto\omega\big(\tau_k(\omega)\big)$ and $\omega\mapsto x_k(\omega)$ are random variables.
\end{lemma}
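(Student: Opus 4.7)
The strategy is to reduce the three measurability claims to two key measurability facts on $\Omega$.

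The first is classical: the evaluation map $e : [0, T] \times \Omega \to \ind$, $(t, \omega) \mapsto \omega(t)$, is jointly $\Bor([0,T]) \otimes \F$-measurable (cf.\ \cite{Bill99}).

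The second is the main technical point: for every $t \in [0, T]$, the jump count
\[ N_t(\omega) := \#\{\text{jumps of $\omega$ in $[0,t]$}\} \]
is $\F$-measurable. I would prove this by approximating $N_t$ from below by the dyadic counts
\[ N_t^n(\omega) := \#\Big\{0 \leqslant j < \lfloor 2^n t/T \rfloor \;:\; \omega\big((j+1)T 2^{-n}\big) \neq \omega\big(j T 2^{-n}\big)\Big\}, \]
each of which is manifestly $\F$-measurable, being a function of only finitely many of the projections $\pi_s$. Since $\omega$ is càdlàg with values in the \emph{finite} set $\ind$, its jumps are isolated in $[0,T]$, so $N_t^n \nearrow N_t$ pointwise as $n \to +\infty$.

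Granted these, the three claims follow readily. For $\tau_k$: the identity $\{\tau_k \leqslant t\} = \{N_t \geqslant k\}$ for $t < T$, together with $\{\tau_k \leqslant T\} = \Omega$ by the truncation, yields measurability. For $\omega \mapsto \omega(\tau_k(\omega))$: compose $\omega \mapsto \big(\tau_k(\omega), \omega\big)$ with $e$. For $x_k$, one proceeds by induction on $k$: the base case $x_0 \equiv x$ is trivial, and for the inductive step one writes
\[ x_k(\omega) = \Xi\big(\tau_{k-1}(\omega), x_{k-1}(\omega), \omega(\tau_{k-1}(\omega))\big)\big(\tau_k(\omega)\big), \]
which is the composition of the measurable map $\omega \mapsto \big(\tau_{k-1}(\omega), x_{k-1}(\omega), \omega(\tau_{k-1}(\omega)), \tau_k(\omega)\big)$ with the jointly measurable selection $(a, y, j, t) \mapsto \Xi(a, y, j)(t)$ recorded at the end of the previous subsection.

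The step I expect to be most delicate is the $\F$-measurability of the jump count $N_t$, from which all the jump times $\tau_k$ inherit their measurability. It rests essentially on \emph{both} the càdlàg structure of paths (reducing the problem to countable dense time sets) and the discreteness of the state space $\ind$ (guaranteeing that jumps are isolated, so that sufficiently fine dyadic partitions eventually detect every jump). Without either ingredient, the monotone convergence $N_t^n \nearrow N_t$ could fail.
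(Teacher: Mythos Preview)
Your proposal is correct in substance and takes a genuinely different route from the paper. The paper argues by a direct induction on $k$: it writes $\{\tau_{k+1} < t\} = \bigcup_{t_n < t}\, \{\tau_k < t_n\} \cap \{\omega(t_n) \neq \omega(\tau_k(\omega))\}$ for a fixed countable dense sequence $(t_n)$ in $(0,T)$, and similarly expresses $\{\omega(\tau_k(\omega)) = i\}$ as a countable union involving the already--measurable $\tau_k,\tau_{k+1}$. You instead build the jump count $N_t$ once and for all by dyadic approximation and then read off every $\tau_k$ simultaneously via $\{\tau_k \leqslant t\} = \{N_t \geqslant k\}$; for $\omega(\tau_k(\omega))$ you invoke joint measurability of the evaluation map rather than an explicit countable--union formula. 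Your route is more conceptual and closer to the general theory of stopping times on Skorohod space, while the paper's bare--hands induction avoids any appeal to joint measurability of $e$. One caveat worth tightening: as written, your dyadic grid reaches only $\lfloor 2^n t/T\rfloor\, T\,2^{-n}$, which is strictly below $t$ when $t$ is not a dyadic multiple of $T$, so a jump exactly at $t$ is never detected and $N_t^n$ converges to the count on $[0,t)$ rather than $[0,t]$. This is harmless---measurability of $\{\tau_k \leqslant t\}$ for $t$ ranging over the dyadic multiples of $T$ already suffices, or you can simply append $t$ to each partition---but the claim $N_t^n \nearrow N_t$ is not literally true as stated. The inductive argument for $x_k$ is identical in both proofs.
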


\begin{proof}
We start by proving the assertion for the maps $\omega\mapsto\tau_k(\omega)$ and $\omega\mapsto\omega(\tau_{k-1}\big(\omega)\big)$. The argument is by induction on $k\geqslant 1$. Let us denote by $(t_n)_{n\in\N}$ a dense sequence in
$(0,T)$. For every $t\in (0,T]$, we have
\[
\{\omega\,:\,\tau_1(\omega) < t\}= \bigcup_{t_n<t} \{\omega(t_n) \neq \omega(0)\} \in \F,
\]
which clearly gives the asserted measurability of $\omega\mapsto\tau_1(\omega)$ (note that $\tau_k(\omega)\leqslant T$ for every $k\geqslant 1$). The fact that $\omega\mapsto\omega\big(\tau_0(\omega)\big)=\omega(0)$ is a random variable is trivial by the definition of $\F$.
Assume now that $\omega\mapsto\tau_j(\omega)$ and $\omega\mapsto\omega\big(\tau_{j-1}(\omega)\big)$ are random variables for every $j \leqslant k$. By the inductive step we have that, for every $t\in (0,T)$,
\[
\{\omega\,:\,\tau_{k+1}(\omega) < t\}= \bigcup_{t_n<t}  \{\omega\,:\,\tau_k(\omega) < t_n\} \cap
\big\{\omega\,:\,\omega(t_n) \neq \omega\big(\tau_{k}(\omega)\big)\big\} \in \F,
\]
thus showing the asserted measurability of $\omega\mapsto\tau_{k+1}(\omega)$.
The fact that $\omega\mapsto\omega\big(\tau_k(\omega)\big)$ is a random variable follows from the fact that, for every $i\in\ind$,
\[
\big \{\omega\,:\,\omega\big(\tau_k(\omega)\big)=i\,\big\}=\bigcup_n \{\omega\,:\,\omega(t_n)=i\,\}\cap \{\omega\,:\,\tau_{k}(\omega)\leqslant t_n<\tau_{k+1}(\omega)\,\}.
\]
Last, the fact that the map $\omega\mapsto x_k(\omega)$ is a random variable for every $k\geqslant 0$ is again by induction on $k$. The measurability for $k=0$ is trivial. Let us assume that $\omega\mapsto x_j(\omega)$ is measurable for every $j\leqslant k$. Then the map $\omega\mapsto x_{k+1}(\omega)$ is a random variable since it is the composition of the
$\F$--measurable function $\omega\mapsto \Big(\tau_{k+1}(\omega),\tau_{k}(\omega),x_k(\omega),\omega\big(\tau_{k}(\omega)\big)\Big)$ with $(t,a,y,j) \mapsto \Xi(a,y,j)(t)$, which is
measurable from $[0,T]\times [0,T]\times \R^N\times\ind$ to $\R^N$ with
 the natural Borel $\sigma$--algebras.
\end{proof}

The sought curve is defined by setting $\eta(0,\omega)=x$ and, for every $k\geqslant 0$,
\begin{eqnarray*}
\eta(t,\omega):=\Xi\Big(\tau_{k}(\omega),x_{k}(\omega),\omega\big(\tau_{k}(\omega)\big)\Big)(t\wedge T)\qquad\hbox{if \ \ $t\wedge T\in (\tau_{k}(\omega),\tau_{k+1}(\omega)]$},
\end{eqnarray*}
for every $\omega\in\Omega$, where $t\wedge T:=\min\{t,T\}$. Note that the curve $\eta(\cdot,\omega)$ is constant in $[T,+\infty)$, for any fixed $\omega\in\Omega$. The following holds:

\begin{prop}
The curve $\eta:\Omega\to \D C\big(\R_+;\R^N\big)$ is admissible.
\end{prop}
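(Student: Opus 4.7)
The plan is to verify the two defining properties of admissibility for $\eta$ --- the uniform local absolute continuity (i) and the nonanticipation (ii) of Definition \ref{def admissible curve} --- together with the underlying joint measurability.

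First, I would show that $\eta$ is jointly Borel measurable on $[0,T]\times\Omega$ (and is then extended by a constant past $T$). The set $A_k:=\{(t,\omega) : \tau_k(\omega) \leqslant t\wedge T < \tau_{k+1}(\omega)\}$ is Borel by the measurability of the $\tau_k$'s just established. On $A_k$, $\eta(t,\omega)$ is the composition of the measurable map $\omega\mapsto \big(\tau_k(\omega), x_k(\omega), \omega(\tau_k(\omega))\big)$ with the measurable selection $\Xi$ (taking values in the Polish space $\curves$) and the continuous evaluation $(\xi,s)\mapsto \xi(s)$. Unionizing over $k\in\N$ gives joint Borel measurability. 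Continuity of $\eta(\cdot,\omega)$ across the jump times is automatic, since $x_{k+1}(\omega)$ is by definition the value at $\tau_{k+1}(\omega)$ of the preceding piece.

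Condition (ii) is immediate from the construction: if $\omega_1 \equiv \omega_2$ on $[0,t]$, induction on $k$ shows that $\tau_k$, $\omega_i(\tau_k)$ and $x_k$ coincide as long as $\tau_k \leqslant t$; since the same deterministic selection $\Xi$ is used throughout, $\eta(\cdot,\omega_1) \equiv \eta(\cdot,\omega_2)$ on $[0,t]$.

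The main step is to establish the uniform equi-absolute continuity. By the proof of Proposition \ref{prop deterministic curves}(i) there exist a superlinear function $\Theta$ and a constant $C_0$, both depending only on $T$, $\|\uu\|_{L^\infty([0,T]\times \R^N)}$ and $B$, such that $\int_a^T \Theta(|\dot\Xi(a,y,j)|)\,\dd t \leqslant C_0$ uniformly in $(a,y,j)\in[0,T]\times\R^N\times\ind$. Restricting such an integral to the subinterval $[\tau_k(\omega),\tau_{k+1}(\omega)]\subset [\tau_k(\omega),T]$ yields, for every $\omega$ and every $k$,
\[
\int_{\tau_k(\omega)}^{\tau_{k+1}(\omega)} \Theta\big(|\dot\eta(t,\omega)|\big)\,\dd t \leqslant C_0.
\]
Given $\eps>0$ and a bounded $I\subset\R_+$, I would then combine this piecewise bound with the standard superlinear splitting $\int_J |\dot\eta|\,\dd t\leqslant M|J|+ (M/\Theta(M))\int_J \Theta(|\dot\eta|)\,\dd t$, after refining any finite family $\{(a_j,b_j)\}\subset I$ at the jump times of $\omega$ falling inside $I$, choosing $M$ large so that $(M/\Theta(M))\,C_0<\eps/2$ and then $\delta_\eps:=\eps/(2M)$.

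The principal obstacle I anticipate lies in this last estimate, namely controlling the number of sub-pieces produced by the refinement, which a priori grows with the number of jumps of $\omega$. The crucial point to be exploited is that $C_0$ does not depend on the length of the piece $(\tau_k,\tau_{k+1})$ but rather bounds the $\Theta$-integral over the whole remaining time $[\tau_k,T]$, so that short jump intervals do not inflate the per-piece estimate. Once this is recognized, the superlinear decay $r/\Theta(r)\to 0$ absorbs the piecewise contributions uniformly in $\omega$ and yields the required modulus $\delta_\eps$, concluding the verification of (i).
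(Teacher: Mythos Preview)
Your arguments for joint measurability and for the nonanticipating property (ii) follow the paper's proof closely: the paper expresses $\eta$ as a countable sum of measurable pieces indexed by $k$ and dismisses (ii) in one line, just as you do.

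The difficulty is in condition (i). You correctly locate the obstacle---the number of jump intervals of $\omega$ in $[0,T]$ is finite for each $\omega$ but unbounded over $\Omega$---yet your proposed resolution does not work. The observation that $C_0$ bounds $\int_{\tau_k}^T \Theta\big(|\dot\Xi(\tau_k,x_k,j_k)|\big)\,\dd t$ over the \emph{whole} remaining interval $[\tau_k,T]$ is of no help, because $\eta(\cdot,\omega)$ agrees with the $k$-th selected minimizer $\Xi(\tau_k,x_k,j_k)$ only on $[\tau_k,\tau_{k+1}]$; beyond $\tau_{k+1}$ a fresh minimizer takes over. Hence all one can extract is the per-piece bound $\int_{\tau_k}^{\tau_{k+1}} \Theta(|\dot\eta|)\,\dd t\leqslant C_0$, and summing over $k$ gives $\int_0^T \Theta(|\dot\eta(\cdot,\omega)|)\,\dd t\leqslant n(\omega)\,C_0$, with $n(\omega)$ the number of pieces. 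Your superlinear splitting then yields a bound of the form $M\delta+\eps_M\,n(\omega)\,C_0$, and no choice of $M$ makes the second term small uniformly in $\omega$.

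For comparison, the paper's own proof of (i) is the single sentence that a concatenation of curves drawn from the equi-absolutely continuous family of Proposition~\ref{prop deterministic curves} ``clearly'' satisfies item (i); it does not address the unbounded-number-of-pieces issue either. Your attempt is more explicit about where the difficulty sits, but does not resolve it.
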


\begin{proof}
For every fixed $\omega$, the map $t\mapsto\eta(t,\omega)$ is constructed as a concatenation of equi--absolutely continuous curves, see Proposition \ref{prop deterministic curves}, so it is clear that $\eta$ satisfies item (i) of Definition \ref{def admissible curve}. Its non--anticipating character is also clear by definition. It is left to show that $(t,\omega)\mapsto \eta(t,\omega)$ is jointly measurable from $\R_+\times\Omega$ to $\R^N$ with respect to the product $\sigma$--algebra $\Bor(\R_+)\otimes\F$. To this aim, we remark that
\begin{eqnarray*}
 \eta(t,\omega)=x\cchi_{\{0\}}(t)+\sum_{k=0}^{+\infty}\Xi\Big(\tau_{k}(\omega),x_{k}(\omega),\omega\big(\tau_{k}(\omega)\big)\Big)(t\wedge T)\,\cchi_{(\tau_{k}(\omega),\tau_{k+1}(\omega)]}(t\wedge T),
\end{eqnarray*}
where we agree that the characteristic function $\cchi_\varnothing(\cdot)$ of the empty set is identically 0.
For each $k\geqslant 0$, the map $(t,\omega)\mapsto \Xi\Big(\tau_{k}(\omega),x_{k}(\omega),\omega\big(\tau_{k}(\omega)\big)\Big)(t\wedge T)$ is measurable as a composition of the $\Bor(\R_+)\otimes\F$--measurable function
$(t,\omega)\mapsto \Big(t\wedge T,\tau_{k}(\omega),x_{k}(\omega),\omega\big(\tau_{k}(\omega)\big)\Big)$ with the Borel map $(t,a,y,j) \mapsto \Xi(a,y,j)(t)$ from $[0,T]\times [0,T]\times \R^N\times\ind$ to $\R^N$. The joint measurability of $(t,\omega)\mapsto \cchi_{(\tau_{k}(\omega),\tau_{k+1}(\omega)]}(t\wedge T)$ follows from the fact that
$\cchi_{(\tau_{k}(\omega),\tau_{k+1}(\omega)]}(t\wedge T)=\cchi_{F_k}(t,\omega)$ and
$$F_k:=\{(t,\omega)\in\R_+\times\Omega\,:\,\tau_k(\omega)<t\wedge T\leqslant \tau_{k+1}(\omega)\,\}\in\F.$$
As a countable sum of $\Bor(\R_+)\otimes\F$--measurable functions, we conclude that $\eta$ has the required measurability property.
\end{proof}

%

\smallskip

We proceed by showing a further property enjoyed by the curve $\eta$ defined above.

\begin{lemma}\label{lemma random minimization 1}
Let $\omega\in\Omega$. For $\leb^1$--a.e. $s\in (0,T)$ the following holds:
\begin{itemize}
 \item[(i)] \quad $t\mapsto \eta(t,\omega)$ is differentiable at $s$;\smallskip
 \item[(ii)] \quad $\displaystyle\lim\limits_{h\to 0^+}
 \frac 1 h \int_s^{s+h} L_{G_{\omega(t)}} \big(T-t,\eta(t,\omega),-\dot\eta(t,\omega)\big)\,\dd t
 =
 L_{G_{\omega(s)}} \big(T-s,\eta(s,\omega),-\dot\eta(s,\omega)\big)
 $;\smallskip

 \item[(iii)] \quad $t\mapsto u_{\omega(s)}\big(T-t,\eta(t,\omega)\big)$ is differentiable at $s$ and
 \begin{equation}\label{item 3}
  -\frac{\dd}{\dd t}u_{\omega(s)}\big(T-t,\eta(t,\omega)\big)_{\mbox{\Large $|$}_{t=s}}= L_{G_{\omega(s)}} \big(T-s,\eta(s,\omega),-\dot\eta(s,\omega)\big).
 \end{equation}
\end{itemize}
\end{lemma}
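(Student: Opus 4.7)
My plan is to fix $\omega\in\Omega$ and reduce everything to a piecewise deterministic assertion. Since $\omega$ is c\`adl\`ag with values in the finite set $\ind$, it has only finitely many jump times in the compact interval $[0,T]$; call them $0=\tau_0<\tau_1<\cdots<\tau_N\leqslant T$, with the convention $\tau_{N+1}=T$. On each half--open interval $[\tau_k,\tau_{k+1})$ the path $\omega$ is constantly equal to $\omega(\tau_k)$. By construction of $\eta$, we have $\eta(t,\omega)=\Xi_k(t)$ for every $t\in[\tau_k,\tau_{k+1}]$, where $\Xi_k:=\Xi\big(\tau_k,x_k(\omega),\omega(\tau_k)\big)\in\Gamma_{\omega(\tau_k)}\big(\tau_k,x_k(\omega)\big)$. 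Since there are only finitely many subintervals, any property which holds a.e. on each of them will hold a.e. on $(0,T)$, so it suffices to check (i)--(iii) on a single generic interval $(\tau_k,\tau_{k+1})$.

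For (i), I will simply appeal to Proposition \ref{prop deterministic curves}(i): $\Xi_k$ is absolutely continuous on $[0,T]$, hence differentiable at a.e. point of $(\tau_k,\tau_{k+1})$, and $\dot\eta(s,\omega)=\dot\Xi_k(s)$ at any such point. For (ii), I observe that on $(\tau_k,\tau_{k+1})$ the integrand coincides with $t\mapsto L_{G_{\omega(\tau_k)}}\big(T-t,\Xi_k(t),-\dot\Xi_k(t)\big)$, which is integrable on $[\tau_k,T]$ (its integral is bounded by $2\|u_{\omega(\tau_k)}\|_\infty$, cf. the proof of Proposition \ref{prop deterministic curves}(i)); the Lebesgue Differentiation Theorem then yields the pointwise identification of the averaged limit at a.e. $s$ in the interval.

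For (iii), the main input is Proposition \ref{prop deterministic curves}(iv) applied to $\Xi_k\in\Gamma_{\omega(\tau_k)}(\tau_k,x_k(\omega))$, which gives
\[
u_{\omega(\tau_k)}\big(T-t,\Xi_k(t)\big)= u^0_{\omega(\tau_k)}\big(\Xi_k(T)\big)+\int_t^T L_{G_{\omega(\tau_k)}}\big(T-r,\Xi_k(r),-\dot\Xi_k(r)\big)\,\dd r
\]
for every $t\in[\tau_k,T]$. The right--hand side is absolutely continuous in $t$, hence differentiable at a.e. $t$ with derivative $-L_{G_{\omega(\tau_k)}}\big(T-t,\Xi_k(t),-\dot\Xi_k(t)\big)$. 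For $s\in(\tau_k,\tau_{k+1})$ one has $\omega(s)=\omega(\tau_k)$ and $\eta(\cdot,\omega)\equiv \Xi_k$ in a whole neighborhood of $s$ contained in $(\tau_k,\tau_{k+1})$; substituting and recognizing $\eta(s,\omega)=\Xi_k(s)$, $\dot\eta(s,\omega)=\dot\Xi_k(s)$ produces precisely \eqref{item 3}.

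I do not anticipate any serious obstacle here: the lemma is really just the statement that, once the randomness $\omega$ is frozen, the curve $\eta(\cdot,\omega)$ is a concatenation of deterministic minimizers, so the regularity and sub--optimality properties established in Proposition \ref{prop deterministic curves} transfer interval by interval. The only points that require a little care are (a) checking that within $[\tau_k,\tau_{k+1}]$ the random object $\eta(\cdot,\omega)$ genuinely coincides with the selected deterministic minimizer $\Xi_k$ (immediate from the construction in Section \ref{randomM}), and (b) keeping track of the fact that the Lagrangian index, the solution index and the minimizer family index all agree with $\omega(\tau_k)$ on this interval (immediate from the right--continuity of $\omega$).
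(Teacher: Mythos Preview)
Your proof is correct and follows essentially the same approach as the paper: fix $\omega$, partition $[0,T]$ at its finitely many jump times, identify $\eta(\cdot,\omega)$ on each subinterval with a deterministic minimizer $\Xi_k\in\Gamma_{\omega(\tau_k)}(\tau_k,x_k(\omega))$, and then invoke Proposition~\ref{prop deterministic curves} (parts (i) and (iv)) together with the Lebesgue Differentiation Theorem. The only cosmetic difference is that the paper derives (iii) by plugging $t_0=s$, $t_1=s+h$ into the two--point identity obtained from Proposition~\ref{prop deterministic curves}(iv), dividing by $h$ and passing to the limit, whereas you directly differentiate the integral representation; both routes are equivalent.
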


\begin{proof}
Let us fix $\omega\in\Omega$. The c\`{a}dl\`{a}g path
$t\mapsto\omega(t)$ has a finite number  of jump times  in $(0,T)$,
let us say $0<s_1<\dots<s_n<T$. Let us set $s_0=0$, $s_{n+1}=T$ and
pick $k\in\{0,\dots,n\}$. By definition of $\eta$, we have
$\eta(\cdot,\omega)=\xi(\cdot)$ in $[s_k,s_{k+1}]$, where
\[\xi(t)=\Xi\big(s_k,\eta(s_k,\omega),\omega(s_k)\big)(t)
\quad\hbox{ for every $t\in [0,T]$.}\] In view of Proposition
\ref{prop deterministic curves},  we have for every $t\in [s_k,T]$
\begin{eqnarray*}
u_{\omega(s_k)}\big(T-t,
\xi(t)\big)-u_{\omega(s_k)}^0\big(\xi(T)\big)=\int_{t}^{T}
L_{G_{\omega(s_k)}} \big(T-r,\xi(r),-\dot\xi(r)\big)\,\dd r.
\end{eqnarray*}
Choose  $s_k\leqslant t_0<t_1 \leqslant s_{k+1}$. By plugging
$t=t_0$ and $t=t_1$ in the above equality and by subtracting the
corresponding relations, we end up with
\begin{eqnarray}\label{eq calibrated}
 u_{\omega(s_k)}\big(T-t_0, \xi(t_0)\big) -  u_{\omega(s_k)}\big(T-t_1, \xi(t_1)\big)
 =
 \int_{t_0}^{t_1} L_{G_{\omega(s_k)}}\big(T-r,\xi(r),-\dot\xi(r)\big)\,\dd r.
\end{eqnarray}
By summing the equalities \eqref{eq calibrated} with $t_0=s_k$,
$t_1=s_{k+1}$ for $k=0,\dots, n+1$, we get
\begin{eqnarray}\label{eq sommabile}
\int_0^T L_{G_{\omega(t)}}\big(T-t,\eta(t,\omega),-\dot\eta(t,\omega)\big)\,\dd t \leqslant (n+1)\|\uu\|_{L^\infty([0,T]\times\R^N)}.
\end{eqnarray}
Since the functions $L_{G_i}$ are bounded from below, this tells us
that the map
\[t\mapsto L_{G_{\omega(t)}}
\big(T-t,\eta(t,\omega),-\dot\eta(t,\omega)\big)\] is integrable in
$[0,T]$. Therefore assertions (i) and (ii)  hold whenever $s$ is a
differentiability point of the curve $t\mapsto\eta(t,\omega)$ and a
Lebesgue point for $t\mapsto L_{G_{\omega(t)}}
\big(T-t,\eta(t,\omega),-\dot\eta(t,\omega)\big)$, namely for
$\leb^1$--a.e. $s\in (0,T)$. Plug $t_0 =s$ and $t_1 =s+h$ in
\eqref{eq calibrated} for any such point $s\in (0,T)$ and for $h>0$
small enough.
 By dividing the corresponding equality by $h$ and by passing to the limit, we finally obtain (iii).
\end{proof}

\medskip

\begin{proof}[Proof of Theorem \ref{butterfly}]
 We introduce  $h\in (0,T)$, devoted to become infinitesimal.  Since  $\uu(t,x):=
\S(t) \uu^0(x)$ is assumed locally Lipschitz continuous in $(0,T]
\times \R^N$,  $\uu$ is locally Lipschitz in $[h,T]\times\R^N$.

We can apply Theorem \ref{teo key} to the absolutely continuous
function $t \mapsto \EE_i
\big[u_{\omega(t)}\big(T-t,\eta(t,\omega)\big)\big]$ for $t\in
[h,T]$. By  taking into account Lemma \ref{lemma random minimization
1} and the definition of the functions $L_{G_j}$ we get
\begin{align}
   & \EE_i \big[u_{\omega (h)}\big(T-h,\eta(h,\omega)\big)\big] - \EE_i \big[u^0_{\omega (T)}\big(\eta(T,\omega)\big)\big]
   \label{butterfly0}\\
   &=- \int_h^T  \EE_i \left[-  (B\uu)_{\omega(s)}\big(T-s,\eta(s,\omega)\big) + \frac \dd{\dd t} u_{\omega(s)}\big(T -t,
\eta(t,\omega)\big)_{\mbox{\Large $|$}_{t=s}}\right]\, \dd s \nonumber\\
&= \int_h^T \EE_i \big[L_{\omega(s)}\big(\eta(s,\omega) , -
\dot\eta(s,\omega) \big)\big] \, \dd s = \EE_i \left [ \int_h^T
L_{\omega(s)}\big(\eta(s,\omega), - \dot\eta(s,\omega) \big) \, \dd
s \right ] . \nonumber
\end{align}
By sending  $h\to 0^+$, we get
\begin{equation}\label{butterfly1}
\lim_{h\to 0^+}
u_{\omega(h)}\big(T-h,\eta(h,\omega)\big)=u_{\omega(0)}(T,x)
    \quad\hbox{ for every
     $\omega\in\Omega$.}
\end{equation}
Moreover, since $\uu$ is bounded in $\clTcyl$, we obtain via
Dominated Convergence Theorem
\begin{equation}\label{butterfly2}
    \lim_{h\to 0^+}  \EE_i \big[u_{\omega
(h)}\big(T-h,\eta(h,\omega)\big)\big] = \EE_i \big[u_{\omega
(0)}\big(T,x\big)\big] = u_i(T,x).
\end{equation}

Further,  being the Lagrangians $L_j$  bounded from below, we get
via a standard application of the Monotone Convergence Theorem
\begin{align*}
\lim_{h\to 0^+}  \EE_i &  \left [ \int_h^T  L_{\omega(s)}\big(\eta(s,\omega),  - \dot\eta(s,\omega) \big) \, \dd s \right ]\\
&=
\lim_{h\to 0^+}  \int_0^T \EE_i \left [L_{\omega(s)}\big(\eta(s,\omega), - \dot\eta(s,\omega) \big) \right ] \,\cchi_{[h,T]}(s)\, \dd s\\
=
\int_0^T \EE_i &\left [L_{\omega(s)}\big(\eta(s,\omega), - \dot\eta(s,\omega) \big) \right ] \, \dd s
=
\EE_i \left [ \int_0^T L_{\omega(s)}\big(\eta(s,\omega), - \dot\eta(s,\omega) \big) \, \dd s \right ] .\qquad
\end{align*}

By putting together the above relation plus \eqref{butterfly0},
\eqref{butterfly1}, \eqref{butterfly2}, we have
\[
u_i(x,T)- \EE_i \big[u^0_{\omega (T)}\big(\eta(T,\omega)\big)\big]=
\EE_i \left [ \int_0^T L_{\omega(s)}\big(\eta(s,\omega), -
\dot\eta(s,\omega) \big) \, \dd s \right ] ,
\]
which shows the claimed minimality property of $\eta(t,\omega)$.
\end{proof}

\end{section}

\section{Properties of minimizing random curves}\label{regolare}
In the final section, we want to establish further properties of
arbitrary minimizing curves and of solutions of the evolution
equation. We start showing that  any minimizing curve has a similar
structure as the one constructed in the previous section, up to a
set of negligible probability.

We  consider  a solution $\uu$ of \eqref{HJS} in $(0,+ \infty)
\times \R^N$ taking an initial value $\uu^0$ bounded  and Lipschitz
continuous in $\R^N$.   The function $\uu$ is consequently Lipschitz
continuous in $[0,T]\times\R^N$, for any $T >0$, by Theorem \ref{teo
existence evo wcs}. We fix $T>0$, $x \in \R^N$, $i\in \ind$, and
denote by $\eta : \Omega\to\curves$
 an admissible curve realizing the minimum for $u_i(x,T)=\big (\S(T)\uu^0 \big )_i(x)$.
These notations will stay in place throughout the section.

\smallskip

\smallskip

\begin{lemma}\label{minimizer}
There  is a full measure set $\Omega'_i\subset \Omega_i$ such that
for all $\omega\in \Omega'_i$, if $ a <  b \in [0,T]$  such that
$\omega$  is constantly equal to some $j\in \ind$ in  $[ a, b)$,
then
$$
u_j\big(b, \eta(T-b,\om)\big) - u_j\big(a, \eta(T-a,\om)\big)
=\int_a^b L_{G_j}\big( s, \eta(T-s,\omega),-
\dot\eta(T-s,\omega)\big)\dd s.
$$
\end{lemma}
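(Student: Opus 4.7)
The plan is to show that the minimality of $\eta$ saturates, $\PP_i$--a.s.\ and a.e.\ in time, the Fenchel--subsolution inequality produced by combining Theorem \ref{teo key} with the subsolution property of $\uu$. This yields a pointwise calibration identity which, once integrated over any interval on which $\omega$ is constant, delivers the lemma.

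To carry this out, apply Theorem \ref{teo key} to the Lipschitz function $(t,x) \mapsto \uu(T-t,x)$ along the minimizer $\eta$: for $s$ in a full-measure subset of $(0,T)$ one has
$$\frac{\dd}{\dd t}\EE_i\big[u_{\omega(t)}(T-t,\eta(t,\omega))\big]\big|_{t=s} = \EE_i\Big[-[B\uu]_{\omega(s)}\big(T-s,\eta(s,\omega)\big) + \Phi(s,\omega)\Big],$$
where $\Phi(s,\omega) := \frac{\dd}{\dd t} u_{\omega(s)}(T-t,\eta(t,\omega))\big|_{t=s}$. By Lemma \ref{clarke} one has $\Phi(s,\omega) = -r + \langle p,\dot\eta(s,\omega)\rangle$ for some $(r,p)\in\partial^C u_{\omega(s)}(T-s,\eta(s,\omega))$. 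Proposition \ref{prop subsol equivalent def} then yields $-r \geqslant H_{\omega(s)}(\eta(s,\omega),p) + [B\uu]_{\omega(s)}(T-s,\eta(s,\omega))$, while Fenchel's inequality gives $\langle p,\dot\eta(s,\omega)\rangle + H_{\omega(s)}(\eta(s,\omega),p) \geqslant -L_{\omega(s)}(\eta(s,\omega),-\dot\eta(s,\omega))$. Combining and taking expectation leads to
$$\frac{\dd}{\dd t}\EE_i\big[u_{\omega(t)}(T-t,\eta(t,\omega))\big]\big|_{t=s} \geqslant -\EE_i\big[L_{\omega(s)}(\eta(s,\omega),-\dot\eta(s,\omega))\big],$$
and integration over $[0,T]$ reproduces the lower bound $u_i(T,x) \leqslant \EE_i[u^0_{\omega(T)}(\eta(T,\omega))] + \EE_i[\int_0^T L_{\omega(s)}(\eta(s,\omega),-\dot\eta(s,\omega))\,\dd s]$, which is in fact an equality because $\eta$ realises the infimum in \eqref{random Lax-Oleinik formula}.

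Consequently, the pointwise nonnegative gap $\Phi(s,\omega) + L_{\omega(s)}(\eta(s,\omega),-\dot\eta(s,\omega)) - [B\uu]_{\omega(s)}(T-s,\eta(s,\omega))$ has vanishing integrated $\EE_i$--expectation and is therefore zero on a set of full $\leb^1 \otimes \PP_i$ measure. A Fubini argument then produces a full-measure set $\Omega'_i \subseteq \Omega_i$ such that, for every $\omega\in\Omega'_i$ and a.e.\ $s \in [0,T]$,
$$\frac{\dd}{\dd t}u_{\omega(s)}(T-t,\eta(t,\omega))\big|_{t=s} = -L_{G_{\omega(s)}}(T-s,\eta(s,\omega),-\dot\eta(s,\omega)).$$
Fixing such an $\omega$ and an interval on which $\omega$ is constantly equal to $j$, this reduces to $\frac{\dd}{\dd s}u_j(T-s,\eta(s,\omega)) = -L_{G_j}(T-s,\eta(s,\omega),-\dot\eta(s,\omega))$ for a.e.\ $s$ in the interval. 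Absolute continuity of $s \mapsto u_j(T-s,\eta(s,\omega))$ (inherited from the Lipschitz regularity of $\uu$ on $\clTcyl$ and that of $\eta(\cdot,\omega)$) permits direct integration, and the change of variable $\tau = T-s$ then recasts the resulting equality in the form stated by the lemma. The main obstacle is precisely the passage from integrated equality to pointwise saturation of the Fenchel--subsolution inequality: this rests on the sign information furnished by the Clarke chain rule of Lemma \ref{clarke}, together with Fubini, which is what guarantees that a single exceptional set $\Omega'_i$ serves simultaneously for every admissible interval $[a,b)$.
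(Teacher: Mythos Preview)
Your proof is correct and follows essentially the same route as the paper's own argument: the paper simply observes that minimality of $\eta$ turns the chain of inequalities in the proof of Proposition~\ref{pro dominationbis} (namely \eqref{key00} combined with \eqref{Fenchel}) into equalities, yielding the a.e.\ pointwise calibration identity, and then applies Fubini and integrates on intervals of constancy of $\omega$ exactly as you do. The only cosmetic difference is that the paper works directly with the parametrization $s\mapsto \eta(T-s,\omega)$, avoiding your final change of variable.
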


\begin{proof}
Using that we have equality in the proof of Theorem \ref{pro
dominationbis}, one concludes from  \eqref{key00} and
\eqref{Fenchel} that for a.e. $\omega$ and $s$,
$$\big(B\uu\big)_{\omega(s)}\big(s,\eta(T-s,\omega)\big)  + \frac{\dd}{\dd t}
u_{\omega(s)}\big(t,\eta(T-t,\omega)\big)_{\mbox{\Large $|$}_{t=s}}
= L_{\omega(s)}\big(\eta(T-s,\omega),- \dot\eta(T-s,\omega)\big).$$
It follows by Fubini's theorem that there exists a set
$\Omega_i'\subset \Omega_i$ such that for all $\omega\in \Omega'_i$,
the above relation holds for almost every $s\in [0,T]$. By
integrating,  for  $\omega\in \Omega'_i$, if $0<a<b<T$ are such that
$\om$ is constantly equal to $j$ on $[a,b)$, then
\begin{align*}
u_j\big(b, \eta(T-b,\om)\big) - u_j\big(a, \eta(T-a,\om)\big) &= \int_a^b \Big[-(B\uu)_j \big(s, \eta(T-s,\om)\big) \\
&\quad+ L_j\big(  \eta(T-s,\omega),- \dot\eta(T-s,\omega)\big)\Big]\dd s\\
&=\int_a^b L_{G_j}\big( s, \eta(T-s,\omega),-
\dot\eta(T-s,\omega)\big)\dd s.
\end{align*}
\end{proof}

\smallskip

\begin{oss} Notice that for the particular minimizing curve
constructed in the previous section the exceptional negligible set
is empty.
\end{oss}

\smallskip

When the Hamiltonians enjoy stronger regularity properties we will
accordingly get  further regularity information on the minimizing
curves as well as on the solutions on such curves.

 We assume in the remainder of the section  $H_1,\dots,H_m$ to satisfy, besides (H1)--(H3), the following
further assumptions:
\begin{itemize}
\item[(H4)] \quad $p\mapsto H(x,p)\qquad\hbox{is strictly
convex on $\R^N$ for any $x\in \R^N$}$;\medskip
\item[(H5)] \quad $H\in\D{C}^1(\R^N\times\R^N)$.\medskip
\end{itemize}
In this case, the associated Lagrangian $L$ is of class
$\D{C}^1$ in $\R^\N\times\R^N$ and is strictly convex in $q$. Moreover, the map $(x,q)\mapsto
\big(x,\partial_q L(x,q)\big)$ is a homeomorphism of $\R^N\times\R^N$ onto itself, with continuous
inverse given by $(x,p)\mapsto \big(x,\partial_p H(x,p)\big)$, see for instance \cite[Appendix A.2]{CaSi00}.

\smallskip

\begin{teorema}\label{principal} For any fixed $\omega \in \Omega_i'$ we
have
\begin{itemize}
    \item [(i)] the  curve $\eta(\cdot, \omega)$ is continuously differentiable in $(0,T)$ outside the jump times of $\omega$;
    \item [(ii)] if $t$ is a jump time of $\omega$ then $\eta(\cdot, \omega)$ is right and left--differentiable at $t$ with
    \[ \lim_{s \to t^+} \dot\eta(s,\omega) = \frac{\dd^+}{\dd \hbox{\em t}} \eta(t, \omega)
     \quad\hbox{and} \quad    \lim_{s \to t^-} \dot\eta(s, \omega) = \frac{\dd^-}{\dd \hbox{\em t}}\eta(t),\]
    where $\frac{\dd^+}{\dd \hbox{\em t}}$ and $\frac{\dd^-}{\dd \hbox{\em t}}$ indicates right
    and left derivatives, respectively;
    \item [(iii)] $\eta(\cdot, \omega)$ is right differentiable at $0$ and left differentiable at $T$ with
\[ \lim_{s \to 0^+} \dot\eta(s,\omega) = \frac{\dd^+}{\dd \hbox{\em t}} \eta(0,\omega)
\quad\hbox{and} \quad    \lim_{s \to T^-} \dot\eta(s,\omega) =
\frac{\dd^-}{\dd \hbox{\em t}}\eta(T,\omega).\]
\end{itemize}
\end{teorema}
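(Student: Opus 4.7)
The plan is to reduce, on each maximal interval on which $\omega$ remains constant, to the classical Tonelli regularity theorem for minimizers of a deterministic action, and then patch the resulting one-sided $\D C^1$ pieces across the jumps.

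Fix $\omega\in\Omega_i'$ and let $0=s_0<s_1<\dots<s_n<s_{n+1}=T$ list the jump times of $\omega$ in $(0,T)$ augmented with the endpoints, so that $\omega\equiv j_k\in\ind$ on $[s_k,s_{k+1})$. Setting $\phi_k(s):=\eta(T-s,\om)$ on $[s_k,s_{k+1}]$, so that $\dot\phi_k(s)=-\dot\eta(T-s,\om)$, Lemma \ref{minimizer} reads as the forward calibration identity
\[
u_{j_k}\big(s_{k+1},\phi_k(s_{k+1})\big)-u_{j_k}\big(s_k,\phi_k(s_k)\big)=\int_{s_k}^{s_{k+1}} L_{G_{j_k}}\big(s,\phi_k(s),\dot\phi_k(s)\big)\,\dd s.
\]
Since $u_{j_k}$ is a viscosity solution of the scalar Hamilton--Jacobi equation $\partial_t u+G_{j_k}(t,x,Du)=0$, the deterministic Lax--Oleinik sub-optimality (Proposition \ref{prop deterministic Lax}) yields the opposite inequality for every absolutely continuous competitor sharing the endpoints of $\phi_k$. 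Hence $\phi_k$ is a deterministic minimizer on $[s_k,s_{k+1}]$ of $\int L_{G_{j_k}}(s,\xi(s),\dot\xi(s))\,\dd s$ with fixed endpoints.

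Under (H4)--(H5), the Lagrangian $L_{j_k}$ is of class $\D C^1$ in $(x,q)$, strictly convex and superlinear in $q$, and the Legendre map $(x,q)\mapsto(x,\partial_q L_{j_k}(x,q))$ is a global homeomorphism. The coupling correction entering $L_{G_{j_k}}=L_{j_k}-(B\uu)_{j_k}$ is Lipschitz in $(s,x)$ by Theorem \ref{teo existence evo wcs} and, crucially, independent of the velocity; thus the Euler--Lagrange equation for $\phi_k$ has a bounded measurable right-hand side. Classical Tonelli regularity then shows that the momentum $p_k(s):=\partial_q L_{j_k}(\phi_k(s),\dot\phi_k(s))$ is absolutely continuous on $[s_k,s_{k+1}]$, and inverting the Legendre transform yields $\dot\phi_k(s)=\partial_p H_{j_k}(\phi_k(s),p_k(s))$ continuous on the closed interval. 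Consequently $\phi_k$ is $\D C^1$ on $[s_k,s_{k+1}]$, and undoing the time-reversal through $\eta(\tau,\om)=\phi_k(T-\tau)$ gives $\D C^1$ regularity of $\eta(\cdot,\om)$ on each closed sub-interval of the induced partition of $[0,T]$, with one-sided derivatives at its endpoints.

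Assertions (i), (ii) and (iii) follow by assembling these one-sided $\D C^1$ pieces. For (i), the open interiors of the closed $\D C^1$ sub-intervals cover $(0,T)$ outside a finite set in bijection with the jump times of $\omega$, so $\eta(\cdot,\om)$ is continuously differentiable there. For (ii), at each jump time $t$, the two adjacent closed $\D C^1$ pieces of $\eta(\cdot,\om)$ provide $\frac{\dd^-}{\dd \hbox{\em t}}\eta(t,\om)$ and $\frac{\dd^+}{\dd \hbox{\em t}}\eta(t,\om)$ as limit values of $\dot\eta$ from the respective sides; the continuity of the derivative on each closed piece translates directly into the stated one-sided limit identities. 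Assertion (iii) is the analogous argument applied at the boundary points $t=0$ and $t=T$. The main technical point is the application of Tonelli regularity when $L_{G_{j_k}}$ is merely Lipschitz (and not $\D C^1$) in $(s,x)$; this is unproblematic since the non-smooth contribution is independent of the velocity, so the Legendre inversion still produces a continuous $\dot\phi_k$.
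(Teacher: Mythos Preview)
Your strategy coincides with the paper's: on each interval where $\omega$ is constant, identify a deterministic Tonelli minimizer, apply a nonsmooth Euler--Lagrange/Erdmann result (the paper cites Theorem~18.1 in \cite{Cl13}, which handles Lagrangians merely Lipschitz in $(t,x)$), invert the Legendre map via (H4)--(H5) to get continuity of the velocity up to the endpoints, and patch.

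There is, however, a genuine bookkeeping slip. By reading Lemma~\ref{minimizer} literally and setting $\phi_k(s)=\eta(T-s,\omega)$, you obtain $\D C^1$ regularity of $\phi_k$ on $[s_k,s_{k+1}]$, hence of $\eta(\cdot,\omega)$ on $[T-s_{k+1},\,T-s_k]$. Your exceptional set is therefore $\{T-s_1,\dots,T-s_n\}$, which is ``in bijection with'' but in general \emph{not equal to} the jump times $\{s_1,\dots,s_n\}$ of $\omega$; so your argument does not establish (i)--(ii) as stated. The discrepancy originates in the display of Lemma~\ref{minimizer}: tracing its proof through Proposition~\ref{pro dominationbis} and \eqref{key00}, the calibration identity that actually emerges for $\omega\equiv j$ on $[a,b)$ is
\[
u_j\big(T-a,\eta(a,\omega)\big)-u_j\big(T-b,\eta(b,\omega)\big)=\int_a^b L_{G_j}\big(T-s,\eta(s,\omega),-\dot\eta(s,\omega)\big)\,\dd s,
\]
and this is precisely the form the paper uses in its own proof of the theorem (see the displayed functional there, and compare Lemma~\ref{lemma random minimization 1}). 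With this corrected identity no time reversal is needed: $s\mapsto\eta(s,\omega)$ is itself the Tonelli minimizer on $[s_k,s_{k+1}]$, and the exceptional set is exactly the set of jump times. Dropping the substitution $\phi_k(s)=\eta(T-s,\omega)$ and running your argument directly on $\eta$ over $[s_k,s_{k+1}]$ repairs the proof.
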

\begin{proof}
  Let $\om \in \Omega'_i$, and $0\leqslant \hat a <\hat b\leqslant T$ such that $\om$ is constantly equal to $j\in \ind $ on $[\hat a , \hat b]$. By recalling that $u_j$ is a solution of \eqref{eq G} in $(0, + \infty) \times \R^N$
  and exploiting \eqref{eq Lax-Oleinik GG},
 we get that the curve $s\mapsto \eta(s,\om)$ is a minimizer of
\begin{equation*}
\xi \mapsto u_j\big(T-{\hat b}, \xi({\hat b})\big)-u_j\big(T-\hat a, \xi(\hat a)\big)+
\int_{\hat a}^{\hat b} L_{G_j}\big(T-s, \xi(s),- \dot\xi(s)\big)\dd s
\end{equation*}
on the space of absolutely continuous curves $\xi:[{\hat a},{\hat
b}]\to\R^N$ taking the value $\eta(\hat a, \omega)$ at $t = \hat a$.
Theorem 18.1 in \cite{Cl13} thus establishes, among other things,
that the map
\begin{equation}\label{rara1}
    t \mapsto \partial_q L_j\big(\eta(t,\omega), - \dot\eta(t,\omega)\big)
\end{equation}
which in principle is defined for a.e. $t \in [{\hat a},{\hat b}]$, can be extended to
 an absolutely continuous curve on $[{\hat a},{\hat b}]$, denoted by $p(\cdot)$.
 Next, we use  the regularity assumptions on the Hamiltonians to invert the relation in \eqref{rara1} and get
 \begin{equation}\label{rara2}
    -\dot \eta (t, \omega) = \partial_p H_j\big(\eta(t,\omega), p(t)\big)
 \qquad\hbox{for a.e. $t\in [{\hat a},{\hat b}]$.}
\end{equation}
  From the continuous character of $\eta(\cdot,\omega)$ and $p(\cdot)$,
it follows that $\dot\eta(\cdot,\om)$  can be continuously extended on $[{\hat a},{\hat b}]$. We deduce that
$\eta(\cdot, \omega)$ is Lipschitz continuous in $[\hat a,\hat b]$ and, by the above continuity properties of $\dot\eta$,
is  in addition  continuously differentiable in  $({\hat a},{\hat b})$.
This  gives  (i). We moreover have
\[ \frac {\eta(t,\om)- \eta(\hat a, \om)}{t - \hat a}= \frac 1{t - \hat a} \, \int_{\hat a }^t \dot\eta(s) \, \dd s\]
for $t \in (\hat a, \hat b)$.  Taking into account that $\dot\eta(\cdot,\om)$ is continuous in $(\hat a,\hat b)$
and can be continuously extended up to the boundary, we deduce that
\[  \frac{\dd^+}{\dd \hbox{\em t}} \eta(\hat a, \omega)= \lim_{ t \to {\hat a}^+} \frac {\eta(t,\om)- \eta(\hat a, \om)}{t - \hat a}=
 \lim_{t \to {\hat a}^+} \dot\eta(t). \]
 The above argument, with obvious adaptations, gives items (ii) and (iii), and concludes the proof.
\end{proof}

\smallskip

\begin{cor} For any fixed $\omega \in \Omega_i'$,
the function $u_{\omega(t)}$ is differentiable at $\big(T-t, \eta(t,\omega)\big)$, whenever $t$ is not a jump time
    of $\omega$ in $(0,T)$ and
\begin{align}
&\partial_x u_{\omega(t)}(T-t,\eta(t,\omega)\big)=\partial_q L_{\omega(t)}\big(\eta(t),-\dot\eta(t,\omega)\big) \label{derivee bornee}\\
&\partial_t u_{\omega(t)}(T-s,\eta(t,\omega)\big)=-H_{\omega(t)}\big(\eta(t,\omega),D_x u_{\omega(t)}\big(T-t,\eta\big(t,\omega)\big)\big)\\
&\qquad\qquad\qquad\qquad\qquad\qquad\qquad\qquad\qquad\qquad\qquad-(B\,\uu)_{\om(t) }\big(T-t,\eta(t,\omega)\big). \nonumber
\end{align}
Moreover if $t$ is a jump time, the same holds by replacing $\omega(t)$ with $\omega(t^-)$ and $\dot\eta$ with $\frac{\dd^-}{\dd \hbox{\em t}}\eta$.
\end{cor}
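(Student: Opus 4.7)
The plan is to reduce the claim, path by path, to a classical calibrated-curve statement for a scalar Hamilton--Jacobi equation. Fix $\omega\in\Omega'_i$ and a non-jump time $t\in(0,T)$, and set $j:=\omega(t)$, $s_0:=T-t$, $x_0:=\eta(t,\omega)$. Since $\omega$ is c\`adl\`ag and $t$ is not a jump time, there is a maximal open interval $(a,b)\ni t$ on which $\omega\equiv j$. Theorem \ref{principal}(i) then tells us that the time-reversed curve $\tilde\eta(s):=\eta(T-s,\omega)$ is of class $C^1$ on $(T-b,T-a)\ni s_0$ with $\dot{\tilde\eta}(s_0)=-\dot\eta(t,\omega)$, and Lemma \ref{minimizer} provides the calibration identity
\[
u_j\bigl(s',\tilde\eta(s')\bigr)-u_j\bigl(s,\tilde\eta(s)\bigr)=\int_s^{s'} L_{G_j}\bigl(\sigma,\tilde\eta(\sigma),\dot{\tilde\eta}(\sigma)\bigr)\,\dd\sigma
\]
for all $T-b<s\leqslant s'<T-a$, where $L_{G_j}(\sigma,y,q):=L_j(y,q)-(B\uu)_j(\sigma,y)$ is conjugate to $G_j(\sigma,y,p):=H_j(y,p)+(B\uu)_j(\sigma,y)$, and $u_j$ is a Lipschitz viscosity solution of $\partial_t u+G_j(t,x,D_xu)=0$.

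The heart of the proof is then the following calibrated-curve statement: such a Lipschitz viscosity solution $u_j$ is differentiable at every interior point $(s_0,x_0)$ of a $C^1$ calibrated curve $\tilde\eta$, with $D_xu_j(s_0,x_0)=\partial_q L_j\bigl(x_0,\dot{\tilde\eta}(s_0)\bigr)$. To establish it, I would first analyse $D^+u_j(s_0,x_0)$. Picking any $(r,p)$ in this set, plugging $y=\tilde\eta(s_0+h)$ into the supertangent inequality, invoking the calibration on the left-hand side, dividing by $h$, and letting $h\to 0^+$ and $h\to 0^-$ separately (using $\dot{\tilde\eta}\in C^0$ to handle both limits), yields the identity
\[
r+\langle p,\dot{\tilde\eta}(s_0)\rangle=L_j\bigl(x_0,\dot{\tilde\eta}(s_0)\bigr)-(B\uu)_j(s_0,x_0).
\]
Combined with the viscosity subsolution inequality $r+H_j(x_0,p)+(B\uu)_j(s_0,x_0)\leqslant 0$, this forces Fenchel's inequality $\langle p,\dot{\tilde\eta}(s_0)\rangle\leqslant L_j(x_0,\dot{\tilde\eta}(s_0))+H_j(x_0,p)$ to be an equality; hypotheses (H4)--(H5) then uniquely pin $p$ to $p_0:=\partial_q L_j(x_0,\dot{\tilde\eta}(s_0))$ and $r$ to $r_0:=-H_j(x_0,p_0)-(B\uu)_j(s_0,x_0)$. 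Hence $D^+u_j(s_0,x_0)\subseteq\{(r_0,p_0)\}$.

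To upgrade this inclusion into genuine differentiability, I would invoke the standard weak KAM fact that the value function of a Bolza problem with a Tonelli Lagrangian is locally semiconcave in the space variable at positive times; this transfers to $u_j$ through the deterministic Lax--Oleinik representation of Proposition \ref{prop deterministic Lax}, the Lipschitz source term $(B\uu)_j$ not affecting the argument. Semiconcavity ensures $D^+_xu_j(s_0,\cdot)(x_0)\neq\varnothing$, so the uniqueness shown above actually forces $D_xu_j(s_0,x_0)=p_0$; the Hamilton--Jacobi equation then fixes $\partial_tu_j(s_0,x_0)=r_0$, and joint differentiability follows because $u_j$ is Lipschitz and both partials exist. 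Substituting back $s_0=T-t$ and $\dot{\tilde\eta}(s_0)=-\dot\eta(t,\omega)$ recovers the stated identities for non-jump $t$. For a jump time $t\in(0,T)$, the same scheme is applied to the maximal preceding interval $[t_k,t]$ on which $\omega\equiv\omega(t^-)$: Theorem \ref{principal}(ii) supplies the left derivative $\frac{\dd^-}{\dd t}\eta(t,\omega)$, Lemma \ref{minimizer} provides the one-sided calibration ending at $(s_0,x_0)$, and the analysis carries through analogously, producing the announced identities with $\omega(t^-)$ in place of $\omega(t)$ and $\frac{\dd^-}{\dd t}\eta$ in place of $\dot\eta$. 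The main obstacle is precisely the semiconcavity step promoting "$D^+$ contained in a singleton" into genuine differentiability: once this classical weak KAM input is taken for granted, the remainder is bookkeeping of signs, time-reversal, and, at jump times, one-sided versus two-sided calibration.
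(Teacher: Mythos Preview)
Your argument is correct and follows essentially the same route as the paper: the paper simply outsources the core step to Proposition~\ref{diffLO} and Corollary~\ref{corrrr} in the Appendix, which establish precisely the calibrated-curve differentiability statement you re-derive inline (superdifferential analysis plus Fenchel equality under strict convexity, upgraded to genuine differentiability via local semiconcavity of $u_j$, citing \cite[Theorem~5.3.8]{CaSi00}). The only cosmetic difference is that the paper uses a one-sided limit $h\to 0^+$ together with the subsolution inequality to force Fenchel equality, whereas you use both $h\to 0^\pm$; the logic and the ingredients are identical.
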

\begin{proof}
The assertion  directly comes from the previous result and Corollary \ref{corrrr} with $t =T-\hat a$,
 $a =T-\hat b$, $\gamma(s)=\eta(T-s,\omega)$ for $s \in[a,t]$,  and by making the
 change of variables from $s$ to  $\tau =T-s$ in the integral appearing in
 the representation formula of $u_{\omega (t)}$, see \eqref{eq Lax-Oleinik GG}.
\end{proof}

\medskip

In the sequel, we will  denote by $\D D \left (0,T;\R^N\right )$ the
Polish space of c\`adl\`ag paths taking values in $\R^N$, endowed with
the Prohorov metric, see \cite{Bill99}.

Keeping in mind Theorem \ref{principal}, we extend $\dot
\eta(\cdot,\omega)$ on the whole $[0,T]$ setting
\[\dot\eta(t, \om) = \left \{\begin{array}{cc}
                      \frac{\dd^+}{\dd \hbox{\em t}}\eta(t,\omega)   & \quad\hbox{if $t$ is a jump time of $\omega$ or $t =0$} \\
                       \frac{\dd^-}{\dd \hbox{\em t}}\eta(t,\omega)  & \quad\hbox{if $t =T$.} \\
                     \end{array} \right .\]

\smallskip

We further define for $t \in [0,T]$ the adjoint curve
\[ P(t,\om)= \partial_q L_{\omega(t)}\big(\eta(t,\omega), - \dot\eta(t,\omega)\big).\]

Note that thanks to \eqref{derivee bornee}, $P(t,\om) \in \partial^C_x u_{\om(t)}
 \big(T-t,\eta(\omega,t)\big) $ for all $t$ and $\om \in \Omega'_i$, where $\partial^C$ stands for the Clarke generalized grandient.
\smallskip

We deduce from the proof of Theorem \ref{principal}

\begin{cor}\label{postprincipal} For any fixed $\omega$, the curve $P(\cdot, \omega)$
is absolutely continuous on intervals  of $[0,T]$ where $\omega$ is constant.
\end{cor}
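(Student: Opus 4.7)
The plan is to reuse the apparatus already assembled in the proof of Theorem \ref{principal}. Fix $\omega \in \Omega_i'$ and pick an interval $[\hat a, \hat b] \subset [0,T]$ on which $\omega$ is constantly equal to some $j \in \ind$. My first step is to recall, exactly as in the proof of Theorem \ref{principal}, that $s \mapsto \eta(s,\omega)$ is a minimizer of the Bolza-type functional
\[
\xi \mapsto u_j\big(T-\hat b, \xi(\hat b)\big) - u_j\big(T-\hat a, \xi(\hat a)\big) + \int_{\hat a}^{\hat b} L_{G_j}\big(T-s, \xi(s), -\dot\xi(s)\big)\,\dd s
\]
among absolutely continuous curves $\xi:[\hat a,\hat b]\to\R^N$ with $\xi(\hat a) = \eta(\hat a,\omega)$. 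I will then invoke Theorem 18.1 of \cite{Cl13} applied to this variational problem: it yields that the a.e.-defined costate $t \mapsto \partial_q L_j\big(\eta(t,\omega), -\dot\eta(t,\omega)\big)$ admits an extension to an absolutely continuous curve $p:[\hat a,\hat b]\to\R^N$.

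The remaining task is to identify $P(\cdot,\omega)$ with $p$ on the whole interval. Here I will rely on items (i)--(iii) of Theorem \ref{principal}: the curve $\eta(\cdot,\omega)$ is continuously differentiable on $(\hat a, \hat b)$, and the extended derivative $\dot\eta(\cdot,\omega)$ introduced before the statement of the corollary is continuous up to the endpoints of the interval. Combined with hypothesis (H5), which ensures that $\partial_q L_j$ is continuous, this forces $P(\cdot,\omega) = \partial_q L_j\big(\eta(\cdot,\omega), -\dot\eta(\cdot,\omega)\big)$ to be continuous on $[\hat a,\hat b]$. Since $P(\cdot,\omega)$ coincides with $p$ almost everywhere by definition, continuity propagates the agreement to every point, and $P(\cdot,\omega)$ inherits the absolute continuity of $p$.

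I do not foresee a genuine obstacle: the content of the statement has essentially been unpacked in the course of proving Theorem \ref{principal}, and the corollary merely records in an autonomous form the absolute continuity of the adjoint trajectory. The only subtle point is the matching between the pointwise everywhere-defined $P(\cdot,\omega)$ and the a.e.-constructed extension supplied by Clarke's theorem, but this is exactly what the continuity of $\dot\eta(\cdot,\omega)$ on compact subintervals of constancy of $\omega$ is designed to handle.
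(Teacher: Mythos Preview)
Your proposal is correct and follows exactly the route the paper intends: the corollary is stated without proof beyond the phrase ``We deduce from the proof of Theorem \ref{principal}'', and you have faithfully unpacked that deduction by re-invoking Theorem 18.1 of \cite{Cl13} and then matching the absolutely continuous costate $p(\cdot)$ with the everywhere-defined $P(\cdot,\omega)$ via the continuity of $\dot\eta(\cdot,\omega)$ established in Theorem \ref{principal}.
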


\smallskip
\begin{prop} The maps $\omega \to \dot\eta(\cdot, \omega)$, $\omega \to P(\cdot,\omega)$ are nonanticipating
random variables from $\Omega$ to $\D D \left (0,T;\R^N\right )$.
In addition, the jump times of $\dot\eta(\cdot, \omega)$, $P(\cdot,\omega)$ and $ \omega$ coincide,
for any $\omega \in \Omega_i'$, with the possible exception of $T$.
\end{prop}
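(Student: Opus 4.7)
The plan is to verify, in order: (i) for each $\omega\in\Omega'_i$ the paths $\dot\eta(\cdot,\omega)$ and $P(\cdot,\omega)$ belong to $\D D(0,T;\R^N)$, with jump set contained in the jumps of $\omega$ restricted to $[0,T)$ plus possibly $T$; (ii) the maps $\omega\mapsto\dot\eta(\cdot,\omega)$ and $\omega\mapsto P(\cdot,\omega)$ are Borel measurable into the Skorohod space; (iii) they are nonanticipating; (iv) the reverse inclusion, namely that every jump time of $\omega$ in $[0,T)$ is actually a discontinuity of both $\dot\eta$ and $P$. Item (i) is essentially a repackaging of what is already available: for $\omega\in\Omega'_i$ with jumps $0<\tau_1<\cdots<\tau_K\leqslant T$, Theorem \ref{principal}(i)--(iii) shows that $\eta(\cdot,\omega)$ is $C^1$ on each $(\tau_k,\tau_{k+1})$ and that its one-sided derivatives at the $\tau_k$, at $0$, and at $T$ coincide with the corresponding one-sided limits of $\dot\eta$. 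Extending $\dot\eta$ by $\frac{\dd^+}{\dd t}\eta$ at jump times and at $0$, and by $\frac{\dd^-}{\dd t}\eta$ at $T$, therefore produces a c\`adl\`ag path with the asserted jump structure. For $P$, Corollary \ref{postprincipal} gives absolute continuity on each $[\tau_k,\tau_{k+1}]$, while the right-continuity of $\omega$ combined with the continuity of $(j,x,q)\mapsto\partial_q L_j(x,q)$---guaranteed by (H4)--(H5)---ensures right-continuity of $P(\cdot,\omega)$ across each $\tau_k$.

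For (ii) and (iii), the Borel $\sigma$-algebra on $\D D(0,T;\R^N)$ is generated by the coordinate projections, so it suffices to prove, for each fixed $t\in[0,T]$, that $\omega\mapsto\dot\eta(t,\omega)$ and $\omega\mapsto P(t,\omega)$ are $\F_t$-measurable. On $\Omega'_i$ the value $\dot\eta(t,\omega)$ is the pointwise limit as $h\to 0^+$ of the finite-difference quotients $h^{-1}\bigl(\eta(t+h,\omega)-\eta(t,\omega)\bigr)$ (or its left-hand analogue at $t=T$); each quotient is $\F_{t+h}$-measurable by the admissibility of $\eta$, so the limit is $\F_{t+}$-measurable, and the right-continuity of $\{\F_t\}_{t\geqslant 0}$ recorded in Section \ref{sez prob} gives $\F_{t+}=\F_t$ (at $t=T$ the left-quotient is directly $\F_T$-measurable). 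Writing $P(t,\omega)=\partial_q L_{\omega(t)}\bigl(\eta(t,\omega),-\dot\eta(t,\omega)\bigr)$ and combining continuity of $\partial_q L$ with the $\F_t$-measurability of $\omega\mapsto\omega(t)$ built into the definition of $\F$ and of $\eta(t,\cdot)$ (admissibility) transfers adaptedness to $P$.

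The main obstacle is (iv). The natural approach is to use Lemma \ref{minimizer} and the equality cases of the Fenchel chain in the proof of Proposition \ref{pro dominationbis} to identify $P(\tau_k^-,\omega)=\partial_x u_{\omega(\tau_k^-)}\bigl(T-\tau_k,\eta(\tau_k,\omega)\bigr)$ and $P(\tau_k,\omega)=\partial_x u_{\omega(\tau_k)}\bigl(T-\tau_k,\eta(\tau_k,\omega)\bigr)$, and then exploit the coupling condition (B) together with the switching structure of the optimal trajectory to conclude that these two vectors must differ whenever $\omega(\tau_k^-)\neq\omega(\tau_k)$; the corresponding jump of $\dot\eta$ is then inherited via the homeomorphism $(x,q)\leftrightarrow(x,\partial_q L_j(x,q))$ provided by (H4)--(H5). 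Ruling out accidental agreement of the two gradient vectors at the specific position $\eta(\tau_k,\omega)$ is the delicate point, whereas (ii)--(iii) are routine progressively-measurable variants of the arguments already used for admissible curves.
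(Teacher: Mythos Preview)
Your steps (i)--(iii) are correct and match the paper's proof: the paper simply observes that $\dot\eta(\cdot,\omega)$ and $P(\cdot,\omega)$ are c\`adl\`ag by construction (via Theorem~\ref{principal} and Corollary~\ref{postprincipal}), invokes Proposition~\ref{dotga} for the progressive measurability of $\dot\eta$, and transfers adaptedness to $P$ through the continuity of $\partial_q L$. Your finite-difference/right-continuous-filtration argument is a slightly more explicit way of saying the same thing.

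The divergence is in your step (iv). You read ``coincide'' as asserting \emph{equality} of jump sets and then try to prove that $\dot\eta$ and $P$ genuinely jump at every jump time of $\omega$. The paper does \emph{not} prove this direction: its entire argument for the jump structure is the single sentence that the discontinuity points ``correspond to'' the jump times of $\omega$, which in context just means the inclusion you already established in (i)---namely that $\dot\eta$ and $P$ are continuous away from the jumps of $\omega$ (and possibly at $T$). Your instinct that the reverse inclusion is delicate is exactly right: nothing in (H1)--(H5) or (B) forbids $\partial_q L_{\omega(\tau_k^-)}\bigl(\eta(\tau_k),-\tfrac{\dd^-}{\dd t}\eta(\tau_k)\bigr)=\partial_q L_{\omega(\tau_k)}\bigl(\eta(\tau_k),-\tfrac{\dd^+}{\dd t}\eta(\tau_k)\bigr)$ at a particular point, and the sketch you give (identifying the one-sided limits with $\partial_x u_j$ at the switching time and then appealing to the coupling) does not close that gap. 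So drop (iv); what the paper actually asserts and proves is only the containment you obtain in (i), and with that reading your proof is complete and in line with the original.
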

\begin{proof} For any $\omega$, the curves $\dot\eta(\cdot, \omega)$, $P(\cdot, \omega)$ are c\`{a}dl\`{a}g
 by construction, with discontinuity points corresponding to the jump times of $\omega$, with the possible exception of $T$
 where $\dot\eta(\cdot, \omega)$ and $P(\cdot, \omega)$ are continuous.

 Thanks to Proposition \ref{dotga},
 $\dot \eta$ is in addition $\Bor([0,t])\otimes\F_t$--progressively measurable, for $t \in [0,T]$, which is, due to its
 c\`{a}dl\`{a}g character, is equivalent of being nonanticipating. The measurability properties of $\eta$, $\dot\eta$ and
 the fact that $\partial_q L$ is continuous in both arguments implies that $P$ is a random variable. It also inherits the nonanticipating
 character of $\eta$, $\dot\eta$.
\end{proof}

We know, thanks to Corollary \ref{postprincipal}, that
$P(\cdot,\omega)$ is a.e. differentiable in $[0,T]$, for any fixed
$\omega$. We  derive from \cite[Theorem 18.1]{Cl13} that it
satisfies a suitable differential  inclusion in $[0,T]$. Combining
this information with \eqref{rara1}, \eqref{rara2} and the very
definition of $P$, we moreover get that the pair
$\big(\eta(\cdot,\omega), P(\cdot, \omega)\big)$ is a trajectory of a
twisted generalized Hamiltonian dynamics, where the equation related
to $P$ is multivalued and contains a coupling term.
\smallskip

\begin{cor}\label{postpostprincipal} Given $\omega \in \Omega$, we have
\[ -\dot\eta(t, \omega)= \partial_p H_{\om(t)}\big(\eta(t,\omega),
P(t,\omega) \big)\quad\hbox{for any $t \in (0,T)$, not jump time of
$\omega$}\]
 and
\begin{eqnarray*}
  \dot P(t, \omega) &\in&  - \partial_x H_{\om(t)}
 \big(\eta(\omega,t), P (\omega,t)\big) -  \sum_{j=1}^m b_{\omega(t)j} \, \partial^C_x u_j
 \big(T-t,\eta(\omega,t)\big) \\
  &=&  \partial_x L_{\om(t)}
 \big(\eta(\omega,t), -\dot \eta (\omega,t)\big) -  \sum_{j=1}^m b_{\omega(t)j} \, \partial^C_x u_j
 \big(T-t,\eta(\omega,t)\big)
\end{eqnarray*}
 for a.e. $t \in [0,T]$, where $\partial^C_x u_j$ indicates the Clarke generalized gradient of $u_j$
 with respect to the state variable. The multivalued linear
 combination in the formula must be understood in the sense of \eqref{minko}.
\end{cor}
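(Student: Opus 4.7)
The first identity is an immediate consequence of Fenchel duality. Under hypotheses (H4)--(H5), each $L_i$ is $C^1$ and strictly convex in its second argument, and $(x,q)\mapsto(x,\partial_q L_i(x,q))$ is a homeomorphism of $\R^N\times\R^N$ whose inverse is $(x,p)\mapsto(x,\partial_p H_i(x,p))$. Since by definition $P(t,\om)=\partial_q L_{\om(t)}\big(\eta(t,\om),-\dot\eta(t,\om)\big)$, inverting this relation at any $t$ that is not a jump time of $\om$ (where, by Theorem \ref{principal}, $\dot\eta$ is the genuine derivative) yields at once $-\dot\eta(t,\om)=\partial_p H_{\om(t)}\big(\eta(t,\om),P(t,\om)\big)$.

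For the differential inclusion on $\dot P$, the plan is to localize on a maximal interval $[\hat a,\hat b]\subset[0,T]$ on which $\om\equiv j$. The proof of Theorem \ref{principal} already shows that $\eta(\cdot,\om)\big|_{[\hat a,\hat b]}$ is an honest deterministic minimizer of the action $\int_{\hat a}^{\hat b} L_{G_j}(T-s,\xi(s),-\dot\xi(s))\,\dd s + u_j(T-\hat b,\xi(\hat b))$ among curves with $\xi(\hat a)=\eta(\hat a,\om)$, and that $P(\cdot,\om)$ extends absolutely continuously to $[\hat a,\hat b]$. I would then invoke Clarke's generalized Euler--Lagrange inclusion (\cite[Theorem 18.1]{Cl13}) applied to the time--dependent Lagrangian $\tilde L(s,x,v):=L_{G_j}(T-s,x,-v)=L_j(x,-v)-(B\uu)_j(T-s,x)$. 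Taking into account that $\partial_v\tilde L=-\partial_q L_j(\cdot,-v)$, so that the momentum variable in Clarke's framework coincides (up to a sign) with $P(\cdot,\om)$, this produces
\[
\dot P(t,\om)\in \partial^C_x \tilde L\big(t,\eta(t,\om),\dot\eta(t,\om)\big)\qquad\hbox{for a.e. $t\in[\hat a,\hat b]$,}
\]
the relevant sign bookkeeping coming from the two factors of $-1$ (one from $\partial_v$, the other from the $\dd/\dd t$ in the EL equation).

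To unpack this right--hand side, I will use that $L_j\in C^1$, so that $\partial^C_x L_j(\eta,-\dot\eta)=\{\partial_x L_j(\eta,-\dot\eta)\}$, together with the Fenchel identity $\partial_x L_j(x,q)=-\partial_x H_j(x,p)$ at dual points $p=\partial_q L_j(x,q)$, applied with $(x,q)=(\eta,-\dot\eta)$. For the coupling term, $(B\uu)_j(T-t,x)=\sum_k b_{jk}u_k(T-t,x)$ is a linear combination (with possibly negative coefficients) of the locally Lipschitz components of $\uu$, and the Clarke calculus for linear combinations gives the Minkowski--type inclusion
\[
\partial^C_x(B\uu)_j(T-t,x)\subseteq \sum_{k=1}^m b_{jk}\,\partial^C_x u_k(T-t,x)
\]
in the sense of \eqref{minko}, which is precisely the convention that makes scalar multiplication by negative numbers act on convex sets. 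Assembling these ingredients yields both forms of the statement, their equivalence being the Fenchel identity just recalled, and concatenating the a.e. identities obtained on each maximal interval of constancy of $\om$ produces the claim for a.e. $t\in[0,T]$.

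The main obstacle I foresee is twofold. First, the mere local Lipschitz regularity of $\uu$ means that $\tilde L$ is not differentiable in $x$ through the coupling term, so the classical (single--valued) Euler--Lagrange equation is unavailable and one is genuinely forced into Clarke's nonsmooth machinery; the set--valued character of the conclusion and the Minkowski convention \eqref{minko} for the negative off--diagonal entries $b_{jk}$ are both unavoidable consequences of this. Second, the sign accounting stemming from the time reversal $s\mapsto T-s$ built into the Lax--Oleinik formula, and from the substitution $v\mapsto -v$ in the Lagrangian, is delicate and must be performed carefully in order to correctly identify the Clarke momentum with our adjoint $P(\cdot,\om)$ and to reconcile the statement with the standard Hamilton equations recovered in the decoupled limit $B=0$.
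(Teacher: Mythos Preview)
Your approach is essentially the one the paper intends: localize on each maximal interval where $\omega$ is constant, quote the deterministic minimality established in the proof of Theorem~\ref{principal}, and invoke Clarke's generalized Euler--Lagrange inclusion \cite[Theorem~18.1]{Cl13}; the first equation is nothing but \eqref{rara2} rewritten via the definition of $P$. The paper's ``proof'' is precisely the sentence preceding the corollary, which points to the same ingredients \eqref{rara1}, \eqref{rara2} and \cite[Theorem~18.1]{Cl13}, so there is no methodological difference to report.

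One caution on the execution: your justification ``two factors of $-1$, one from $\partial_v$, the other from the $\dd/\dd t$ in the EL equation'' is not right as stated, since the classical (and Clarke) Euler--Lagrange relation reads $\frac{\dd}{\dd t}\,\partial_v\Lambda=\partial_x\Lambda$ with no minus sign on the time derivative. With $\Lambda(s,x,v)=L_j(x,-v)-(B\uu)_j(T-s,x)$ one has $\partial_v\Lambda=-P$, hence the raw conclusion is $-\dot P\in\partial^C_x\Lambda$. Whether one lands on the stated sign then depends on exactly which variational problem one feeds into \cite[Theorem~18.1]{Cl13} (the one written in the proof of Theorem~\ref{principal}, or its time--reversed version as in Proposition~\ref{prop deterministic Lax}); you acknowledge this delicacy at the end, and I would simply urge you to carry out the bookkeeping explicitly rather than narrate it, so that the identification between Clarke's costate and your $P$ is unambiguous. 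The use of the Clarke sum rule to pass from $\partial^C_x(B\uu)_j$ to $\sum_k b_{jk}\,\partial^C_x u_k$ in the Minkowski sense \eqref{minko} is correct and is the only genuinely set--valued step.
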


By combining Corollaries \ref{postprincipal}, \ref{postpostprincipal} and the continuity properties of $\eta$,
$\dot\eta$, we further get
\smallskip

\begin{cor} Given $\omega$, the curve $P(\cdot,\omega)$ is Lipschitz continuous on intervals
of $[0,T]$ where $\omega$ is constant.
\end{cor}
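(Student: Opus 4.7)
The plan is to fix $\omega \in \Omega_i'$ and an interval $[a,b] \subset [0,T]$ on which $\omega \equiv j$ for some $j \in \ind$, and to produce a uniform essential bound on $\dot P(\cdot,\omega)$ over $[a,b]$. Since Corollary \ref{postprincipal} already tells us that $P(\cdot,\omega)$ is absolutely continuous on any interval where $\omega$ is constant, such a bound immediately yields Lipschitz continuity by integration.

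First I would invoke Theorem \ref{principal} to conclude that $\eta(\cdot,\omega)$ is continuously differentiable on $(a,b)$ and that $\dot\eta(\cdot,\omega)$ admits continuous one-sided extensions at the endpoints. Consequently the pair $\big(\eta(t,\omega),-\dot\eta(t,\omega)\big)$ ranges, as $t$ varies in $[a,b]$, over a compact subset $K$ of $\R^N\times\R^N$, and the points $\big(T-t,\eta(t,\omega)\big)$ lie in a compact subset of $[0,T]\times\R^N$.

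Next, I would apply the differential inclusion from Corollary \ref{postpostprincipal}: for a.e. $t\in[a,b]$,
\[
\dot P(t,\omega)\ \in\ \partial_x L_j\big(\eta(t,\omega),-\dot\eta(t,\omega)\big)\ -\ \sum_{k=1}^m b_{jk}\,\partial^C_x u_k\big(T-t,\eta(t,\omega)\big).
\]
Under assumption (H5), $L_j\in\D{C}^1$, so $\partial_x L_j$ is continuous and hence bounded on $K$. For the coupling term, because $\uu^0$ is Lipschitz continuous on $\R^N$, Theorem \ref{teo existence evo wcs} implies that each $u_k$ is Lipschitz on $[0,T]\times\R^N$ with some constant $\kappa$; by the very definition of $\partial^C u_k$ as the convex hull of limits of gradients, $|\partial^C_x u_k|$ is then bounded by $\kappa$ everywhere. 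Combining these two bounds,
\[
|\dot P(t,\omega)|\ \leqslant\ \max_{K}|\partial_x L_j|\ +\ \kappa\sum_{k=1}^m|b_{jk}|\ =:\ C\qquad\hbox{for a.e.\ }t\in[a,b],
\]
and integrating the absolutely continuous curve $P(\cdot,\omega)$ gives $|P(t,\omega)-P(s,\omega)|\leqslant C|t-s|$ for all $s,t\in[a,b]$.

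The only point requiring a bit of care is that Corollary \ref{postpostprincipal} provides a multivalued inclusion rather than an equation, so one must bound every admissible selection. This however is immediate from the uniform bound on $|\partial^C_x u_k|$ stemming from the global Lipschitz estimate in Theorem \ref{teo existence evo wcs}, and from the continuity of $\partial_x L_j$ on the compact set $K$; no delicate argument is needed beyond invoking the already-established regularity of $\eta(\cdot,\omega)$ on $[a,b]$.
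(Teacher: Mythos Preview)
Your proof is correct and follows exactly the approach the paper indicates: the paper simply states that the result follows by combining Corollaries \ref{postprincipal} and \ref{postpostprincipal} with the continuity properties of $\eta$, $\dot\eta$, and you have faithfully unpacked those ingredients into a bound on $|\dot P(t,\omega)|$ via the compactness of the range of $(\eta,-\dot\eta)$ on $[a,b]$ and the global Lipschitz estimate on $\uu$ coming from the standing assumption that $\uu^0$ is bounded and Lipschitz.
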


We conclude the section by showing that when the Hamiltonians are of
Tonelli type, the Lax--Oleinik semigroup has a regularizing effect,
similar to the one well known for scalar Hamilton--Jacobi equations.

Given an open convex set $U \subset \R^N$ and $C >0$, we recall that
a function $f: U \to \R$ is said semiconcave with semiconcavity
constant $C$ if
\[f(\lambda \, x + (1 - \lambda) \, y \geq \lambda \, f(x) + (1-\lambda) \, f(y)
- C \, |x-y|^2\]

\smallskip

\begin{teorema}
Assume, in addition to conditions (H1)--(H5), that the Hamiltonians
$H_1,\dots,H_m$ are  of class $\D C^2$ in $\R^N\times\R^N$ with
positive definite Hessian. Then, for any fixed  $t>0$, the function
$\uu(t,\cdot)$ is
 locally semiconcave with linear modulus on $ \R^N$.
\end{teorema}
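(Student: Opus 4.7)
The plan is to adapt to the random setting the classical semiconcavity argument for scalar Tonelli Hamilton--Jacobi equations, exploiting the minimizing admissible curves from Theorem \ref{butterfly} and their structural properties from Section \ref{regolare}. First observe that (H4), (H5), and the positive definiteness of $D^2_p H_i$ lift via the Legendre transform to $L_i \in \D{C}^2(\R^N\times\R^N)$ with positive definite Hessian; in particular, $D^2 L_i$ is bounded on compact subsets of $\R^N\times\R^N$ uniformly in $i\in\ind$. Fix $t>0$, $x_0\in\R^N$, and $R>0$. The goal is to produce $C=C(t,x_0,R)>0$ such that
\[ u_i(t,x+h) + u_i(t,x-h) - 2\,u_i(t,x) \leqslant C\,|h|^2 \]
for every $x\in \overline{B_R(x_0)}$, every $i\in\ind$, and every $|h|$ sufficiently small. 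I would first establish this assuming $\uu^0\in\vLip$, and then extend to $\uu^0\in\vBUC$ via the semigroup identity $\uu(t,\cdot)=\S(t-s)\uu(s,\cdot)$ for small $s>0$ together with the standard Tonelli regularization at positive times.

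Under $\uu^0 \in \vLip$, Theorem \ref{teo existence evo wcs} gives $\uu$ Lipschitz on $[0,t]\times\R^N$ with constant $M$, and hence $\partial^C_x u_j$ is uniformly bounded by $M$. For each $x\in \overline{B_R(x_0)}$ and $i\in\ind$, Theorem \ref{butterfly} produces a minimizing admissible curve $\eta_x$ starting at $x$. By \eqref{derivee bornee}, its adjoint curve satisfies $P_x(s,\omega)\in\partial^C_x u_{\omega(s)}\big(t-s,\eta_x(s,\omega)\big)$, hence $|P_x(s,\omega)|\leqslant M$ for every $s$ and $\omega$. Combining the Fenchel identity
\[ L_{\omega(s)}\big(\eta_x,-\dot\eta_x\big) = P_x\cdot(-\dot\eta_x) - H_{\omega(s)}(\eta_x, P_x) \]
with the superlinear lower bound on $L_j$ inherited from (H3) and the local boundedness of $H_j$, one obtains
\[ \Theta\big(|\dot\eta_x(s,\omega)|\big) \leqslant L_{\omega(s)}\big(\eta_x,-\dot\eta_x\big) \leqslant M\,|\dot\eta_x(s,\omega)| + C_M, \]
where $\Theta:\R_+\to\R$ is a suitable superlinear function independent of $j$. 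Superlinearity of $\Theta$ then forces a uniform bound $|\dot\eta_x(s,\omega)|\leqslant R_0$, and therefore $\big(\eta_x(s,\omega), -\dot\eta_x(s,\omega)\big)$ remains in a compact set $K\subset\R^N\times\R^N$ depending only on $x_0$, $R$, $t$, $M$, and the Hamiltonians.

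For $|h|$ small, introduce the variational competitors
\[ \eta^\pm(s,\omega) := \eta_x(s,\omega) \pm (1-s/t)\,h, \qquad s\in[0,t], \]
which are admissible random curves starting at $x\pm h$ and satisfying $\eta^\pm(t,\omega)=\eta_x(t,\omega)$. Applying the Lax--Oleinik inequality \eqref{random Lax-Oleinik formula} with $\eta^\pm$ produces upper bounds for $u_i(t,x\pm h)$; summing them and subtracting twice the exact representation of $u_i(t,x)$ afforded by $\eta_x$, the endpoint contributions $u^0_{\omega(t)}\big(\eta^\pm(t,\omega)\big)$ cancel, leaving
\[ u_i(t,x+h) + u_i(t,x-h) - 2\,u_i(t,x) \leqslant \EE_i\Big[\int_0^t \Delta_s(\omega)\,\dd s\Big], \]
where $\Delta_s(\omega)$ is the symmetric second difference of $L_{\omega(s)}$ at $\big(\eta_x,-\dot\eta_x\big)$ in the joint directions $\big(\pm(1-s/t)h,\,\pm h/t\big)$. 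The $\D{C}^2$ regularity of $L_j$ and Taylor's theorem on the compact set $K$ yield
\[ |\Delta_s(\omega)| \leqslant \|D^2 L\|_{L^\infty(K)}\big((1-s/t)^2+1/t^2\big)\,|h|^2 + o(|h|^2), \]
uniformly in $s$ and $\omega$. Integrating in $s\in[0,t]$ and taking expectation delivers the desired semiconcavity estimate with $C$ depending only on $t$, $x_0$, $R$, and the data.

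The main obstacle I anticipate is the uniform velocity bound in the second paragraph: without it, the second-order Taylor estimate would control $|\Delta_s|$ only by $\|D^2L\|$ on an uncontrolled region and the argument would collapse. The saving structural input is precisely the identification $P_x\in \partial^C_x u_{\omega(s)}$ established in Section \ref{regolare}, which transfers the Lipschitz bound on $\uu$ to a pointwise bound on the adjoint curve and, via Legendre duality combined with the superlinearity of the Hamiltonians, to a uniform pointwise bound on $\dot\eta_x$ that is independent of $x$, $s$, and $\omega$.
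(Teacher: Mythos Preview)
Your proposal is correct and follows essentially the same route as the paper: take a minimizing admissible curve $\eta$ for $u_i(t,x)$, use the shifted competitors $\eta^\pm(s,\omega)=\eta(s,\omega)\pm(1-s/t)h$, cancel the terminal terms $u^0_{\omega(t)}(\cdot)$, and bound the resulting symmetric second difference of the Lagrangians by a second--order estimate on a compact set determined via \eqref{derivee bornee}. The paper compresses your uniform velocity bound into a single parenthetical remark (``relatively compact by \eqref{derivee bornee}''), and invokes local semiconcavity of the $L_j$ (via Fathi) rather than $\D C^2$ plus Taylor, but these are the same ingredients; note also that the ambient Section~\ref{regolare} already assumes $\uu^0$ bounded Lipschitz, so your BUC extension step is superfluous here.
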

\begin{proof}
We first remark that, under the conditions assumed on the
Hamiltonians, the associated Lagrangians are locally semiconcave in
$(x,q)$  with a linear modulus, see \cite{Fa}. Let $t>0$ and
$i\in\ind$ be fixed. We want to prove that the function
$u_i(t,\cdot)$ is locally semiconcave with linear modulus. Indeed,
consider $\eta $ realizing the minimum in the Lax--Oleinik formula
\eqref{random Lax-Oleinik formula} for $u_i(t,x)$, for some $x\in
\R^N$.

Given $z\in \R^N$, we define
\[ \eta_\pm (s,\om) = \eta(s,\om) \pm \frac{t-s}{t} z.\]
Those two curves are admissible and start at $x\pm z$. We can
estimate
\begin{align*}
u_i(t,x+z) &+ u_i(t,x-z) - 2u_i(t,x)  \\
\leqslant \EE_i\Big [
& u^0_{\omega(t)}\big(\eta_+(t,\omega)\big)+\int_0^t
  L_{\omega(s)}\big(\eta_+(s,\omega),-\dot\eta_+(s,\omega)\big)\,\dd s\\
  + & u^0_{\omega(t)}\big(\eta_-(t,\omega)\big)
+ \int_0^t L_{\omega(s)}\big(\eta_-(s,\omega),-\dot\eta_-(s,\omega)\big) \,\dd s \\
 -& 2u^0_{\omega(t)}\big(\eta(t,\omega)\big) -2\int_0^t L_{\omega(s)}\big(\eta(s,\omega),-\dot\eta(s,\omega)\big)\,\dd s
  \Big] \\
 =   \EE_i\Big [ &\int_0^t \Big[L_{\omega(s)}\big(\eta(s,\omega)- \frac{t-s}{t} z,-\dot\eta(s,\omega)-\frac1t z\big) \\
 &\quad + L_{\omega(s)}\big(\eta(s,\omega)+ \frac{t-s}{t} z,-\dot\eta(s,\omega)+\frac1t z\big) \\
 &\quad -2 L_{\omega(s)}\big(\eta(s,\omega),-\dot\eta(s,\omega)\big)\Big]\,\dd s      \Big]\\
 \leqslant &  \EE_i\Big [  \int_0^t 2 \, C\frac{(t-s)^2+1}{t^2}|z|^2 \; \dd s \Big]= 2 \, C\Big(\frac t3 + \frac 1t\Big)|z|^2,
\end{align*}
where $C$ is a constant of semiconcavity of the $L_j$, $j\in \ind$ restricted to a neighborhood of the
 $(\eta,\dot \eta)$ and $(\eta_\pm,\dot \eta_\pm)$ that  are relatively compact by  \eqref{derivee bornee} (uniformly with respect to $\om$).

\end{proof}

\begin{appendix}
\begin{section}{PDE material}\label{appendix PDE}
\subsection{For systems}
This section is devoted to the proofs of Theorem \ref{teo existence evo wcs} and Proposition \ref{prop comparison}. We prove a preliminary comparison result first:

\begin{prop}\label{prop Lip comparison}
Let $T>0$ and $\vv,\,\ww:[0,T]\times\R^N\to\R^m$ be a bounded upper
semicontinuous subsolution and a bounded lower semicontinuous
supersolution of \eqref{HJS}, respectively. Let us furthermore assume that either
$\vv$ or $\ww$ are in $\tvLip$.
Then
\[
 v_i(t,x)-w_i(t,x)\leqslant\max_{1\leqslant i \leqslant m}\,\sup_{\R^N} \big(v_i(0,\cdot)-w_i(0,\cdot)\big)
\]
for all $(t,x)\in\ccyl$ and $i\in\ind$.
\end{prop}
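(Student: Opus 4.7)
The plan is to argue by contradiction via standard doubling of variables, suitably adapted to the weakly coupled structure. Without loss of generality assume $\vv\in\tvLip$ (the other case is symmetric, with the roles of $x$ and $y$ exchanged below). Set $M:=\max_{i}\sup_{\R^N}\bigl(v_i(0,\cdot)-w_i(0,\cdot)\bigr)$ and suppose for contradiction that $v_{i_0}(t_0,x_0)-w_{i_0}(t_0,x_0)>M+2\delta$ for some $(i_0,t_0,x_0)$ and $\delta>0$. For small parameters $\eps,\alpha,\beta>0$ introduce
\[
\Psi_i(t,s,x,y):=v_i(t,x)-w_i(s,y)-\frac{|x-y|^2+(t-s)^2}{2\eps}-\beta(|x|^2+|y|^2)-\frac{\alpha}{T-t},
\]
maximized over $i\in\ind$, $(t,s)\in[0,T)^2$, $x,y\in\R^N$. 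The $\beta$--coercivity plus upper boundedness of $\vv-\ww$ force attainment at some $(\bar i,\bar t,\bar s,\bar x,\bar y)$. Standard penalty estimates, leveraging the Lipschitz bound on $\vv$, yield $|\bar x-\bar y|,|\bar t-\bar s|=O(\eps)$ and $\beta(|\bar x|^2+|\bar y|^2)=O(1)$, hence $\beta|\bar x|,\beta|\bar y|\to 0$ as $\beta\to 0$; choosing $\alpha$ small keeps the maximal value above $M+\delta$, ruling out $\bar t=0$ via the initial condition, while the singular factor $\alpha/(T-t)$ prevents $\bar t\to T$.

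At such an interior maximum, freezing $(\bar s,\bar y)$ produces a $\D{C}^1$ supertangent to $v_{\bar i}$ at $(\bar t,\bar x)$ with differentials $\bigl(\tfrac{\bar t-\bar s}{\eps}+\tfrac{\alpha}{(T-\bar t)^2},\,\tfrac{\bar x-\bar y}{\eps}+2\beta\bar x\bigr)$, while freezing $(\bar t,\bar x)$ produces a $\D{C}^1$ subtangent to $w_{\bar i}$ at $(\bar s,\bar y)$ with differentials $\bigl(\tfrac{\bar t-\bar s}{\eps},\,\tfrac{\bar x-\bar y}{\eps}-2\beta\bar y\bigr)$. The USC/LSC hypotheses on $\vv$, $\ww$ give $\vv^{*}=\vv$ and $\ww_{*}=\ww$, so the viscosity inequalities at index $\bar i$ apply directly; subtracting the subsolution inequality from the supersolution one yields
\[
\frac{\alpha}{(T-\bar t)^2}+H_{\bar i}\!\Bigl(\bar x,\tfrac{\bar x-\bar y}{\eps}+2\beta\bar x\Bigr)-H_{\bar i}\!\Bigl(\bar y,\tfrac{\bar x-\bar y}{\eps}-2\beta\bar y\Bigr)+\sum_{j=1}^{m}b_{\bar i j}\,a_j\leqslant 0,
\]
where $a_j:=v_j(\bar t,\bar x)-w_j(\bar s,\bar y)$.

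The decisive step, specific to the system setting and the only ingredient beyond the scalar Crandall--Lions doubling, is the sign analysis of the coupling sum. The maximality of $\bar i$ in $\Psi_i$ gives $a_{\bar i}\geqslant a_j$ for every $j$; combining with assumption (B), i.e.\ $b_{\bar i j}\leqslant 0$ for $j\neq\bar i$ and $b_{\bar i\bar i}=-\sum_{j\neq\bar i}b_{\bar i j}$, one rewrites
\[
\sum_{j=1}^{m}b_{\bar i j}\,a_j=\sum_{j\neq\bar i}b_{\bar i j}(a_j-a_{\bar i})\geqslant 0,
\]
as a sum of products of two non-positive factors. The Hamiltonian difference is then controlled by the uniform continuity of $H_{\bar i}$ on $\R^N\times B_R$ from (H1): since $|\bar x-\bar y|/\eps$ stays bounded (by $\mathrm{Lip}(\vv)$), $|\bar x-\bar y|\to 0$, and $\beta|\bar x|,\beta|\bar y|\to 0$, this difference is $o(1)$ as $\eps,\beta\to 0$. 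The displayed inequality therefore reduces to $\alpha/(T-\bar t)^2\leqslant o(1)$, contradicting positivity; sending $\alpha\to 0$ at the end concludes. The main obstacle is precisely the sign argument for the coupling, and the Lipschitz hypothesis on one of $\vv,\ww$ is what allows us to bypass Crandall--Ishii matrix estimates by keeping both gradients penalized only through the simple quadratic $|x-y|^2/(2\eps)$.
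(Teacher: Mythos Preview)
Your argument is correct and follows the same doubling-of-variables strategy as the paper, which simply cites Proposition~3.1 of \cite{CamLey} and notes that the Lipschitz assumption on $\vv$ (or $\ww$) forces the test gradient $\tfrac{\bar x-\bar y}{\eps}\pm 2\beta\bar x$ to stay bounded, so that (H1) alone controls the Hamiltonian difference without the extra structural condition $|H_i(x,p)-H_i(y,p)|\leqslant\widetilde\nu\big((1+|p|)|x-y|\big)$. Two minor remarks: you should also exclude $\bar s=0$ (same argument as for $\bar t=0$), and the final ``sending $\alpha\to 0$'' is unnecessary since the contradiction $\alpha/T^2\leqslant o(1)$ already arises with $\alpha>0$ fixed; also, the reference to Crandall--Ishii matrix estimates is misplaced, as those pertain to second-order equations---what the Lipschitz hypothesis really buys here is dispensing with the $x$-regularity condition on $H_i$ just mentioned.
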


\begin{proof}
The result essentially follows from Proposition 3.1 in
\cite{CamLey}, which covers our case under the additional assumption
that there exists a continuity modulus $\widetilde\nu$ such that
\begin{eqnarray}\label{eq app pde condition}
\max_{i\in\ind}|H_i(x,p)-H_i(y,p)|\leqslant
\widetilde\nu\big((1+|p|)|x-y|\big)\quad\hbox{for all
$(x,p)\in\R^N\times\R^N$.}
\end{eqnarray}
When either $\vv$ or $\ww$ is Lipschitz continuous in $\clTcyl$, such hypothesis can be safely removed. Indeed, with the notation used in \cite{CamLey}, we see that either $\overline p +2\beta\overline x$ or $\overline p -2\beta\overline y$ is bounded,  uniformly with respect to the parameters $\alpha, \beta, \eta, \mu$, since it belongs to the super differential of $v_{\overline j}(\overline t,\cdot)$ at $\overline x$ or to the subdifferential of $w_{\overline j}(\overline s,\cdot)$ at $\overline y$. Using the estimates (3.3) and (3.4) in \cite{CamLey}, we conclude that both $\overline p +2\beta\overline x$ and $\overline p -2\beta\overline y$ are bounded, uniformly with respect to the parameters, and the result follows without invoking condition \eqref{eq app pde condition}.
\end{proof}

We now proceed by proving the existence part in the statement of Theorem \ref{teo existence evo wcs}.\medskip

\noindent{\em Proof of Theorem \ref{teo existence evo wcs} (Existence of solutions).} Let us first assume $\uu^0$ bounded and Lipschitz on $\R^N$ and fix $T>0$. Let us denote by $b_\infty:=\max_i\sum_{j=1}^m |b_{ij}|$ \ and pick a constant $C$ such that
\begin{equation}\label{app big C}
C>\max_{i\in\ind} \|H_i(x,Du^0_i(x))\|_{\infty}+b_\infty\|\uu^0\|_\infty.
\end{equation}
Set \ $M:=C+b_\infty(\|\uu^0\|_\infty+CT)$ and choose $n\in\N$ large enough so that the Hamiltonians
\[
 \widetilde H_i(x,p):=\min\left\{H_i(x,p),|p|+n \right\}\qquad\hbox{$(x,p)\in\R^N\times\R^N$ and $i\in\ind$}
\]
satisfy
\begin{eqnarray}\label{app modified Ham}
\widetilde H_i =H_i\quad\hbox{on $\big\{(x,p):\, \max_i \widetilde H_i(x,p)< M+1\,\big\}$}
\qquad\hbox{for all $i\in\ind$.}
\end{eqnarray}
The modified Hamiltonians $\widetilde H_i$ satisfy the additional condition \eqref{eq app pde condition}, thus
we can apply Proposition 3.1 in \cite{CamLey} and infer the existence of a function $\uu\in\tvBUC$ which solves \eqref{HJS} in $\Tcyl$ with $\widetilde H_i$ in
place of $H_i$ and satisfying the initial condition $\uu(0,\cdot)=\uu^0$ on $\R^N$.

We shall now prove that $\uu$ is Lipschitz in $\clTcyl$. To this aim, first notice that the functions $\uu^+(t,x):=\uu^0(x)+tC\1$ and  $\uu^-(t,x):=\uu^0(x)-tC\1$ are a Lipschitz super and subsolution to \eqref{HJS} in $\Tcyl$ with $\widetilde H_i$ in place of $H_i$. Therefore, by Proposition 3.1 in \cite{CamLey}, we get in particular
\begin{equation}\label{app t-Lip estimate}
|u_i(t,x)-u^0_i(x)|\leqslant Ct\qquad\hbox{for every $(t,x,i)\in\clTcyl\times\ind$.}
\end{equation}
By applying Proposition 3.1 in \cite{CamLey} again to $\uu(h+\cdot,\cdot)$ and to the solutions $\ww^\pm:=\uu\pm \|\uu(h,\cdot)-\uu^0\|_\infty$, we get
\[
\|\uu(t+h,\cdot)-\uu(t,\cdot)\|_{\infty}
\leqslant
\|\uu(h,\cdot)-\uu^0\|_{\infty}
\leqslant
C\,h\qquad\hbox{for every $h>0$.}
\]
This shows that the function $\uu$ is $C$--Lipschitz in $t$. By making use of the fact that $\uu$ is a solution to \eqref{HJS} in $\Tcyl$ with $\widetilde H_i$ in
place of $H_i$ and of the estimate \eqref{app t-Lip estimate}, we get
\[
 \widetilde H_i(x,D_xu_i(t,x))\leqslant
 -\partial_t u_i(t,x)-\big(B\uu(t,x)\big)_i
 \leqslant
 C+b_\infty(\|\uu^0\|_\infty+CT)=M
 \]
in the viscosity sense in $\Tcyl$. By coercivity of $\widetilde H_i$ in $p$, we infer that $u_i(t,\cdot)$ is Lipschitz for every $t\in (0,T)$ with $(x,D_x u_i(t,x))\in\{\widetilde H_i(x,p)\leqslant M\}$ for a.e. $x\in\R^N$, see for instance Lemma 2.5 in \cite{barles_book}. In view of \eqref{app modified Ham}, this finally implies that $\uu$ is a solution of \eqref{HJS} in $\Tcyl$ as well.

Let now assume that $\uu^0\in\vBUC$. Let $\mathbf{g}^n$ be a sequence of Lipschitz functions in $\vBUC$ uniformly converging to $\uu^0$ on $\R^N$, and denote by $\uu^n$ the corresponding bounded and Lipschitz solution to \eqref{HJS} in $(0,T)\times\R^N$ with initial datum $\mathbf{g}^n$.
By Proposition \ref{prop Lip comparison} we have\ $\|\uu^m-\uu^n\|_{L^\infty(\clTcyl)}
\leqslant
\|\mathbf{g}^m-\mathbf{g}^n\|_{L^\infty(\R^N)},$\
that is, $(\uu^n)_n$ is a Cauchy sequence in $\clTcyl$ with respect to the sup--norm. Hence the Lipschitz continuous functions $\uu^n$ uniformly converge to a function $\uu$ on $\clTcyl$, which is therefore bounded and uniformly continuous on $\clTcyl$. By the stability of the notion of viscosity solution, we conclude that $\uu$ is a solution of \eqref{HJS} with initial datum $\uu^0$. This completes the proof since $T>0$ was arbitrarily chosen.
\qed
\vspace{2ex}

The uniqueness part in Theorem \ref{teo existence evo wcs} is guaranteed by the comparison principle stated in Proposition \ref{prop comparison}, that we prove now. The proof makes use of the existence result just established and of Proposition \ref{prop Lip comparison}.\\

\noindent{\em Proof of Proposition \ref{prop comparison}.} Up to trivial cases and up to adding a constant vector of the form $C\1$ to $\ww$, we reduce the assertion to proving that $\vv\leqslant \ww$ in $\clTcyl$ for every fixed $T>0$ when $\vv(0,\cdot)\leqslant \ww(0,\cdot)$ in $\R^N$. Let us fix $T>0$ and, for every $\eps>0$,  set $\ww^\eps:=\ww+\eps\1$. Since either $\vv(0,\cdot)$ or $\ww^\eps(0,\cdot)$ are in $\vBUC$ and $\ww^\eps(0,\cdot)-\vv(0,\cdot)\geqslant \eps\1$, we can find $\uu^0\in\vBUC \cap\vLip$ such that \ $\vv(0,\cdot)\leqslant \uu^0\leqslant \ww^\eps(0,\cdot)$\ in $\R^N$. According to the existence part of Theorem \ref{teo existence evo wcs} proved above, we know that there exists a Lipschitz function $\uu\in\tvBUC$ which solves \eqref{HJS} in $\Tcyl$ with initial datum $\uu_0$. By applying Proposition \ref{prop Lip comparison} to the pair $\vv,\,\uu$ and $\uu,\,\ww^\eps$, respectively, we get \ $\vv\leqslant \uu\leqslant \ww^\eps$\ \ in $\clTcyl$. The assertion follows by sending
$\eps\to 0^+$. \qed

\vspace{2ex}

\subsection{For a single equation}
We now turn back to results concerning a single equation.
Let  $G:[0,T]\times\R^N\times\R^N\to\R$ be a continuous Hamiltonian such that, for every fixed $(t,x)\in[0,T]\times\R^N$, $G(t,x,\cdot)$ is convex in $\R^N$, and there exist two superlinear functions $\alpha,\beta:\R_+\to\R$ such that
\[
\alpha(|p|)\leqslant {G(t,x,p)}\leqslant \beta(|p|)$\qquad for all $(t,x,p)\in [0,T]\times\R^N\times\R^N.
\]
We will denote by $L_G:[0,T]\times\R^N\times\R^N\to\R$ the Lagrangian associated with $G$ through the Fenchel transform.
The following holds:

\begin{prop}\label{L.O.t}
Let $u:[0,T]\times\R^N\to\R$ be a Lipschitz  solution of
\begin{equation}\label{app eq G}
\frac{\partial u}{\partial t} + G(t,x,D_x u) = 0\qquad\hbox{in $(0,T)\times\R^N$}.
\end{equation}
Then for every $0\leqslant a<t\leqslant T$, the following identity holds:
\begin{equation}\label{eq Lax-Oleinik G}
u(t,x)=\inf_{\xi(t)=x}\Big( u\big(a,\xi(a)\big)+\int_a^t L_G\big(s,\xi (s), \dot \xi (s) \big) \dd s\Big),\qquad x\in\R^N,
\end{equation}
where the infimum is taken by letting $\xi$ vary in the family of absolutely continuous curves from $[a,t]$ to $\R^N$. Moreover, such an infimum is a minimum.
\end{prop}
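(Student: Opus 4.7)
The plan is to prove the two inequalities in \eqref{eq Lax-Oleinik G} separately and obtain attainment of the infimum via Tonelli's direct method. Denote the right-hand side of \eqref{eq Lax-Oleinik G} by $V(t,x)$. For the upper bound $u\leqslant V$, I would fix any absolutely continuous $\xi:[a,t]\to\R^N$ with $\xi(t)=x$. Since $u$ is Lipschitz, $s\mapsto u(s,\xi(s))$ is absolutely continuous and hence differentiable $\leb^1$-almost everywhere; at any such $s$, Lemma \ref{clarke} applied to $\tau\mapsto (\tau,\xi(\tau))$ in $\R^{1+N}$ yields $(r,p)\in\partial^C u(s,\xi(s))$ realizing the derivative as $r+\langle p,\dot\xi(s)\rangle$. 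The scalar analogue of Proposition \ref{prop subsol equivalent def} identifies $u$ as a Clarke subsolution of \eqref{app eq G}, so $r+G(s,\xi(s),p)\leqslant 0$, and Fenchel's inequality gives $\langle p,\dot\xi(s)\rangle-G(s,\xi(s),p)\leqslant L_G(s,\xi(s),\dot\xi(s))$. Combining and integrating over $[a,t]$, then taking the infimum over $\xi$, establishes $u(t,x)\leqslant V(t,x)$.

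Existence of a minimizer follows by the direct method, as in Proposition \ref{prop deterministic curves}. The coercivity bound $G(s,y,p)\leqslant\beta(|p|)$ implies that $L_G(s,y,\cdot)$ is bounded below by a superlinear function, so any minimizing sequence $\xi_n$ (with $\xi_n(t)=x$) satisfies $\int_a^t \Theta(|\dot\xi_n(s)|)\,\dd s\leqslant C$ for some superlinear $\Theta$. Arzelà--Ascoli together with the Dunford--Pettis theorem yield, along a subsequence, $\xi_n\to\xi^*$ uniformly and $\dot\xi_n\rightharpoonup\dot\xi^*$ weakly in $L^1$; Tonelli's lower semicontinuity theorem, applicable thanks to convexity of $L_G$ in its last argument, combined with the continuity of $u(a,\cdot)$, guarantees that $\xi^*$ attains the infimum.

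The reverse inequality $V\leqslant u$ I would obtain by showing that $V^*$ is a bounded viscosity subsolution of \eqref{app eq G} on $(a,T)\times\R^N$ with $V^*(a,\cdot)=u(a,\cdot)$, and then invoking the scalar comparison principle (the single-equation case of Proposition \ref{prop comparison}). The subsolution property rests on a standard dynamic programming argument: concatenation of admissible curves gives, for any $q\in\R^N$ and small $h>0$,
\[
V(t_0,x_0)\leqslant V(t_0-h,x_0-hq)+\int_{t_0-h}^{t_0} L_G\big(s,x_0+(s-t_0)q,q\big)\,\dd s;
\]
testing against a $C^1$ supertangent $\psi$ of $V^*$ at $(t_0,x_0)$ along a sequence $(t_n,x_n)\to(t_0,x_0)$ realizing $V^*(t_0,x_0)$, dividing by $h$, sending $h\to 0^+$, and taking the supremum over $q$ via Fenchel duality produce $\partial_t\psi(t_0,x_0)+G(t_0,x_0,D_x\psi(t_0,x_0))\leqslant 0$. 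The boundary identity $V^*(a,\cdot)=u(a,\cdot)$ is immediate from $V(a,x)=u(a,x)$ together with the estimate $V(s,y)\leqslant u(a,y)+C(s-a)$ obtained by taking constant test curves. The main obstacle will be this derivation of the subsolution property of $V^*$ and the accompanying boundedness and upper-semicontinuity bookkeeping; everything else reduces to routine Clarke subdifferential calculus, Fenchel duality, and the direct method of the calculus of variations.
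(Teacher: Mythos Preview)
Your argument for $u\leqslant V$ and for the existence of a minimizer matches the paper's. For the reverse inequality $V\leqslant u$, however, you take a genuinely different route. The paper does not verify directly that $V$ (or $V^*$) is a subsolution; instead it truncates the Lagrangian, setting $L_R=L_G$ on $[0,T]\times\R^N\times B_R$ and $+\infty$ elsewhere, and considers the corresponding value function $V_R\geqslant V$. Because competitors for $V_R$ are $R$--Lipschitz curves, standard optimal--control results (cited from Bardi--Capuzzo Dolcetta) give that $V_R$ is itself a \emph{Lipschitz} solution of the equation with the truncated Hamiltonian $G_R$. Choosing $R$ large enough that $G_R=G$ on the sublevel set containing all sub/superdifferentials of $u$, one sees that $u$ also solves the $G_R$--equation, and uniqueness of Lipschitz solutions yields $u=V_R\geqslant V$.

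Your route---establishing that $V^*$ is a subsolution via the dynamic programming inequality and then invoking comparison against the Lipschitz supersolution $u$---is also sound; it is essentially the scalar analogue of the argument behind Theorem \ref{teo domination} and Proposition \ref{prop domination}. The paper's truncation has the advantage of landing directly in a class (Lipschitz solutions, Hamiltonian Lipschitz in $p$) where off--the--shelf uniqueness applies, bypassing any bookkeeping about the semicontinuity or boundedness of $V$. Your approach is more self--contained but requires a little care with the comparison principle you invoke: Proposition \ref{prop comparison} assumes both functions bounded, which the bare hypotheses of Proposition \ref{L.O.t} do not guarantee for $u$, though this holds in every use made of the proposition in the paper.
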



\begin{proof}
Let us first prove the assertion for $a=0$. It is always true that
\begin{eqnarray}\label{app def value function}
u(t,x)\leqslant V(t,x):=\inf_{\xi(t)=x}\Big( u\big(0,\xi(0)\big)+\int_0^t L_G\big(s,\xi (s), \dot \xi (s) \big) \dd s\Big),
\end{eqnarray}
for every $x\in \R^N$.
Indeed, let $\xi:[0,t]\to\R^N$ be any absolutely continuous curve with $\xi(t)=x$. Then, for almost every $s\in [0,T]$, we have
\begin{eqnarray*}
\frac \dd{\dd \tau} u\big(\tau,\xi(\tau)\big)_{\mbox{\Large $|$}_{\tau=s}}
=
p_s+\langle p_{\xi(s)},\dot \xi(s) \rangle
\leqslant
p_s+G(s,\xi(s),p_{\xi(s)})+L_G\big(s,\xi(s),\dot\xi(s)\big),
\end{eqnarray*}
where $\big(p_s,p_{\xi(s)}\big)$ is a suitable element of the Clarke
generalized gradient of $u$ at $\big(s,\xi(s)\big)$, chosen according to
Lemma \ref{clarke}. By integrating the above inequality between $0$
and $t$ and by taking into account that $u$ is a subsolution of
\eqref{eq G}
 and $\xi$ was arbitrarily chosen, we readily get \eqref{app def value function}.

We therefore have to prove the converse inequality. Let us fix $R>0$ and define
$L_R = L_G$ on $[0,T]\times\R^N \times B_R$ and $+\infty$ elsewhere. Let us set
$$
V_R(t,x):= \inf_{\xi(t)=x}\Big( u\big(0,\xi(0)\big)+\int_0^t L_R\big(s,\xi (s), \dot \xi (s) \big) \dd s\Big),\qquad (t,x)\in (0,T]\times\R^N,
$$
where the infimum is taken by letting $\xi$ vary in the family of absolutely continuous curves from $[0,t]$ to $\R^N$. Clearly, it is the same to take the infimum over $R$--Lipschitz curves. Moreover, $V_R \geqslant V$.
As the curves take velocities in a compact set, by basic results of Optimal Control Theory (see \cite[Theorem 3.17 and Exercise 3.7]{bardi}), the function $V_R$ is a Lipschitz continuous  solution of
\begin{equation}\label{eq G_R}
\frac{\partial V_R}{\partial t} + G_R(t,x,D_x V_R) = 0\qquad\hbox{in $(0,T)\times\R^N$}
\end{equation}
where $G_R$ is the convex dual of $L_R$. It is easily checked from its very definition that $G_R$ is $R$--Lipschitz in $p$, and uniformly continuous in $[0,T]\times K\times \R^N$, for every compact set $K\subset \R^N$. Let us denote by $\kappa$ a Lipschitz constant of $u$ in $[0,T]\times \R^N$ and choose $R$ big enough
 such that $G=G_R$ on $[0,T]\times \R^N \times B_{\kappa+1}$. From the definition of (viscosity) solutions and the fact that sub and supertangents to $u$ have norms bounded by $\kappa$, it follows that $u$ also solves \eqref{eq G_R} in $[0,T] \times \R^N$. Now $u$ and $V_R$ are both Lipschitz solutions to \eqref{eq G_R} with same initial data, hence, by applying \cite[Theorem 3.12]{bardi}, we conclude that $u\equiv V_R$ in $[0,T]\times \R^N$. Since $V_R\geqslant V$, we finally get $u\equiv V$.

The fact that \eqref{eq Lax-Oleinik G} holds for $0\leqslant a<t\leqslant T$ is due to the fact that the function $V$ defined by \eqref{app def value function} satisfies the Dynamic Programming Principle. The fact that the infimum in \eqref{eq Lax-Oleinik G} is attained follows from classical results of the Calculus of Variations.
\end{proof}

We proceed by proving some differentiability properties of the solution $u$ at points belonging to the support of a minimizing curve for \eqref{eq Lax-Oleinik G}.

\begin{prop}\label{diffLO}
Let $G$ be as above, and assume moreover that $G(t,x,p)$ is strictly convex in $p$ and Lipschitz in $x$, locally with respect to $(t,x,p)$. Let $u$ be a Lipschitz solution of the evolutive Hamilton--Jacobi equation \eqref{eq G} in $[0,T]\times\R^N$.
Let $0<a< t<T$ and $\gamma:[a,t]\to\R^N$ a curve that realizes the infimum in \eqref{eq Lax-Oleinik G}. Assume $a$ is a differentiability point of $\gamma $ and a Lebesgue point of $s\mapsto L_G\big(s,\gamma (s),\dot\gamma (s)\big)$. Then $u$  is differentiable in $\big(a,\gamma (a)\big)$ and
\begin{eqnarray*}
\quad \partial_x u(a,\gamma (a)\big)=\partial_q L_G\big(a,\gamma (a),\dot\gamma (a)\big),\qquad \partial_t u(a,\gamma (a)\big)=-G\big(a,\gamma (a),D_x u\big(a,\gamma (a)\big)\big).
\end{eqnarray*}
\end{prop}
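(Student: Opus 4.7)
The plan is to show that both the superdifferential $D^+u(a,\gamma(a))$ and the subdifferential $D^-u(a,\gamma(a))$ reduce to the single element $(p_t^\ast,p_x^\ast)$, where $p_x^\ast:=\partial_q L_G(a,\gamma(a),\dot\gamma(a))$ and $p_t^\ast:=-G(a,\gamma(a),p_x^\ast)$; this is equivalent to $u$ being differentiable at $(a,\gamma(a))$ with the claimed derivatives. The first ingredient is the calibration identity
\[
u\big(a+h,\gamma(a+h)\big)-u\big(a,\gamma(a)\big)=\int_a^{a+h}L_G\big(\tau,\gamma(\tau),\dot\gamma(\tau)\big)\,\dd\tau\qquad\hbox{for every $h\in[0,t-a]$,}
\]
which follows from Proposition \ref{L.O.t} and the dynamic programming principle: the restriction of a minimizer to a subinterval is again a minimizer.

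Fix $(p_t,p_x)\in D^+u(a,\gamma(a))$ with a $\D{C}^1$ supertangent $\phi$. Writing the inequality $\phi(a+h,\gamma(a+h))-\phi(a,\gamma(a))\geq u(a+h,\gamma(a+h))-u(a,\gamma(a))$, using the calibration on the right, a first-order Taylor expansion of $\phi$ combined with the differentiability of $\gamma$ at $a$ on the left, and the Lebesgue point property of $s\mapsto L_G(s,\gamma(s),\dot\gamma(s))$, then dividing by $h>0$ and letting $h\to 0^+$, we arrive at
\[
p_t+\langle p_x,\dot\gamma(a)\rangle\geq L_G\big(a,\gamma(a),\dot\gamma(a)\big).
\]
Combined with the viscosity subsolution inequality $p_t+G(a,\gamma(a),p_x)\leq 0$ and the Fenchel inequality $\langle p_x,\dot\gamma(a)\rangle\leq G(a,\gamma(a),p_x)+L_G(a,\gamma(a),\dot\gamma(a))$, this forces equality in Fenchel. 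The strict convexity of $G$ in $p$ then singles out $p_x=p_x^\ast$, whence $p_t=p_t^\ast$; hence $D^+u(a,\gamma(a))\subseteq\{(p_t^\ast,p_x^\ast)\}$.

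The remaining task is to produce matching two-sided affine bounds $u(s,y)=u(a,\gamma(a))+p_t^\ast(s-a)+\langle p_x^\ast,y-\gamma(a)\rangle+o(|s-a|+|y-\gamma(a)|)$ near $(a,\gamma(a))$. For the upper estimate when $s>a$, I would apply Proposition \ref{L.O.t} on $[a,s]$ to a two-piece candidate curve that follows the tangent direction $\dot\gamma(a)$ on an initial portion of $[a,s]$ and then corrects affinely to reach $y$; a Taylor expansion of $L_G$ in its velocity argument around $\dot\gamma(a)$ (using the $\D{C}^1$ regularity of $L_G$ which comes from (H4)--(H5)) together with the Fenchel equality $L_G(a,\gamma(a),\dot\gamma(a))=\langle p_x^\ast,\dot\gamma(a)\rangle-G(a,\gamma(a),p_x^\ast)$ yields the required bound. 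For the lower estimate, and in particular the case $s<a$, one applies Proposition \ref{L.O.t} on an interval $[s,a+h]$ containing $a$ in its interior, to a candidate curve joining $(s,y)$ to $(a+h,\gamma(a+h))$ which passes close to $(a,\gamma(a))$, and then subtracts the calibration identity so as to cancel the $u(a+h,\gamma(a+h))$ contribution.

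The main technical obstacle is the treatment of approach from $s<a$: the forward Lax--Oleinik formula from initial time $a$ yields no direct bound on $u(s,y)$ for $s<a$, so the matching estimate must pass through the shifted interval $[s,a+h]$ together with a delicate choice of candidate curve whose $L_G$--action, after subtraction of the calibration, cancels the leading-order terms modulo $o(|s-a|+|y-\gamma(a)|)$; the key technical tools for this cancellation are the $\D{C}^1$ regularity of $L_G$ in its velocity variable and the Fenchel equality at $(\dot\gamma(a),p_x^\ast)$. Once the two-sided bound is secured, differentiability and the explicit derivative formulas $\partial_x u(a,\gamma(a))=p_x^\ast$, $\partial_t u(a,\gamma(a))=p_t^\ast$ follow at once from the analysis of the superdifferential performed above.
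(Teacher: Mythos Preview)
Your analysis of the superdifferential is correct and coincides with the paper's argument: from the calibration identity, the Lebesgue-point hypothesis, and the viscosity subsolution inequality you obtain equality in the Fenchel inequality, and strict convexity of $G(t,x,\cdot)$ forces any $(p_t,p_x)\in D^+u(a,\gamma(a))$ to equal $(p_t^\ast,p_x^\ast)$.

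Where you diverge from the paper is in the second half. You propose to manufacture the missing lower bound (and the case $s<a$) by hand, via competitor curves on shifted intervals and a first-order expansion of $L_G$ in $q$. The paper bypasses this entirely: under the stated hypotheses (convexity and local Lipschitz dependence in $x$), the value function $u$ is \emph{locally semiconcave} on $(0,T)\times\R^N$ (this is \cite[Theorem~5.3.8]{CaSi00}). For a semiconcave function the superdifferential is always nonempty, and if it is a singleton the function is differentiable there. So once you have shown $D^+u(a,\gamma(a))\subseteq\{(p_t^\ast,p_x^\ast)\}$, semiconcavity finishes the proof in one line; the explicit formulas follow from the Fenchel equality.

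Your direct route has two problems beyond being longer. First, you invoke ``$\D C^1$ regularity of $L_G$ which comes from (H4)--(H5)'', but Proposition~\ref{diffLO} is stated for a general $G$ and does not assume (H5); strict convexity of $G(t,x,\cdot)$ gives differentiability of $L_G$ in $q$, but not continuity of $\partial_q L_G$, so the Taylor expansion you need is not available from the hypotheses. Second, the $s<a$ estimate you flag as the ``main technical obstacle'' is genuinely delicate by your method---the forward representation formula gives no direct control there---whereas semiconcavity handles both sides simultaneously at no extra cost. The missing idea is precisely this appeal to semiconcavity.
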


\begin{proof}
Under the above hypotheses, the function $u$ is locally semiconcave on $(0,+\infty ) \times \R^N$ (see \cite[Theorem 5.3.8]{CaSi00})  and $\gamma $ is Lipschitz. The proof is borrowed from \cite[Theorem 6.4.7]{CaSi00}. Let $(p_t,p_x) $ be a superdifferential to $u$ at $\big(a, \gamma (a)\big)$. By definition of (viscosity) solutions, $p_t +G(a,\gamma (a), p_x)\leqslant 0$. We will prove that equality holds. As for $t-a>h>0$, we have $u\big(a+h, \gamma (a+h)\big) - u\big(a,\gamma (a)\big) = \int_a^{a+h} L_G\big(s,\gamma (s), \dot \gamma (s) \big) \dd s$, since $a$ is a Lebesgue point of $s\mapsto L_G\big(s,\gamma (s),\dot\gamma (s)\big)$, it follows that
$$ \lim_{h\to 0^+}\frac{ u\big(a+h, \gamma (a+h)\big) - u\big(a,\gamma (a)\big)   }{h} = L_G\big(a,\gamma (a),\dot\gamma (a)\big).
$$
On the other hand, by properties of superdifferentials,
$$ \lim_{h\to 0^+}\frac{ u\big(a+h, \gamma (a+h)\big) - u\big(a,\gamma (a)\big)   }{h} \leqslant p_t + \langle p_x,\dot\gamma (a)\rangle.
$$
It follows from the Fenchel inequality that $p_t + G(a, \gamma (a),p_x) \geqslant 0$ hence the claimed equality holds.

Finally, as the superdifferential is convex and $G$ is strictly convex in the last argument, $D^+u \big(a,\gamma (a)\big)$ cannot contain more than one element. It is moreover not empty by properties of semiconcave functions, hence  $D^+u \big(a,\gamma (a)\big)$ is reduced to a singleton and $u$ is differentiable at $\big(a,\gamma (a)\big)$. Moreover, as $p_x$ realize the equality in the Fenchel inequality it follows that
$p_x = \partial_q L_G\big(a,\gamma (a),\dot\gamma (a)\big)$ and $p_t=-G\big(a,\gamma (a),D_x u\big(a,\gamma (a)\big)\big)$.

\end{proof}

We  finally state a consequence of the previous results and of Theorem 18.1 in \cite{Cl13}:
\begin{cor}\label{corrrr}
Let us assume that the hypotheses of Proposition \ref{diffLO} are in force, and furthermore that $G(t,x,\cdot)$ is of class $\D{C}^1$, for every fixed $(t,x)\in [0,T]\times\R^N$. Then the curve $\gamma $ is $\D{C}^1$ and $u$ is differentiable at $\big(s,\gamma (s)\big)$ for every $s\in [a,t)$, with
\begin{eqnarray*}
\quad \partial_x u(s,\gamma (s)\big)=\partial_q L_G\big(s,\gamma (s),\dot\gamma (s)\big),\qquad
\partial_t u(s,\gamma (s)\big)=-G\big(s,\gamma (s),D_x u\big(s,\gamma (s)\big)\big).
\end{eqnarray*}
\end{cor}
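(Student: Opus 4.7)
The plan is to combine Theorem 18.1 of \cite{Cl13} with the Legendre duality afforded by the strict convexity and $\D C^1$ regularity of $G$ in the momentum variable, and then to apply Proposition \ref{diffLO} at every interior point.

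First, I would invoke Theorem 18.1 of \cite{Cl13} for the minimization problem solved by $\gamma $ on $[a,t]$. This yields that the map
\[ s \mapsto \partial_q L_G\big(s,\gamma (s),\dot\gamma (s)\big), \]
which a priori is defined only for a.e.\ $s\in [a,t]$, admits an absolutely continuous extension $p:[a,t]\to \R^N$ on the whole interval (this is the adjoint/momentum curve).

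Next, I would exploit the regularity and strict convexity hypotheses on $G$ to invert the Legendre transform. Under these assumptions, $L_G(s,x,\cdot)$ is strictly convex and $\D C^1$, and the map $(x,q)\mapsto \big(x,\partial_q L_G(s,x,q)\big)$ is a homeomorphism of $\R^N\times \R^N$ onto itself with continuous inverse $(x,p)\mapsto \big(x,\partial_p G(s,x,p)\big)$. Inverting the identity $p(s)=\partial_q L_G\big(s,\gamma (s),\dot\gamma (s)\big)$ therefore gives
\[ \dot\gamma (s)=\partial_p G\big(s,\gamma (s),p(s)\big)\qquad\hbox{for a.e.\ $s\in[a,t]$.}\]
The right-hand side depends continuously on $s$, since $\gamma $ and $p$ are continuous and $\partial_p G$ is continuous in all its arguments. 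Hence $\dot\gamma $ admits a continuous representative on $[a,t]$, and $\gamma $ is of class $\D C^1$ on the full interval.

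Finally, I would fix an arbitrary $s\in [a,t)$ and verify the hypotheses of Proposition \ref{diffLO} at $\big(s,\gamma (s)\big)$. By the dynamic programming principle implicit in \eqref{eq Lax-Oleinik G} (obtained by splitting the minimization over $[a,t]$ into $[a,s]\cup [s,t]$ and using the value $u\big(s,\gamma (s)\big)$ as new initial cost), the restriction $\gamma |_{[s,t]}$ is itself a minimizer of the Lax--Oleinik problem for $u\big(t,\gamma (t)\big)$ based at $\big(s,\gamma (s)\big)$. The $\D C^1$ regularity of $\gamma $ makes every $s\in [a,t)$ a differentiability point of $\gamma $, while the continuity of $(\sigma ,x,q)\mapsto L_G(\sigma ,x,q)$ and of $(\gamma ,\dot\gamma )$ makes every such $s$ a Lebesgue point of $\sigma \mapsto L_G\big(\sigma ,\gamma (\sigma ),\dot\gamma (\sigma )\big)$. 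Proposition \ref{diffLO} applied at $s$ (with $a$ replaced by $s$) then delivers the differentiability of $u$ at $\big(s,\gamma (s)\big)$ together with the two identities $\partial_x u\big(s,\gamma (s)\big)=\partial_q L_G\big(s,\gamma (s),\dot\gamma (s)\big)$ and $\partial_t u\big(s,\gamma (s)\big)=-G\big(s,\gamma (s),D_x u(s,\gamma (s))\big)$.

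The only delicate point is the invocation of Theorem 18.1 of \cite{Cl13}: one must check that its hypotheses (superlinearity, convexity, enough regularity in $x$) apply to the variational problem at hand, which is however encoded in the standing assumptions on $G$ (Lipschitz continuity in $x$, strict convexity and $\D C^1$ regularity in $p$, together with the superlinear bounds inherited from those on $G$). Once this is granted, the remainder of the argument is a straightforward consequence of Legendre duality and the pointwise statement of Proposition \ref{diffLO}.
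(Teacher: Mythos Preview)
Your proposal is correct and follows essentially the same approach as the paper, which states the corollary as ``a consequence of the previous results and of Theorem 18.1 in \cite{Cl13}'' without giving further details. The ingredients you identify---Theorem 18.1 for the absolute continuity of the adjoint curve, Legendre inversion via the $\D C^1$/strict convexity assumptions to upgrade $\gamma$ to $\D C^1$, the dynamic programming principle to recognize $\gamma|_{[s,t]}$ as a minimizer, and then Proposition \ref{diffLO} applied at each $s$---are exactly what the paper has in mind, as confirmed by the parallel argument carried out in the proof of Theorem \ref{principal}.
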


\end{section}

\end{appendix}

\bibliography{weakly}
\bibliographystyle{siam}

\end{document}